\newcommand{\E}{\mathbb{E}}
\newcommand{\R}{\mathbb{R}}
\newcommand{\N}{\mathbb{N}}
\newtheorem{theo}{Theorem}[section]
\newtheorem{rem}[theo]{Remark}
\newtheorem{propo}[theo]{Proposition}
\newtheorem{lemma}[theo]{Lemma}
\newtheorem{defi}[theo]{Definition}
\newtheorem{hyp}[theo]{Assumption}
\newcommand{\Ly}{\mathbf{L}}
\newcommand{\Xb}{\overline{X}}
\newcommand{\Fb}{\overline{F}}
\newcommand{\ub}{\overline{u}}
\newcommand{\Lb}{\overline{\mathcal{L}}}
\newcommand{\gamy}{\gamma_{\rm max}}
\newcommand{\aly}{\alpha_{\rm max}}
\newcommand{\ir}{4}
\begin{document}

\title{Orders of convergence in the averaging principle for SPDE{\tiny s}: the case of a stochastically forced slow component}

\author{Charles-Edouard Br\'ehier}
\address{Univ Lyon, CNRS, Universit\'e Claude Bernard Lyon 1, UMR5208, Institut Camille Jordan, F-69622 Villeurbanne, France}
\email{brehier@math.univ-lyon1.fr}

\keywords{Stochastic Partial Differential Equations, Averaging Principle, Poisson equation in infinite dimension, Heterogeneous Multiscale Method, strong and weak error estimates}

\date{}

\begin{abstract}
This article is devoted to the analysis of semilinear, parabolic, Stochastic Partial Differential Equations, with slow and fast time scales. Asymptotically, an averaging principle holds: the slow component converges to the solution of another semilinear, parabolic, SPDE, where the nonlinearity is averaged with respect to the invariant distribution of the fast process.

We exhibit orders of convergence, in both strong and weak senses, in two relevant situations, depending on the spatial regularity of the fast process and on the covariance of the Wiener noise in the slow equation. In a very regular case, strong and weak orders are equal to $\frac12$ and $1$. In a less regular case, the weak order is also twice the strong order.

This study extends previous results concerning weak rates of convergence, where either no stochastic forcing term was included in the slow equation, or the covariance of the noise was extremely regular.

An efficient numerical scheme, based on Heterogeneous Multiscale Methods, is briefly discussed.
\end{abstract}

\maketitle

\section{Introduction}

Systems with multiple time scales, and possibly stochastic forcing terms, appear in all fields of modern science, at fundamental and applied levels, for instance in physics, chemistry, biology, engineering, etc... Understanding how properties at micro-scales transfer to macro-scales, and the design of efficient numerical schemes, are still challenging issues. In the mathematical literature, powerful limiting procedures have been developed, e.g. averaging and homogenization techniques are available. We refer for instance to~\cite{FouqueGarnierPapanicolaouSolna,FreidlinWentzell,Kuehn,PavliotisStuart} for monographs devoted to the study of multiscale stochastic systems.

The averaging principle can be interpreted as a law of large numbers, in cases where a slow component is driven by an equation with coefficients depending on a fast component, which is an ergodic stochastic process: when the separation of time scales goes to infinity, the slow component converges to the solution of an averaged equation, where coefficient have been averaged out with respect to some invariant probability distribution for the fast component. For results concerning the averaging principle for Stochastic Differential Equations (SDEs), we refer to the pioneering work~\cite{Khasminskii:1968}, and to the following extensions since then, e.g.~\cite{KhasminskiiYin:2004},~\cite{KhasminskiiYin:2005},~\cite{Liu:2010},~\cite{Veretennikov:1990} (this list is not exhaustive).

In this article, the aim is to study the averaging principle for a class of parabolic, semilinear, Stochastic Partial Differential Equations (SPDEs)
\begin{equation*}
\begin{aligned}
\frac{\partial x^\epsilon(t,\xi)}{\partial t}&=\Delta x^\epsilon(t,\xi)+f\bigl(x^\epsilon(t,\xi),y^\epsilon(t,xi)\bigr)+\dot{W}(t,\xi),\quad t\ge 0,~\xi\in\mathcal{D},\\
x^{\epsilon}(t,\cdot)_{|\partial \mathcal{D}}&=0,~t\ge 0,\\
x^{\epsilon}(0,\cdot)&=x_0,~y^{\epsilon}(0,\cdot)=y_0,
\end{aligned}
\end{equation*}
where the domain is $\mathcal{D}=(0,1)^d$, for some $d\in\left\{1,2,3\right\}$, and $\dot{W}$ is a Gaussian noise which is white in time, and white or correlated in space. For a mathematically precise formulation, see~\eqref{eq:SPDE}, in the framework of~\cite{DPZ}, and Section~\ref{sec:setting}. The fast component is given by $y^\epsilon(t,\cdot)=y(\epsilon^{-1}t,\cdot)$, and is assumed to be an ergodic process, independent of the Wiener process $W$ which appears in the slow equation, {\it i.e.} the equation for the slow component $x^\epsilon$.

The averaging principle for such SPDE systems has been proved in a very general framework in \cite{Cerrai:2009}, \cite{CerraiFreidlin:2009} for globally Lipschitz coefficients, and later in~\cite{Cerrai:2011} in the case of non-globally Lipschitz continuous coefficients. In these results, no order of convergence, in terms of $\epsilon\to 0$, is provided. The first study of orders of convergence  for the averaging principle in the SPDE case, was performed by the author in~\cite{B:2012}. The main motivation for studying the rates of convergence is the construction of efficient numerical schemes, based on the Heterogeneous Multiscale Methods, see~\cite{B:2013} and references therein. 

In recent years, many works have been devoted to the study of the averaging principle for different classes of SPDEs. For instance, see~\cite{dong2018averaging,FuDuan:11,FuLiu:11,fu2018weak,FuWanWangLiu:14,LiSunXieZhao:18}, in the parabolic SPDE case. See~\cite{FuWanLiu:15,FuWanLiuLiu:18}, for some parabolic-hyperbolic systems. See~\cite{Gao:18,GaoLi:17} in the Schr\"odinger equations case. Finally, see~\cite{BouchetNardiniTangarife:13,BouchetNardiniTangarife:14}, where stochastic fluid mechanics equations are considered, with motivations coming from physics.

As is usual when dealing with stochastic equations, orders of convergence are understood in two senses. First, strong convergence deals with the mean-square error. Second, weak convergence is related to convergence in distribution, considering sufficiently smooth test functions. If the averaging principle for SDEs (with globally Lipschitz continuous coefficients) is considered, the strong order of convergence is $\frac12$, whereas the weak order is $1$, and these results are optimal in general. The technique of~\cite{Khasminskii:1968} is perfectly suited to prove strong convergence results, whereas to study weak convergence, approaches based on asymptotic expansions of solutions of Kolmogorov equations are very efficient, see~\cite{KhasminskiiYin:2004,KhasminskiiYin:2005}. The generalization to SPDEs, where there is no Wiener noise in the slow equation, has been considered in~\cite{B:2012}. If a stochastic forcing term is present, much less is known. Indeed, for SPDEs, {\it i.e.} for infinite dimensional stochastic equations, the analysis of the order of weak convergence and of the Kolmogorov equations, is notoriously challenging, we refer for instance to~\cite{BD} for a recent contribution, and the discussions and references therein.

The aim of this manuscript is to study the weak order of convergence in the averaging principle, for semilinear, parabolic, SPDEs, with a stochastic forcing in the slow equation. Note that this question has been recently investigated in~\cite{fu2018weak} with a very strong regularity condition on the covariance of the noise, which implies a high spatial regularity of the process $x^\epsilon$. In fact, using this condition, the techniques of~\cite{B:2012} may be applied, essentially with no modification, hence weak order of convergence equal to $1$ in~\cite{fu2018weak}. The objective of this manuscript is to obtain similar results with weaker regularity conditions. The main finding is that a trade-off between regularity properties of the slow and the fast components is at play. First, we prove that the strong (resp. weak) order of convergence is $1/2$ (resp. $1$), under an appropriate condition (the {\it very regular case}), which is in general much weaker than the assumption in~\cite{fu2018weak}. Second, we weaken the condition (the {\it less regular case}), and exhibit appropriate strong and weak orders of convergence depending on the regularity properties of the slow and fast component. In that case, as expected, the weak order is twice the strong order, however, whether these results are optimal is not known, indeed the proof is based on an approximation argument and may not be optimal. For statements of the main results, see Theorems~\ref{th:strong_regular},~\ref{th:weak_regular} and~\ref{th:general}.


Even if the main motivation of this work is the analysis of the weak order of convergence, a detailed analysis of the strong order of convergence is also provided, for two reasons. First, it allows us to check that the weak order is twice the strong order, as expected. Second, the technique of proof is different from the one used in previous publications on the strong convergence in the averaging principle for SPDEs, such as~\cite{B:2012} instead of employing the technique introduced by Khasminskii in~\cite{Khasminskii:1968}, the Poisson equation technique described for instance in~\cite{PavliotisStuart} is generalized to a situation where mild solutions of SPDEs are considered.

Several relevant questions are left for future works. For instance, in this manuscript, it is assumed that the fast component $y^\epsilon$ is not coupled with the slow component $x^\epsilon$, and it would be interesting to study the coupled case.


This article is organized as follows. Section~\ref{sec:setting} is devoted the introduction of the functional analysis framework and to stating precise assumptions. Regularity parameters $\aly$ and $\gamy$, which are used to define the very regular and less regular cases, are introduced in Assumptions~\ref{ass:Q} and~\ref{ass:Y} respectively. Section~\ref{sec:averaging} presents the averaged equation.

The main results of this article are stated and discussed in Section~\ref{sec:main}. First, in the very regular case, see Assumption~\ref{ass:regular} and Section~\ref{sec:main_regular}, Theorems~\ref{th:strong_regular} and~\ref{th:weak_regular} state that the strong (resp. weak) order is equal to $\frac12$ (resp. $1$). Second, in the less regular case, see Assumption~\ref{ass:general} and Section~\ref{sec:main_general}, Theorem~\ref{th:general} show that the strong order is (at least) $\beta_{\rm max}=\frac{\aly}{1+\aly-\gamy}\le \frac12$ and the weak order is (at least) $2\beta_{\rm max}$.

Auxiliary but fundamental and nontrivial regularity results concerning a family of Poisson equations are studied in Section~\ref{sec:Poisson}.

Proofs of the main results are provided in Sections~\ref{sec:proof_strong_regular},~\ref{sec:proof_weak_regular} and~\ref{sec:proof_general}.

Finally, in Section~\ref{sec:hmm}, an application of the main result is presented, for the construction and analysis of efficient numerical schemes, based on Heterogeneous Multiscale Methods.

\section{Setting}\label{sec:setting}

The objective of this section is to state precise assumptions, and to derive moment estimates (uniform in $\epsilon$), for the following Stochastic Evolution Equation
\begin{equation}\label{eq:SPDE}
dX^\epsilon(t)=AX^\epsilon(t)dt+F\bigl(X^\epsilon(t),Y^\epsilon(t)\bigr)dt+dW^Q(t).
\end{equation}
This is the abstract formulation of a parabolic, semilinear, SPDE, in the framework of~\cite{DPZ}. The stochastic forcing is given by a $Q$-Wiener process $W^Q$. In addition, $Y^\epsilon$ is another stochastic process with values in $L^2$. In this work, it is assumed that $Y^\epsilon$ and $W^Q$ are independent.

We are interested in the regime of a small parameter $\epsilon$, and of a timescale separation: $Y^\epsilon(t)=Y(t\epsilon^{-1})$. As a consequence, $X^\epsilon$ is referred to as the slow component, and $Y^\epsilon$ as the fast component.

For instance, the process $Y$ may be the solution of an equation of the type
\[
dY(t)=AY(t)dt+G(Y(t))dt+dw^q(t),
\]
where $\bigl(w^q(t)\bigr)_{t\ge 0}$ is a $q$-Wiener process, independent of $W^Q$. Then $Y^\epsilon$ solves an equation of the type
\[
dY^\epsilon(t)=\frac{1}{\epsilon}\bigl(AY^\epsilon(t)+G(Y^\epsilon(t))\bigr)dt+\frac{1}{\sqrt{\epsilon}}dw^q(t),
\]
in which case one has the equality in distribution (but not almost surely) of the processes $\bigl(Y(t\epsilon^{-1})\bigr)_{t\ge 0}$ and $\bigl(Y^\epsilon(t)\bigr)_{t\ge 0}$. The assumption that $Y$ is independent of $W^Q$ means that $G$ does not depend on the slow component, thus the fast evolution is not coupled with the slow evolution.

Considering the coupled situation, where $G$ depends also on the slow component, would substantially modify some computations below. However, the treatment of the uncoupled case, considered in this manuscript, already requires the use of original and nontrivial arguments. The objective of this manuscript is to exhibit these arguments, in the simplest nontrivial framework. The treatment of the coupled case is left for future work.

\subsection{Notation}

Let $\mathcal{D}=(0,1)^d$, with dimension $d\in\left\{1,2,3\right\}$, denote a domain. For any $p\in[2,\infty]$, let $L^p=L^p(\mathcal{D})$, and denote by $|\cdot|_{L^p}$ the associated $L^p$-norm. When $p=2$, $H=L^2$ is a separable, infinite dimensional, Hilbert space, with norm $|\cdot|_H=|\cdot|_{L^2}$, and inner product denoted by $\langle\cdot,\cdot\rangle$.

For any $p,q\in[2,\infty)$, let $\mathcal{L}(L^p,L^q)$ denote the space of bounded linear operators from $L^p$ to $L^q$. The associated norm is denoted by $\|\cdot\|_{\mathcal{L}(L^p,L^q)}$.

For $p\in[2,\infty)$, let $\mathcal{R}(L^2,L^p)\subset \mathcal{L}(L^2,L^p)$ denote the space of $\gamma$-Radonifying  operators from $L^2$ to $L^p$. Recall that a linear operator $\Psi\in\mathcal{L}(L^2,L^p)$ is a $\gamma$-radonifying operator, if the image by $\Psi$ of the canonical gaussian distribution on $L^2$ extends to a Borel probability measure on $L^p$. The space $\mathcal{R}(L^2,L^p)$ is equipped with the norm $\|\cdot\|_{\mathcal{R}(L^2,L^p)}$ defined by
$$
\|\Psi\|_{\mathcal{R}(L^2,L^p)}^2=\tilde{\E}\big| \sum_{n\in\N} \gamma_n \Psi f_n\big|^2,
$$
where $(\gamma_n)_{n\in \N}$ is any sequence of independent standard (mean $0$ and variance $1$) Gaussian random variables, defined on a probability space $(\tilde\Omega,\tilde{\mathcal F}, \tilde{ \mathbb P})$, with expectation operator denoted by $\tilde{\E}$, and $(f_n)_{n\in \N}$ is any complete orthonormal system of $L^2$. When $p=2$, $\mathcal{R}(L^2,L^2)=\mathcal{L}_2(L^2)$ is the space of Hilbert-Schmidt operators on $L^2$, and $\|\Psi\|_{\mathcal{R}(L^2,L^2)}^2={\rm Tr}(\Psi\Psi^\star)$, where ${\rm Tr}(\cdot)$ is the trace operator, and $\Psi^\star$ is the adjoint of $\Psi$.

Note that, for any $p\in[2,\infty)$, there exists $c_p\in(0,\infty)$ such that for any $\Psi\in\mathcal{R}(L^2,L^p)$,
\[
\|\Psi\|_{\mathcal{R}(L^2,L^p)}^2\le c_p\big|\sum_{n\in\N}(\Psi f_n)^2\big|_{L^{\frac{p}{2}}}.
\]

Finally, recall the left and right ideal property for $\gamma$-Radonifying operators: for all $p,q\in[2,\infty)$, for all operators $L_1\in \mathcal{L}(L^p,L^q)$, $\Psi\in \mathcal{R}(L^2,L^p)$ and $L_2\in\mathcal{L}(L^2,L^2)$, then $L_1\Psi L_2\in \mathcal{R}(L^2,L^q)$, and
\[
\|L_1\Psi L_2\|_{\mathcal{R}(L^2,L^q)}\le \|L_1\|_{\mathcal{L}(L^p,L^q)} \|\Psi\|_{\mathcal{R}(L^2,L^p)} \|L_2\|_{\mathcal{L}(L^2,L^2)}.
\]

Let $\bigl(W(t)\bigr)_{t\ge 0}$ denote a cylindrical Wiener process defined on $L^2$, on a probability space $(\Omega,\mathcal{F},\mathbb{P})$. For any $T\in(0,\infty)$ and $p\in[2,\infty)$, the $L^p$-valued It\^o integral $\int_{0}^{T}\Phi(t)dW^Q(t)$ is defined for predictable processes $\Phi\in L^2(\Omega\times (0,T),\mathcal{R}(L^2,L^p))$. Moreover, there exists $c_p\in(0,\infty)$, such that 
\[
\E\bigl[\|\int_0^T \Phi(t)dW(t)\|_{L^p}^2\bigr]\le c_p \int_0^T\E\|\Phi(t)\|_{\mathcal{R}(L^2,L^p)}^2 dt.
\]
In the case $p=2$, the inequality above is replaced by the following version of It\^o isometry property:
\[
\E\bigl[\|\int_0^T \Phi(t)dW(t)\|_{L^2}^2\bigr]=\int_0^T\E\|\Phi(t)\|_{\mathcal{R}(L^2,L^2)}^2 dt.
\]
Higher order moments of stochastic integrals are estimated using Burkholder-Davis-Gundy type inequalities.

For statements, proofs, and generalizations, of the results above, we refer for instance to~\cite{Brzezniak:97,VanNeerven_Veraar_Weis:07,VanNeerven_Veraar_Weis:08} for Banach space valued stochastic integrals, and to~\cite{DPZ} for the Hilbert space case.

If $\varphi:L^2\to\R$ is a function of class $\mathcal{C}^1$, its first order derivative $D\varphi(x)\in\mathcal{L}(L^2,\R)$ may be identified with a element of $L^2$, thanks to Riesz Theorem: as a consequence, for all $x,h\in L^2$, we write $D\varphi(x).h=\langle D\varphi(x),h\rangle$.

\subsection{The linear operator $A$}

Let $A$ denote the unbounded linear operator on $H=L^2$, with
\[
\begin{cases}
D(A)=H^2(0,1)\cap H_0^1(0,1),\\
Ax=\Delta x,~\forall~x\in D(A),
\end{cases}
\]
where $\Delta$ is the Laplace differential operator in dimension $d$. The domain is chosen in order to consider homogeneous Dirichlet boundary conditions in evolution equations. It is a standard result (see for instance~\cite{Pazy}) that there exists a complete orthonormal system $\bigl(e_n\bigr)_{n\in\N}$ of $L^2$, and a non-decreasing sequence $\bigl(\lambda_n\bigr)_{n\in\N}$ of positive real numbers such that
\[
Ae_n=-\lambda_ne_n,~\forall~n\in\N~,\quad \lambda_n\underset{n\to\infty}\sim c_dn^{\frac2d}\quad,\quad \underset{p\in\N}\sup~\underset{n\in\N}\sup~|e_n|_{L^p}<\infty.
\]

The operator $A$ can also be considered as an unbounded linear operator on $L^p$, for all $p\in[2,\infty)$, in a consistent way as $p$ varies. The linear operator $A$ generates an analytic semi-group $\bigl(e^{tA}\bigr)_{t\ge 0}$, on $L^p$ for $p\in[2,\infty)$. For $\alpha\in(0,1)$, the linear operators $(-A)^{-\alpha}$ and $(-A)^{\alpha}$ are constructed in a standard way, see for instance~\cite{Pazy}:
\begin{gather*}
(-A)^{-\alpha}=\frac{\sin(\pi \alpha)}{\pi}\int_{0}^{\infty}t^{-\alpha}(tI-A)^{-1}dt~,~(-A)^{\alpha}=\frac{\sin(\pi \alpha)}{\pi}\int_{0}^{\infty}t^{\alpha-1}(-A)(tI-A)^{-1}dt,
\end{gather*}
where $(-A)^{\alpha}$ is defined as an unbounded linear operator on $L^p$. In the case $p=2$, note that
\begin{gather*}
(-A)^{-\alpha}x=\sum_{i\in \N^{\star}}\lambda_i^{-\alpha} \langle x,e_i\rangle e_i, \quad x\in H,\\
(-A)^{\alpha}x=\sum_{i\in \N^{\star}}\lambda_i^\alpha \langle x,e_i\rangle e_i, \quad x\in D_2\bigl((-A)^{\alpha}\bigr)=\left\{x\in H ; \sum_{i=1}^{\infty}\lambda_{i}^{2\alpha}\langle x,e_i\rangle^2<\infty\right\}.
\end{gather*}

Introduce also the kernel function $K$ associated with the semigroup $\bigl(e^{tA}\bigr)_{t\ge 0}$:
\[
e^{tA}\varphi(\xi)=\int_{\mathcal{D}}K(t,\xi,\eta)\varphi(\eta)d\eta.
\]
This kernel satisfies the two following properties:
\[
0\le K(t,\xi,\eta)\le Ct^{-\frac{d}{2}}\exp\bigl(-ct^{-1}|\xi-\eta|^2\bigr)~,\quad \int_{\mathcal{D}}K(t,\xi,\cdot)\le 1.
\]

To conclude this section, we state useful calculus inequalities, which are employed in a crucial way in Section~\ref{sec:Poisson}.
\begin{propo}\label{propo:inequalities}
For any $\alpha\in[0,\frac12)$, any $\kappa\in(0,\frac12-\alpha)$, and any $p\in[2,\infty)$, there exists $C_{\alpha,\kappa,p}\in(0,\infty)$, such that for all $x_1,x_2$,
\[
|(-A)^\alpha(x_1x_2)|_{L^p}\le C_{\alpha,\kappa,p}|(-A)^{\alpha+\kappa}x_1|_{L^{2p}}|(-A)^{\alpha+\kappa}x_2|_{L^{2p}}.
\]

Moreover, let $\phi:(z_1,z_2)\in \mathbb{R}\times\mathbb{R}\mapsto \phi(z_1,z_2)\in \mathbb{R}$ be a Lipschitz continuous function of class $\mathcal{C}^1$. Then there exists $C(\phi)\in(0,\infty)$ such that for all $x,y_1,y_2$,
\[
|(-A)^{\alpha}\bigl(\phi(x,y_2)-\phi(x,y_1)\bigr)|_{L^p}\le C_{\alpha,\kappa,p}(1+|(-A)^{\alpha+\kappa}x|_{L^{2p}}+\sum_{j=1,2}|(-A)^{\alpha+\kappa}y_j|_{L^{2p}}\bigr)\big|(-A)^{\alpha+\kappa}(y_2-y_1)\big|_{L^{2p}}.
\]
\end{propo}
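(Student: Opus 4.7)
The plan is to reduce both inequalities to bilinear estimates in Sobolev--Slobodeckii spaces. For $\alpha\in[0,\frac12)$ and $p\in[2,\infty)$, classical results on interpolation of domains of fractional powers (Seeley, Triebel) yield equivalence of $|(-A)^\alpha f|_{L^p}$ with the Sobolev--Slobodeckii norm on $\mathcal{D}$; the strict inequality $2\alpha<1$ is precisely what makes the Dirichlet boundary conditions in $D(A)$ invisible at this level of regularity. In particular, one gets the Gagliardo-type bound
\[
|(-A)^\alpha f|_{L^p}^p \le C\Bigl(|f|_{L^p}^p + \int_{\mathcal{D}\times\mathcal{D}} \frac{|f(\xi)-f(\eta)|^p}{|\xi-\eta|^{d+2\alpha p}}\,d\xi\,d\eta\Bigr),
\]
which serves as the workhorse for both parts. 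Conversely $|f|_{L^p}$ and $|(-A)^\alpha f|_{L^p}$ are controlled by $|(-A)^{\alpha+\kappa}f|_{L^p}$ using boundedness of $(-A)^{-\kappa}$ on $L^p$.

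For the first inequality, I use the Leibniz-type splitting
\[
x_1(\xi)x_2(\xi)-x_1(\eta)x_2(\eta)=\bigl(x_1(\xi)-x_1(\eta)\bigr)x_2(\xi)+x_1(\eta)\bigl(x_2(\xi)-x_2(\eta)\bigr),
\]
insert into the Gagliardo double integral, and apply Cauchy--Schwarz in $(\xi,\eta)$ by factoring the kernel as $|\xi-\eta|^{-(d+2\alpha p)}=|\xi-\eta|^{-(d+4(\alpha+\kappa)p)/2}\cdot|\xi-\eta|^{-(d-4\kappa p)/2}$. The first factor reassembles into a $W^{2(\alpha+\kappa),2p}$ Gagliardo integral for $x_1$, hence is bounded by $|(-A)^{\alpha+\kappa}x_1|_{L^{2p}}^p$ via the equivalence above; the second leaves an integral $\int\!\int|x_2(\xi)|^{2p}|\xi-\eta|^{-d+4\kappa p}d\xi d\eta$ whose convergence (with a bound by $C|x_2|_{L^{2p}}^{2p}$) is guaranteed precisely by $\kappa>0$ and the boundedness of $\mathcal{D}$. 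The symmetric term is handled identically, and the $L^p$-part of the norm of $x_1x_2$ is bounded by $|x_1|_{L^{2p}}|x_2|_{L^{2p}}$ directly from Hölder.

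For the second inequality, the fundamental theorem of calculus yields
\[
\phi(x,y_2)-\phi(x,y_1)=(y_2-y_1)\int_0^1 \partial_{z_2}\phi\bigl(x,y_1+s(y_2-y_1)\bigr)\,ds,
\]
so that the first inequality applied with $x_1=y_2-y_1$ and $x_2$ equal to the integral reduces matters to bounding $|(-A)^{\alpha+\kappa}\partial_{z_2}\phi(x,y_1+s(y_2-y_1))|_{L^{2p}}$ uniformly in $s\in[0,1]$. This is done by the Gagliardo representation together with the Lipschitz/$\mathcal{C}^1$ regularity of $\phi$: pointwise increments of the composed function are controlled by increments of $x$, $y_1$, and $y_2$, and the boundedness of $\partial_{z_2}\phi$ controls the $L^{2p}$ part, producing a bound of the form $C(\phi)\bigl(1+|(-A)^{\alpha+\kappa}x|_{L^{2p}}+\sum_{j=1,2}|(-A)^{\alpha+\kappa}y_j|_{L^{2p}}\bigr)$. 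The main technical obstacle is the careful bookkeeping of Hölder exponents in the Gagliardo double integrals; the role of $\kappa>0$ is precisely to provide the positive exponent of $|\xi-\eta|$ that makes the integrals converge, sidestepping any appeal to a black-box Kato--Ponce fractional Leibniz rule which would be delicate to localize on a bounded Dirichlet domain.
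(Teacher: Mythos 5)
Your route to the second inequality is exactly the paper's: the paper deduces it from the first inequality via the Taylor formula $\phi(x,y_2)-\phi(x,y_1)=\int_0^1\partial_{z_2}\phi\bigl(x,\lambda y_2+(1-\lambda)y_1\bigr)(y_2-y_1)\,d\lambda$, which is what you do. For the first inequality, however, the paper gives no proof at all --- it is quoted from Section~3.2 of the Br\'ehier--Debussche reference and from Triebel's multiplication theorems --- whereas you propose a self-contained argument: Leibniz splitting of the increment of $x_1x_2$, the kernel factorization $|\xi-\eta|^{-(d+2\alpha p)}=|\xi-\eta|^{-(d+4(\alpha+\kappa)p)/2}\,|\xi-\eta|^{-(d-4\kappa p)/2}$, and Cauchy--Schwarz, with $\kappa>0$ making the second kernel integrable on the bounded domain. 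That core computation is correct and is the standard elementary proof of such bilinear estimates in the Slobodeckii scale; what it buys is independence from the paraproduct machinery, at the price of having to justify yourself the passage between $|(-A)^{\alpha}\cdot|_{L^p}$ and Gagliardo norms.

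That passage is where your write-up has two concrete gaps. First, the claimed equivalence of $|(-A)^{\alpha}f|_{L^p}$ with the $W^{2\alpha,p}$ norm at the same smoothness index is false for $p>2$: $D_p((-A)^{\alpha})$ lives in the Bessel-potential scale $F^{2\alpha}_{p,2}$, the Gagliardo norm is the Besov norm $B^{2\alpha}_{p,p}$, and $B^{s}_{p,p}\hookrightarrow F^{s}_{p,2}$ fails for $p>2$; you must concede an arbitrarily small loss of smoothness ($B^{s+\varepsilon}_{p,p}\hookrightarrow B^{s}_{p,2}\hookrightarrow F^{s}_{p,2}$), which you can afford by splitting $\kappa$, but your ``workhorse'' display as written is not correct. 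Second, the assertion that $2\alpha<1$ renders the Dirichlet boundary condition invisible is wrong: the threshold is $2\alpha<1/p$ (already for $p=2$ the constant function $1$ fails to lie in $D((-A)^{\alpha})$ once $\alpha\ge\frac14$). For $\alpha\in[\frac{1}{2p},\frac12)$ you must check that $x_1x_2$, respectively $\phi(x,y_2)-\phi(x,y_1)$, has zero trace; this does hold here, since finiteness of the right-hand sides forces $x_1,x_2$ (resp.\ $y_1,y_2$) to vanish on $\partial\mathcal{D}$ whenever the trace is defined and hence the product (resp.\ the difference, since $\phi(x,0)-\phi(x,0)=0$) vanishes there too, but the argument has to be made. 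Finally, note that your reduction of the second inequality (like the paper's one-line version) actually needs $\partial_{z_2}\phi$ to be Lipschitz in order to control the Gagliardo seminorm of $\partial_{z_2}\phi(x,y_1+s(y_2-y_1))$; this exceeds the stated hypothesis ``$\phi$ Lipschitz of class $\mathcal{C}^1$'' but is satisfied in all of the paper's applications, where $\phi$ is built from $f\in\mathcal{C}^{4}$ with bounded derivatives.
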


For the first inequality, we refer to~\cite[Section~3.2]{BD} and~\cite{Triebel}. The second inequality is a straightforward consequence of the first inequality and of a first order Taylor formula:
\[
\phi(x,y_2)-\phi(x,y_2)=\int_{0}^{1}\partial_{z_2}\phi(x,\lambda y_2+(1-\lambda)y_1)(y_2-y_1)d\lambda.
\]

\subsection{Assumptions on $F$ and $Q$}

The coefficient $F$ in~\eqref{eq:SPDE} is defined as the Nemytskii operator (see Definition~\ref{defi:F}) associated with a smooth function $f$ (see Assumption~\ref{ass:f}).

\begin{hyp}\label{ass:f}
Assume that $f:(z_1,z_2)\in \mathbb{R}\times \mathbb{R}\mapsto f(z_1,z_2)\in \mathbb{R}$ is a function of class $\mathcal{C}^{\ir}$, with bounded derivatives of order $1,\ldots,\ir$.
\end{hyp}

\begin{rem}
In the calculations below, quantitative estimates will only depend on the bounds on the derivatives of $f$ of order $1,2,3$. Existence of the fourth order derivative is only employed to justify some calculations in Section~\ref{sec:Poisson}.
\end{rem}

\begin{defi}\label{defi:F}
For all $p,q\in[2,\infty)$, the mapping $F:L^p\times L^p\to L^{p\wedge q}$ is defined as the Nemytskii operator, with $F(x,y)=f\bigl(x(\cdot),y(\cdot)\bigr)$ for all $x\in L^p,y\in L^q$.
\end{defi}

Note that the definition of $F$ is consistent when parameters $p$ and $q$ vary.

Observe that, for any $p,q\in [2,\infty)$, and fixed $y\in L^q$, then the mapping $x\in L^p\mapsto F(x,y)\in L^{p\wedge q}$ is globally Lipschitz continuous, uniformly in $y\in L^q$, and in $p,q$. More precisely,
\[
{\rm Lip}\bigl(F(\cdot,y)\bigr)\le \underset{(z_1,z_2)\in\mathbb{R}^2}\sup~|\partial_{z_1}f(z_1,z_2)|. 
\]

\bigskip

The stochastic perturbation in the slow component of~\eqref{eq:SPDE} is given by a $Q$-Wiener process. The covariance operator $Q$ is a bounded, self-adjoint, operator on $L^2$, and satisfies Assumption~\ref{ass:Q} below.
\begin{hyp}\label{ass:Q}
There exists a family of nonnegative real numbers $\bigl(q_n\bigr)_{n\in\N}$ and a complete orthonormal system $\bigl(f_n\bigr)_{n\in\N}$ of $H$, such that $\underset{n\in\N}\sup~q_n<\infty$, and
\[
Q=\sum_{n\in\N} q_n \langle f_n,\cdot\rangle f_n.
\]
Let $Q^{\frac12}$ be defined as
\[
Q^{\frac12}=\sum_{n\in\N} \sqrt{q_n} \langle f_n,\cdot\rangle f_n.
\]
It is assumed that $f_n\in L^\infty$ for all $n\in \N$, and that
\[
\underset{n\in\N}\sup~|f_n|_{L^\infty}<\infty.
\]
Finally, assume that there exists $\aly\in(0,1]$ such that, for all $\alpha\in[0,\aly)$ and $p\in[2,\infty)$,
\begin{equation}\label{eq:assQ}
M_{\alpha,p}(Q^{\frac12},T)=\bigl(\int_{0}^{T}\|(-A)^{\alpha}e^{tA}Q^{\frac{1}{2}}\|_{\mathcal{R}(L^2,L^p)}^2dt\bigr)^{\frac12}<\infty.
\end{equation}
\end{hyp}

The $Q$-Wiener process $W^Q$ is then defined on a probability space $(\Omega,\mathcal{F},\mathbb{P})$, as follows:
\[
W^Q(t)=\sum_{n\in\N}\sqrt{q_n}\gamma_n f_n,
\]
where $\bigl(\gamma_n\bigr)_{n\in\N}$ is a sequence of independent standard Gaussian random variables. Note that $W^Q(t)=Q^{\frac12}W(t)$ where $W(t)=\sum_{n\in\N}\gamma_n f_n$ is a cylindrical Wiener process.

Note that, for all $T\in(0,\infty)$, $M_{\alpha,p}(Q^\frac12,T)<\infty$ if and only if $M_{\alpha,p}(Q^\frac12,1)<\infty$. Thus the condition expressed in~\eqref{eq:assQ} does not depend on the time $T$, it only depends on $Q$, and on the parameters $\alpha$ and $p$.

A sufficient condition for~\eqref{eq:assQ} to hold is the following: for all $\alpha\in[0,\aly)$ and $p\in[2,\infty)$, $\|(-A)^{\alpha-\frac12}Q^{\frac12}\|_{\mathcal{R}(L^2,L^p)}<\infty$. For another class of sufficient conditions, see Assumption~\ref{ass:general} and Proposition~\ref{propo:suffQ}.

\subsection{Assumptions on the fast process}

The fast process $\bigl(Y^\epsilon(t)\bigr)_{t\ge 0}$ in~\eqref{eq:SPDE} is defined in terms of an ergodic Markov process $Y$, such that, for all $t\ge 0$,
\[
Y^\epsilon(t)=Y(\frac{t}{\epsilon}).
\]

\begin{hyp}\label{ass:Y}
The process $Y=\bigl(Y(t)\bigr)_{t\ge 0}$ is a continuous, ergodic, Markov process on $H=L^2$. Its unique invariant probability distribution is denoted by $\mu$.

Moreover, it is assumed that $Y$ and the $Q$-Wiener process $W^Q$ are independent.

Finally, there exists a parameter $\gamy\in(0,\frac12]$, such that the following estimates are satisfied: for all $\gamma\in[0,\gamy)$, all $p\in[2,\infty)$, and $M\in\mathbb{N}$, there exists $C_{\gamma,p,M}\in(0,\infty)$ such that
\begin{align}
&\underset{t\ge 0}\sup~\E|(-A)^{\gamma}Y(t)|_{L^p}^M\le C_{\gamma,p,M}(1+\E|(-A)^{\gamma}Y(0)|_{L^p}^M),\label{eq:assY_1}\\
&\int |(-A)^\gamma y|_{L^p}^M \mu(dy)\le C_{\gamma,p,M},\label{eq:assY_2}
\end{align}
\end{hyp}

The following standard notation is used: $\bigl(Y_y(t)\bigr)_{t\ge 0}$ denotes the Markov process with initial condition $Y(0)=y$.

Another key assumption concerning the fast process deals with solvability of Poisson equations, and on regularity properties of the solutions.
\begin{hyp}\label{ass:Poisson}
Define admissible functions $\phi:H\to \mathbb{R}$, to be such that, for some $q\in[2,\infty)$, $\phi$ is twice Fr\'echet differentiable on $L^q$, and such that there exists $C\in(0,\infty)$ such that for all $y\in H$, $h,h_1,h_2\in L^q$,
\[
|D\phi(y).h|\le C|h|_{L^q}\quad,~|D^2\phi(y).(h_1,h_2)|\le C|h_1|_{L^q}|h_2|_{L^q}.
\]

Let $\Ly$ be the infinitesimal generator of the Markov process $Y$.

Assume that for any admissible function $\phi$, the Poisson equation
\begin{equation}\label{eq:assPoisson1}
-\Ly \psi=\phi-\int \phi d\mu
\end{equation}
admits a unique solution such that $\int \psi d\mu=0$, and that this solution is given by
\begin{equation}
\psi(y)=\int_0^\infty \E\bigl[\phi(Y_y(t))-\int\phi d\mu\bigr]dt.
\end{equation}

Moreover, for all $\gamma\in[0,\gamy)$, $p\in[2,\infty)$ and $M\in\N_0$, assume that there exists $C_{\gamma,p,M}\in(0,\infty)$ such that the following property is satisfied. Let $\phi:H\to \mathbb{R}$ be an admissible function, and assume that there exists $C(\phi)\in(0,\infty)$, such that for all $y_1,y_2\in H$
\begin{equation}\label{eq:assPoisson2ass}
|\phi(y_2)-\phi(y_1)|\le C(\phi)(1+|(-A)^{\gamma}y_1|_{L^p}^M+|(-A)^{\gamma}y_2|_{L^p}^M)|(-A)^\gamma(y_2-y_1)|_{L^p}.
\end{equation}
Then the solution $\psi$ of the Poisson equation~\eqref{eq:assPoisson1} satisfies, for all $y\in H$,
\begin{equation}\label{eq:assPoisson2}
|\psi(y)|\le C_{\gamma,p,M}C(\phi)(1+|(-A)^\gamma y|_{L^p}^{M+1}).
\end{equation}
\end{hyp}

A sufficient condition to have the estimate~\eqref{eq:assPoisson2} satisfied is given by Proposition~\ref{propo:Y}.
\begin{propo}\label{propo:Y}
Let Assumption~\ref{ass:Y} be satisfied. Assume that for all $\gamma\in[0,\gamy)$, $p\in[2,\infty)$ and $M\in\N_0$, there exists $C_{\gamma,p,M}\in(0,\infty)$ such that for all $y_1,y_2\in L^p$,
\[
\int_{0}^{\infty}\bigl(\E|(-A)^{\gamma}(Y_{y_2}(t)-Y_{y_1}(t))|_{L^p}^{M}\bigr)^{\frac1M}dt\le C_{\gamma,p,M}|(-A)^{\gamma}(y_2-y_1)|_{L^p}.
\]
Then the estimate~\eqref{eq:assPoisson2} is satisfied.
\end{propo}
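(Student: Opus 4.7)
The strategy is to exploit the explicit representation formula for $\psi$ together with the invariance of $\mu$ under the Markov semigroup generated by $Y$. Specifically, for every $t\ge 0$ one has $\int\phi\,d\mu=\int\E[\phi(Y_{y'}(t))]\,\mu(dy')$, so formally Fubini gives
\[
\psi(y)=\int\Bigl(\int_{0}^{\infty}\E\bigl[\phi(Y_y(t))-\phi(Y_{y'}(t))\bigr]\,dt\Bigr)\,\mu(dy').
\]
The point of this identity is that, for each fixed $y'$, the pointwise Lipschitz-like hypothesis~\eqref{eq:assPoisson2ass} can now be applied along the two trajectories $Y_y(\cdot)$ and $Y_{y'}(\cdot)$, instead of only at time $t=0$.

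After applying~\eqref{eq:assPoisson2ass} and taking expectations, I would invoke H\"older's inequality with conjugate exponents $\tfrac{M+1}{M}$ and $M+1$ in order to separate the polynomial moment factor
\[
A_t:=1+|(-A)^{\gamma}Y_y(t)|_{L^p}^{M}+|(-A)^{\gamma}Y_{y'}(t)|_{L^p}^{M}
\]
from the difference factor $B_t:=|(-A)^{\gamma}(Y_y(t)-Y_{y'}(t))|_{L^p}$. Using $A_t^{(M+1)/M}\le C(1+|(-A)^{\gamma}Y_y(t)|_{L^p}^{M+1}+|(-A)^{\gamma}Y_{y'}(t)|_{L^p}^{M+1})$, the uniform moment estimate~\eqref{eq:assY_1} of Assumption~\ref{ass:Y} controls $\bigl(\E A_t^{(M+1)/M}\bigr)^{M/(M+1)}$ by a constant times $(1+|(-A)^{\gamma}y|_{L^p}^{M}+|(-A)^{\gamma}y'|_{L^p}^{M})$, uniformly in $t$ (using subadditivity of $x\mapsto x^{M/(M+1)}$ on $[0,\infty)$). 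It then remains to integrate the factor $\bigl(\E B_t^{M+1}\bigr)^{1/(M+1)}$ in $t$, which is exactly the quantity controlled by the hypothesis of the proposition applied with $M$ replaced by $M+1$, yielding a bound by $C_{\gamma,p,M+1}|(-A)^{\gamma}(y-y')|_{L^p}$.

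Combining these two bounds and using the triangle inequality $|(-A)^{\gamma}(y-y')|_{L^p}\le|(-A)^{\gamma}y|_{L^p}+|(-A)^{\gamma}y'|_{L^p}$ produces a pointwise bound on the inner integral by
\[
C(\phi)\bigl(1+|(-A)^{\gamma}y|_{L^p}^{M+1}+|(-A)^{\gamma}y'|_{L^p}^{M+1}\bigr),
\]
after absorbing all cross terms via Young's inequality. Integrating over $\mu(dy')$, the $y'$-dependent term is then finite thanks to the invariant moment bound~\eqref{eq:assY_2}, and one recovers the desired estimate~\eqref{eq:assPoisson2}. The most delicate step is the Fubini interchange performed at the outset: it is made rigorous by first truncating the time integral to $[0,T]$, carrying out the estimates above (which are uniform in $T$), and passing to the limit $T\to\infty$ using the integrability in $t$ just established. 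Beyond this and the bookkeeping of the shift from $M$ to $M+1$ in the constants $C_{\gamma,p,M}$, no step presents a real obstacle.
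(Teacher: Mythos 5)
Your argument is correct and follows essentially the same route as the paper: represent $\psi(y)$ via invariance of $\mu$ as a double integral of $\E[\phi(Y_y(t))-\phi(Y_{y'}(t))]$, apply the Lipschitz-type hypothesis~\eqref{eq:assPoisson2ass} along the two trajectories, separate the moment factor from the difference factor by H\"older, and conclude with~\eqref{eq:assY_1}, the time-integrability hypothesis, and~\eqref{eq:assY_2}. The only (immaterial) difference is your choice of conjugate exponents $\bigl(\tfrac{M+1}{M},M+1\bigr)$ where the paper uses $(2,2)$ — both are admissible since the hypotheses hold for all integer moments — and your explicit truncation argument for the Fubini step, which the paper leaves implicit.
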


\begin{proof}[Proof of Proposition~\ref{propo:Y}]
By stationnarity, $\psi$ is written as follows:
\[
\psi(y)=\int \int_{0}^{\infty}\E[\phi(Y^y(t))-\phi(Y^z(t))]dt\mu(dz).
\]
Then, using~\eqref{eq:assPoisson2ass} and Assumption~\ref{ass:Y},
\begin{align*}
|\psi(y)|&\le C(\phi)\int \int_{0}^{\infty}\bigl(\E|(-A)^{\gamma}(Y^y(t)-Y^z(t)|_{L^p}^{2}\bigr)^{\frac12}\bigl(1+(\E|(-A)^\gamma Y^y(t)|_{L^p}^{2M})^{\frac12}+(\E|(-A)^{\gamma}Y^z(t)|_{L^p}^{2M})^{\frac12}\bigr)dt \mu(dz)\\
&\le C_{\gamma,p,M}(\phi)\int \int_{0}^{\infty}\bigl(\E|(-A)^{\gamma}(Y^y(t)-Y^z(t)|_{L^p}^{2}\bigr)^{\frac12}dt\bigl(1+|(-A)^\gamma y|_{L^p}^M+|(-A)^\gamma z|_{L^p}^{M}\bigr)\mu(dz)\\
&\le C_{\gamma,p,M}(\phi)\int |(-A)^\gamma (y-z)|_{L^p}|\bigl(1+|(-A)^\gamma y|_{L^p}^M+|(-A)^\gamma z|_{L^p}^{M}\bigr)\mu(dz)\\
&\le C_{\gamma,p,M}\bigl(1+|(-A)^\gamma y|_{L^p}^{M+1}\bigr).
\end{align*}
\end{proof}

\subsection{Well-posedness and moment estimates}

We are now in position to state (and give a sketch of proof of) a well-posedness result for~\eqref{eq:SPDE}, for arbitrary $\epsilon>0$. Without loss of generality, it is assumed that $\epsilon\in(0,1)$.

We also state moment estimates for $X^\epsilon(t)$. These estimates are uniform with respect to the parameter $\epsilon\in(0,1)$.

\begin{propo}\label{propo:well}
Let $T\in(0,\infty)$. For any $x_0\in H$, any $y_0\in H$, and any $\epsilon\in(0,1)$, the SPDE~\eqref{eq:SPDE} admits a unique mild solution, with initial conditions $X^\epsilon(0)=x_0$, $Y^\epsilon(0)=y_0$, such that for all $t\in[0,T]$,
\begin{equation}\label{eq:SPDE_mild}
X^\epsilon(t)=e^{tA}x_0+\int_{0}^{t}e^{(t-s)A}F(X^\epsilon(s),Y^\epsilon(s))ds+\int_{0}^{t}e^{(t-s)A}dW^Q(s).
\end{equation}
In addition, the following moment estimates are satisfied, uniformly with respect to $\epsilon\in(0,1)$: for any $T\in(0,\infty)$, $\alpha\in[0,\aly)$, $p\ge 2$ and $M\in\N$, there exists $C_{\alpha,p,M}(T)\in(0,\infty)$, such that for all $x_0,y_0\in L^p$, such that $|(-A)^{\alpha}x_0|_{L^p}<\infty$,
\begin{equation}\label{eq:moment}
\underset{\epsilon\in(0,1)}\sup~\underset{t\in[0,T]}\sup~\E\bigl[|(-A)^{\alpha}X^\epsilon(t)|_{L^p}^M\bigr]\le C_{\alpha,p,M}(T)\bigl(1+|(-A)^{\alpha}x_0|_{L^p}^M+|y_0|_{L^p}^{M}+M_{\alpha,p}(Q^\frac12,T)^M\bigr).
\end{equation}
\end{propo}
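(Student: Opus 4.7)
The plan is to treat $Y^\epsilon$ as a given (exogenous) predictable process, so that conditional on $Y^\epsilon$, equation~\eqref{eq:SPDE} is a semilinear SPDE with a globally Lipschitz drift $F(\cdot,Y^\epsilon(s))$ and an additive noise whose law does not depend on $\epsilon$. Well-posedness then follows from the standard Banach fixed-point argument in the space $L^M(\Omega;\mathcal{C}([0,T],L^p))$: the stochastic convolution $Z(t)=\int_0^t e^{(t-s)A}dW^Q(s)$ has continuous $L^p$-valued trajectories with finite moments of all orders (by the factorization method combined with~\eqref{eq:assQ}), and the drift contraction map is built on a short time interval and iterated, using the Lipschitz bound $|F(x_1,y)-F(x_2,y)|_{L^p}\le {\rm Lip}(f)|x_1-x_2|_{L^p}$ which is uniform in $y$.

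For the moment estimate~\eqref{eq:moment}, I would apply $(-A)^\alpha$ in the mild formulation and split:
\[
(-A)^\alpha X^\epsilon(t)=(-A)^\alpha e^{tA}x_0+\int_0^t (-A)^\alpha e^{(t-s)A}F(X^\epsilon(s),Y^\epsilon(s))ds+(-A)^\alpha Z(t).
\]
For the first term, $|(-A)^\alpha e^{tA}x_0|_{L^p}\le |(-A)^\alpha x_0|_{L^p}$. For the stochastic convolution, the Burkholder-Davis-Gundy-type inequality, together with the definition of $M_{\alpha,p}(Q^{1/2},T)$ in~\eqref{eq:assQ}, gives $\E[\sup_{t\in[0,T]}|(-A)^\alpha Z(t)|_{L^p}^M]\le C_{p,M}(T)M_{\alpha,p}(Q^{1/2},T)^M$ (a factorization argument with exponent $\delta\in(0,1/2-\alpha)$ turns the supremum-in-time bound into an integral bound so that Jensen applies cleanly).

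For the drift term, using the bound $\|(-A)^\alpha e^{(t-s)A}\|_{\mathcal{L}(L^p,L^p)}\le C(t-s)^{-\alpha}$ for $\alpha\in[0,\aly)\subset[0,1)$, and the linear growth $|F(x,y)|_{L^p}\le C(1+|x|_{L^p}+|y|_{L^p})\le C(1+|(-A)^\alpha x|_{L^p}+|y|_{L^p})$ (the last step by continuous embedding $D((-A)^\alpha)\hookrightarrow L^p$), I get
\[
\E|(-A)^\alpha X^\epsilon(t)|_{L^p}^M \le C\Bigl(1+|(-A)^\alpha x_0|_{L^p}^M+M_{\alpha,p}(Q^{1/2},T)^M+\int_0^t(t-s)^{-\alpha}\bigl(\E|(-A)^\alpha X^\epsilon(s)|_{L^p}^M+\E|Y^\epsilon(s)|_{L^p}^M\bigr)ds\Bigr),
\]
after Jensen (or Hölder) applied to the time integral to absorb the weight $(t-s)^{-\alpha}$. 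The uniform-in-$\epsilon$ control of the $Y^\epsilon$ term is the critical ingredient: since $Y^\epsilon(s)=Y(s/\epsilon)$, estimate~\eqref{eq:assY_1} yields
\[
\underset{\epsilon\in(0,1)}\sup~\underset{s\in[0,T]}\sup~\E|Y^\epsilon(s)|_{L^p}^M\le C(1+|y_0|_{L^p}^M),
\]
\emph{without} any dependence on $\epsilon$, precisely because~\eqref{eq:assY_1} is a bound uniform in time. Conclusion by the singular Gronwall lemma (for kernels of the form $(t-s)^{-\alpha}$ with $\alpha<1$) gives~\eqref{eq:moment}.

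The main obstacle is really a bookkeeping one: ensuring that every constant that appears is independent of $\epsilon$. This reduces, for the drift piece, to the uniform moment bound on $Y^\epsilon$ just described; for the noise piece, the integral $M_{\alpha,p}(Q^{1/2},T)$ is $\epsilon$-independent by construction. The condition $\alpha<\aly\le 1$ is what allows the factorization/singular Gronwall step, and the embedding-based linear-growth estimate on $F$ avoids invoking Proposition~\ref{propo:inequalities} at this stage (which will instead be used later in the analysis of the Poisson equation).
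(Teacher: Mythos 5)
Your proposal is correct and follows essentially the same route as the paper's (sketched) proof: a fixed-point argument for well-posedness, then the mild-formulation splitting with semigroup smoothing and the Lipschitz/linear-growth bound on $F$, the It\^o/Burkholder bound via~\eqref{eq:assQ} for the stochastic convolution, the uniform-in-time moment bound~\eqref{eq:assY_1} to make the estimate $\epsilon$-independent, and Gronwall. The only cosmetic difference is that the paper closes the Gronwall argument at $\alpha=0$ first and then reads the $\alpha\in(0,\aly)$ bound directly off the convolution estimates, whereas you run a singular Gronwall directly on $\E|(-A)^\alpha X^\epsilon(t)|_{L^p}^M$ using the embedding $|x|_{L^p}\le C|(-A)^\alpha x|_{L^p}$; both work.
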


\begin{rem}
Using regularization properties of the semigroup $\bigl(e^{tA}\bigr)_{t\ge 0}$, the moment estimates~\eqref{eq:moment} may be replaced with
\[
\underset{\epsilon\in(0,1)}\sup~\E\bigl[|(-A)^{\alpha}X^\epsilon(t)|_{L^p}^M\bigr]\le C_{\alpha,p,M}(T)\bigl(1+t^{-\alpha M}|x_0|_{L^p}^M+|y_0|_{L^p}^{M}+M_{\alpha,p}(Q^\frac12,T)^M\bigr).
\]
Therefore the regularity assumption on the initial condition $x_0$ may be relaxed.
\end{rem}

We conclude this section with a sketch of proof of Proposition~\ref{propo:well}.
\begin{proof}[Proof of Proposition~\ref{propo:well}]
The existence and uniqueness of a mild solution~\eqref{eq:SPDE_mild} of~\eqref{eq:SPDE} is obtained by a standard fixed point argument, see for instance~\cite{DPZ}.

The proof of the moment estimates~\eqref{eq:moment} combines the following observations. On the one hand, by regularization properties of the semigroup and Lipschitz continuity of $F$,
\[
\big|(-A)^\alpha\int_{0}^{t}e^{(t-s)A}F(X^\epsilon(s),Y^\epsilon(s))ds\big|_{L^p}\le C\int_{0}^{t}(t-s)^{-\alpha}\bigl(1+|X^\epsilon(s)|_{L^p}+|Y^\epsilon(s)|_{L^p}\bigr)ds.
\]
On the other hand, thanks to~\eqref{eq:assQ}, see Assumption~\ref{ass:Q}, the moment estimate
\[
\E|(-A)^\alpha\int_{0}^{t}e^{(t-s)A}dW^Q(s)|_{L^p}^2\le c_p\int_{0}^{T}\|(-A)^{\alpha}e^{tA}Q^{\frac{1}{2}}\|_{\mathcal{R}(L^2,L^p)}^2dt=c_pM_{\alpha,p}(Q^\frac12,T)^2<\infty
\]
for the stochastic convolution, is easily obtained, in the case $M=2$. Higher order moments are estimated using a Burkholder-Davis-Gundy type inequality.

The case $\alpha=0$ is treated using Gronwall inequality, and then the case $\alpha\in(0,\aly)$ follows from the estimates above.

This concludes the sketch of proof of Proposition~\ref{propo:well}.
\end{proof}

\section{The averaging principle}\label{sec:averaging}

Let us first define the so-called averaged coefficient $\Fb$.
\begin{defi}
For any $x\in L^2$, define
\begin{equation}
\Fb(x)=\int F(x,y) d\mu(y)\in L^2,
\end{equation}
where $\mu$ is the unique invariant probability distribution of the ergodic process $Y$, see Assumption~\ref{ass:Y}.
\end{defi}

Since $F$ is the Nemytskii operator associated with a globally Lipschitz continuous function $f$, using Assumption~\ref{ass:Y}, it is straightforward to check that $\Fb(x)\in L^p$ if $x\in L^p$, for all $p\in[2,\infty)$.

The first and second order derivatives of the averaged coefficient $\Fb$ satisfy the following estimates.
\begin{propo}\label{propoFb}
For all $p\in[2,\infty)$, there exists $C_p\in(0,\infty)$ such that for all $h\in L^p$
\begin{equation}\label{eq:propoFb1}
\underset{x\in L^2}\sup~|D\Fb(x).h|_{L^p}\le C_p|h|_{L^p}.
\end{equation}

For all $p\in[2,\infty)$, and $p_1,p_2\in[2,\infty)$, such that $\frac{1}{p}=\frac{1}{p_1}+\frac{1}{p_2}$, there exists $C_{p_1,p_2}\in(0,\infty)$ such that for all $h_1\in L^{p_1}$ and $h_2\in L^{p_2}$,
\begin{equation}\label{eq:propoFb2}
\underset{x\in L^2}\sup~|D^2\Fb(x).(h_1,h_2)|_{L^p}\le C_{p_1,p_2}|h_1|_{L^{p_1}}|h_2|_{L^{p_2}}.
\end{equation}
\end{propo}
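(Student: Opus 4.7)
The plan is to exploit that $F(x,y)$ is a Nemytskii operator, so its derivatives in $x$ are pointwise multiplications, and then to interchange Fréchet differentiation in $x$ with integration against $\mu$.

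First, under Assumption~\ref{ass:f}, a standard computation for Nemytskii operators (whose derivatives are bounded) gives, for fixed $y\in H$, any $x,h,h_1,h_2$, and almost every $\xi\in\mathcal{D}$,
\begin{align*}
\bigl(DF(x,y)\cdot h\bigr)(\xi) &= \partial_{z_1} f\bigl(x(\xi),y(\xi)\bigr)\, h(\xi),\\
\bigl(D^2 F(x,y)\cdot (h_1,h_2)\bigr)(\xi) &= \partial_{z_1}^2 f\bigl(x(\xi),y(\xi)\bigr)\, h_1(\xi)\, h_2(\xi).
\end{align*}
Since the partial derivatives of $f$ are bounded, these pointwise formulas immediately yield the $y$-uniform bounds $|DF(x,y)\cdot h|_{L^p}\le \|\partial_{z_1} f\|_\infty |h|_{L^p}$ and, by Hölder's inequality with $\tfrac{1}{p}=\tfrac{1}{p_1}+\tfrac{1}{p_2}$,
\[
|D^2 F(x,y)\cdot (h_1,h_2)|_{L^p}\le \|\partial_{z_1}^2 f\|_\infty |h_1|_{L^{p_1}}|h_2|_{L^{p_2}},
\]
both uniform in $x$ as well, since the constants only involve global $L^\infty$-bounds on derivatives of $f$.

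Second, I would justify the commutation of derivatives and the $\mu$-integral defining $\Fb$, namely
\[
D\Fb(x)\cdot h = \int DF(x,y)\cdot h\, d\mu(y),\qquad D^2\Fb(x)\cdot (h_1,h_2)=\int D^2 F(x,y)\cdot (h_1,h_2)\, d\mu(y),
\]
by dominated convergence: the difference quotients defining $DF$ and $D^2 F$ are controlled by the uniform bounds above, which are integrable against the probability measure $\mu$.

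Third, the claimed estimates~\eqref{eq:propoFb1} and~\eqref{eq:propoFb2} follow by applying Minkowski's integral inequality to move the $L^p$-norm inside the integral against $\mu$, and then inserting the uniform pointwise bounds from the first step. Independence of the bounds from $x\in L^2$ comes for free, since the pointwise bounds on $\partial_{z_1} f$ and $\partial_{z_1}^2 f$ do not depend on $x$ or $y$.

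There is no substantial obstacle here; the only mild technical point is to set up Fréchet differentiability of the Nemytskii operator $x\mapsto F(x,y)$ between the relevant $L^p$ spaces, but this is classical given that $\partial_{z_1}f$ and $\partial_{z_1}^2 f$ are bounded and (globally) continuous. The construction above also explains why Assumption~\ref{ass:f} need only require $\mathcal{C}^4$-regularity with bounds on derivatives of orders $1,\ldots,4$: the estimates for $\Fb$ of order $k\le 2$ use only the boundedness of $\partial_{z_1}^k f$.
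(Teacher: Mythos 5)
Your proposal is correct and follows essentially the same route as the paper: compute the first and second derivatives of the Nemytskii operator $F(\cdot,y)$ pointwise as $\partial_{z_1}f(x,y)h$ and $\partial_{z_1}^2f(x,y)h_1h_2$, bound them uniformly using the boundedness of the derivatives of $f$ together with H\"older's inequality, and integrate against $\mu$. The additional remarks on dominated convergence and Minkowski's integral inequality are standard justifications that the paper leaves implicit.
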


\begin{proof}[Proof of Proposition~\ref{propoFb}]
Note that for all $x,y\in L^2$ and $h\in L^p$, $h_1\in L^{p_1}$ and $h_2\in L^{p_2}$,
\begin{align*}
& D_xF(x,y).h=\partial_{z_1}f(x,y)h\\
& D_xxF(x,y).(h_1,h_2)=\partial_{z_1}^2f(x,y)h_1 h_2.
\end{align*}
The conclusion follows using boundedness of the first and second order derivatives of $f$, H\"older inequality, and integrating with respect to $\mu(y)$.
\end{proof}

We are now in position to state a well-posedness result for the averaged equation:
\begin{equation}\label{eq:av}
d\Xb(t)=A\Xb(t)dt+\Fb(\Xb(t))dt+dW^Q(t).
\end{equation}

\begin{propo}\label{propo:well_av}
Let $T\in(0,\infty)$. For any $x_0\in H$, the SPDE~\eqref{eq:av} admits a unique mild solution, with initial condition $\Xb(0)=x_0$, such that for all $t\in[0,T]$,
\begin{equation}\label{eq:av_mild}
\Xb(t)=e^{tA}x_0+\int_{0}^{t}e^{(t-s)A}\Fb(\Xb(s))ds+\int_{0}^{t}e^{(t-s)A}dW^Q(s).
\end{equation}
In addition, the following moment estimates are satisfied, uniformly with respect to $\epsilon\in(0,1)$: for any $T\in(0,\infty)$, $\alpha\in[0,\aly)$, $p\ge 2$ and $M\in\N$, there exists $C_{\alpha,p,M}(T)\in(0,\infty)$, such that for all $x_0,y_0\in L^p$, such that $|(-A)^{\alpha}x_0|_{L^p}<\infty$,
\begin{equation}\label{eq:moment_av}
\underset{t\in[0,T]}\sup~\E\bigl[|(-A)^{\alpha}\Xb(t)|_{L^p}^M\bigr]\le C_{\alpha,p,M}(T)\bigl(1+|(-A)^{\alpha}x_0|_{L^p}^M+M_{\alpha,p}(Q^\frac12,T)^M\bigr).
\end{equation}
\end{propo}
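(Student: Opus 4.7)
The plan is to mirror the sketch of proof of Proposition~\ref{propo:well}, simplified by the absence of a fast component. The key algebraic input is that $\Fb:L^p\to L^p$ is globally Lipschitz continuous with a constant independent of $p$, an immediate consequence of~\eqref{eq:propoFb1} in Proposition~\ref{propoFb} combined with $|\Fb(0)|_{L^p}\le C$ (which follows from boundedness of the first derivatives of $f$).

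For well-posedness, I would run a standard Banach fixed-point argument on the space of predictable, $L^p$-valued processes on $[0,T]$, equipped with a suitable exponentially weighted supremum-in-time norm on the $M$-th moment. Lipschitz continuity of $\Fb$ provides the contraction, while the stochastic convolution $\int_0^t e^{(t-s)A}dW^Q(s)$ lies in $L^p$ almost surely thanks to Assumption~\ref{ass:Q} applied with $\alpha=0$. Uniqueness and extendability to any finite horizon then follow in the usual way.

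For the moment bound~\eqref{eq:moment_av}, I would first treat the case $\alpha=0$. Using that $e^{tA}$ is a contraction on $L^p$ and the bound $|\Fb(x)|_{L^p}\le C(1+|x|_{L^p})$, the mild formulation~\eqref{eq:av_mild} yields
\begin{equation*}
|\Xb(t)|_{L^p}\le |x_0|_{L^p}+C\int_0^t(1+|\Xb(s)|_{L^p})ds+\bigl|\int_0^t e^{(t-s)A}dW^Q(s)\bigr|_{L^p}.
\end{equation*}
Taking $M$-th moments, controlling the stochastic convolution by $c_{p,M}\,M_{0,p}(Q^{\frac12},T)^M$ via It\^o's isometry and a Burkholder-Davis-Gundy inequality, and applying Gronwall's inequality produces the claim for $\alpha=0$. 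For $\alpha\in(0,\aly)$, I would apply $(-A)^\alpha$ to~\eqref{eq:av_mild}: the semigroup gives $|(-A)^\alpha e^{tA}x_0|_{L^p}\le|(-A)^\alpha x_0|_{L^p}$, while the standard regularization bound $\|(-A)^\alpha e^{sA}\|_{\mathcal{L}(L^p,L^p)}\le C s^{-\alpha}$ combined with Lipschitz growth of $\Fb$ yields
\begin{equation*}
\bigl|(-A)^\alpha\int_0^t e^{(t-s)A}\Fb(\Xb(s))ds\bigr|_{L^p}\le C\int_0^t(t-s)^{-\alpha}\bigl(1+|\Xb(s)|_{L^p}\bigr)ds,
\end{equation*}
whose $M$-th moment is controlled by the $\alpha=0$ bound together with integrability of $(t-s)^{-\alpha}$ for $\alpha<1$. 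The corresponding stochastic convolution contribution is bounded by $c_{p,M}\,M_{\alpha,p}(Q^{\frac12},T)^M$ using~\eqref{eq:assQ} and a Burkholder-Davis-Gundy inequality.

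No genuine obstacle is expected: the proof is entirely parallel to the sketch of Proposition~\ref{propo:well}, with $F(\cdot,Y^\epsilon(\cdot))$ replaced by $\Fb(\cdot)$ and with the $|y_0|_{L^p}^M$ contribution dropping out since no fast process is present. The only point to verify is that Proposition~\ref{propoFb} supplies the same Lipschitz-type estimates on $\Fb$ that Assumption~\ref{ass:f} provides for $F$, which is immediate.
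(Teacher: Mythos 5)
Your proposal is correct and follows essentially the same route as the paper, which omits the proof but states explicitly that existence and uniqueness follow from the global Lipschitz continuity of $\Fb$ (via~\eqref{eq:propoFb1} in Proposition~\ref{propoFb}) and that the moment estimates~\eqref{eq:moment_av} are obtained by the same arguments as in the proof of Proposition~\ref{propo:well}, using~\eqref{eq:assQ}. Your treatment of the $\alpha=0$ case by Gronwall followed by the regularization bound $\|(-A)^\alpha e^{sA}\|_{\mathcal{L}(L^p,L^p)}\le Cs^{-\alpha}$ for $\alpha\in(0,\aly)$ matches the structure of that sketch exactly, with the fast-component terms dropped as expected.
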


The proof is omitted. Existence and uniqueness of the mild solution follows from the global Lipschitz continuity property of $\Fb$ (thanks to~\eqref{eq:propoFb1}, see Proposition~\ref{propoFb}). The moment estimates~\eqref{eq:moment_av} are proved using the same arguments as in the proof of Proposition~\ref{propo:well}, in particular using~\eqref{eq:assQ}, see Proposition~\ref{ass:Q}.

To conclude this section, the infinitesimal generator associated with the averaged equation~\eqref{eq:av} is introduced:
\begin{equation}\label{eq:gen_av}
\Lb\varphi(x)=\langle D_x\varphi(x),Ax+\Fb(x)\rangle+\frac12 \sum_{n\in\N}q_nD_x^2\varphi(x).\bigl(f_n,f_n\bigr).
\end{equation}
This definition makes sense for sufficiently regular functions $\varphi:L^2\to\R$.

\section{Statements of the main results}\label{sec:main}

This section is devoted to the statements of the main results of this article, concerning the error in the averaging principle. We exhibit both strong and weak orders of convergence, with respect to $\epsilon$. Two situations need to be considered, depending on the regularity properties of the slow and the fast component, more precisely in terms of $\gamy$ (see Assumption~\ref{ass:Y}) and $\aly$ (see Assumption~\ref{ass:Q}).

Let us introduce Assumption~\ref{ass:regular} (resp. Assumption~\ref{ass:general}) which defines the {\it regular} case (resp. the {\it general} case).
\begin{hyp}\label{ass:regular}
The parameters $\aly$ and $\gamy$ satisfy the condition
\[
\aly+\gamy>1.
\]
Moreover, assume that ${\rm Tr}(Q)=\sum_{n\in\N}q_n<\infty$.
\end{hyp}

\begin{hyp}\label{ass:general}
Assume that $\bigl(\sum_{n\in\N}q_n^{\frac{\varrho}{2}}\bigr)^{\frac{2}{\rho}}<\infty$, for some
\[
\varrho\in\begin{cases}[2,\infty],~d=1,\\
[2,\frac{d}{d-2}),~d=2,\end{cases}
\]
with usual conventions if $\varrho=\infty$ or if $d=2$.

Let $\aly=\frac{1}{2}\bigl(1-\frac{d}{2}(1-\frac{2}{\varrho})\bigr)$. Moreover, assume that $\gamy\le\aly$.
\end{hyp}

The definition of the parameter $\aly$ in Assumption~\ref{ass:general} is consistent with Assumption~\ref{ass:Q}, see Proposition~\ref{propo:suffQ}. Note that $\aly\in(0,\frac12]$.

\begin{rem}
If Assumption~\ref{ass:general} is satisfied, the condition $\aly\ge \gamy$ is not restrictive. Indeed, in practice, when the regularity $\aly$ is given, one may always replace $\gamy$ with $\min(\aly,\gamy)$ without extra assumption.
\end{rem}

\begin{rem}
The condition~${\rm Tr}(Q)<\infty$ in Assumption~\ref{ass:regular} is not very restrictive. For instance, if $\aly$ is caracterized by the property that for all $\alpha<\aly$ and all $p\in[2,\infty)$, $\|(-A)^{\alpha-\frac12}Q^{\frac12}\|_{\mathcal{R}(L^2,L^p)}<\infty$, the condition is satisfied since ${\rm Tr}(Q)=\|Q^{\frac12}\|_{\mathcal{R}(L^2,L^2)}<\infty$, with $\alpha=\frac12<\aly$.
\end{rem}



First, in the very regular case (see Section~\ref{sec:main_regular}), the strong (resp. weak) order of convergence is equal to $\frac12$ (resp. $1$). This coincides with the orders of convergence obtained in~\cite{B:2012}, where no stochastic perturbation is acting in the slow component, {\it i.e.} $Q=0$ in~\eqref{eq:SPDE}. The weak order $1$ also essentially coincides with the result from~\cite{fu2018weak}, where it is assumed that $\aly=1$ and $\gamy=0$. Moreover, this also coincides with the orders obtained in the case of SDEs (see for instance~\cite{PavliotisStuart}). In particular, these values are optimal in general.

Second, in the less regular case (see Section~\ref{sec:main_general}), Assumption~\ref{ass:general} is satisfied, and it is proved that the strong (resp. weak) order of convergence is equal to $\frac{\aly}{1+\aly-\gamy}$ (resp. $\frac{2\aly}{1+\aly-\gamy}$).  The proof is based on the application of the result in the regular case for a well-chosen approximate problem, with modified covariance operator $Q$. It is not known whether these strong and weak orders of convergence are optimal. On the one hand, observe the orders of convergence are maximal when $\gamy=\aly$, in which case the strong and weak orders are $\aly$ and $2\aly$ respectively, hence are clearly related to the spatial and temporal regularity of the processes. On the other hand, when $\gamy$ is arbitrarily small, the strong and weak orders of convergence are $\frac{\aly}{1+\aly}$ and $\frac{2\aly}{1+\aly}$ respectively. The application of the standard Khasminskii strategy would also lead to a strong order of convergence equal to $\frac{\aly}{1+\aly}$, see~\cite{B:2012}. As a consequence, the additional use of regularity properties of the fast process in the analysis allows us to get improved orders of convergence.



\subsection{The very regular case}\label{sec:main_regular}

In this section, it is assumed that Assumption~\ref{ass:regular} is satisfied. As a consequence, there exists $\gamma\in(1-\aly,\gamy)$, such that $M_{1-\gamma,p}(Q^\frac12,T)<\infty$ for all $p\in[2,\infty)$ and all $T\in(0,\infty)$, see~\eqref{eq:assQ}. In particular,
\[
M_{1-\gamma,8}(Q^\frac12,T)<\infty,
\]
for all $T\in(0,\infty)$,.

We are now in position to provide precise statements of the results, concerning the order of convergence of the averaging error, in the regular case.

\begin{theo}\label{th:strong_regular}
Let Assumption~\ref{ass:regular} be satisfied. Let $T\in(0,\infty)$, and assume that the initial conditions $x_0$, $y_0$, satisfy
\[
|(-A)^{1-\gamma}x_0|_{L^8}+|(-A)^{\gamma+\kappa}y_0|_{L^8}<\infty,
\]
for some $\gamma\in(1-\aly,\gamy)$ and some $\kappa\in(0,\gamy-\gamma)$.

Then there exists $C(T,x_0,y_0)\in(0,\infty)$ such that for all $\epsilon\in(0,1)$,
\begin{equation}\label{eq:th_strong_regular}
\underset{t\in[0,T]}\sup~\bigl(\E|X^\epsilon(t)-\Xb(t)|_{L^2}^2\bigr)^{\frac12}\le C(T,x_0,y_0)\bigl({\rm Tr}(Q)+M_{1-\gamma,8}(Q^\frac12,T)\bigr)^{\frac12}\epsilon^{\frac12}.
\end{equation}
\end{theo}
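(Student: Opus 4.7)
I would follow the Poisson-equation strategy adapted to mild solutions of parabolic SPDEs. Subtracting the mild formulations for $X^\epsilon$ and $\Xb$, the stochastic convolutions cancel and
$$X^\epsilon(t)-\Xb(t) = \int_0^t e^{(t-s)A}\bigl[F(X^\epsilon(s),Y^\epsilon(s)) - \Fb(\Xb(s))\bigr]\,ds.$$
Splitting the integrand as
$$\bigl[F(X^\epsilon(s),Y^\epsilon(s)) - \Fb(X^\epsilon(s))\bigr] + \bigl[\Fb(X^\epsilon(s))-\Fb(\Xb(s))\bigr]$$
and using the global Lipschitz continuity of $\Fb$ from Proposition~\ref{propoFb}, a Gronwall argument reduces the theorem to proving that the oscillating remainder
$$R^\epsilon(t) := \int_0^t e^{(t-s)A}\bigl[F(X^\epsilon(s),Y^\epsilon(s)) - \Fb(X^\epsilon(s))\bigr]\,ds$$
satisfies $\sup_{t\in[0,T]}(\E|R^\epsilon(t)|_{L^2}^2)^{1/2} \le C\epsilon^{1/2}$.

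To treat $R^\epsilon$, I would dualize: for any $h\in L^2$,
$$\langle R^\epsilon(t),h\rangle = \int_0^t \phi_{X^\epsilon(s),\,h_{t,s}}(Y^\epsilon(s))\,ds,\qquad h_{t,s}:=e^{(t-s)A}h,$$
with $\phi_{x,k}(y):=\langle F(x,y)-\Fb(x),k\rangle$. For each frozen pair $(x,k)$ the function $\phi_{x,k}$ is centred with respect to $\mu$ and satisfies~\eqref{eq:assPoisson2ass} (this uses the Lipschitz bound on $f$ together with H\"older), so Assumption~\ref{ass:Poisson} furnishes a solution $\Psi_{x,k}$ of $-\Ly\Psi_{x,k}=\phi_{x,k}$ with $\int\Psi_{x,k}\,d\mu=0$ and growth controlled by $|(-A)^\gamma y|_{L^p}^{M+1}|k|_{L^p}$. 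Differentiating the probabilistic representation yields the first and second $x$-derivatives of $\Psi$; combined with Propositions~\ref{propo:Y} and~\ref{propo:inequalities}, one obtains analogous growth estimates on $D_x\Psi$ and $D_x^2\Psi$, which are what drives the next step.

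The central step is an It\^o-type expansion for $s\mapsto\epsilon\,\Psi_{X^\epsilon(s),\,h_{t,s}}(Y^\epsilon(s))$: since $Y^\epsilon$ has generator $\epsilon^{-1}\Ly$ and $X^\epsilon$ solves~\eqref{eq:SPDE}, formal identification produces
\begin{align*}
\int_0^t \phi_{X^\epsilon(s),\,h_{t,s}}(Y^\epsilon(s))\,ds =\ & -\epsilon\bigl[\Psi_{X^\epsilon(t),h}(Y^\epsilon(t))-\Psi_{x_0,h_{t,0}}(y_0)\bigr]\\
& + \epsilon\int_0^t \partial_s\bigl(\Psi_{X^\epsilon(s),\,h_{t,s}}\bigr)(Y^\epsilon(s))\,ds + \epsilon\,\mathscr{R}^\epsilon(t),
\end{align*}
where $\mathscr{R}^\epsilon(t)$ collects contributions from $\partial_s h_{t,s}=-Ah_{t,s}$, from the drift $AX^\epsilon+F$ and the stochastic convolution $W^Q$ of $X^\epsilon$ acting on $D_x\Psi$, together with a second-order It\^o correction involving $D_x^2\Psi$ and $Q$. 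The boundary and drift contributions are bounded pointwise through the growth estimates on $\Psi$ and the moment bound~\eqref{eq:moment}; the martingale contribution is handled by It\^o isometry, using the ideal property to write $\|D_x\Psi\cdot Q^{1/2}\|_{\mathcal{R}(L^2,L^2)}^2\le C\,\mathrm{Tr}(Q)$; the quadratic-variation term also produces the $\mathrm{Tr}(Q)$ factor. Taking the supremum over $|h|_{L^2}\le 1$ yields the desired $\epsilon^{1/2}$ estimate on $R^\epsilon$.

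The main technical obstacle is that $\partial_s h_{t,s}=-Ah_{t,s}$ and the drift of $X^\epsilon$ both involve the unbounded operator $A$, and neither quantity lies in $L^2$ for generic data. I would integrate by parts, or exploit the analytic semigroup smoothing, to transfer a power $(-A)^{1-\gamma}$ onto $X^\epsilon$ and onto the stochastic convolution, while placing $(-A)^\gamma$ on $h_{t,s}$ and on $Y^\epsilon$ through Proposition~\ref{propo:inequalities}, so that the residual time integrals $\int_0^t(t-s)^{-\gamma}\,ds$ converge. This is exactly why the regular regime $\aly+\gamy>1$ is imposed: it permits the choice $\gamma\in(1-\aly,\gamy)$ needed to keep $M_{1-\gamma,8}(Q^{1/2},T)$ finite (for the $X^\epsilon$ side) and simultaneously to control $|(-A)^\gamy Y|_{L^8}$-moments (for the $\Psi$ side); it is also why the $\mathrm{Tr}(Q)$ factor appears in the bound, since the second-order It\^o correction carries no extra $A$-power. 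Rigorous justification of the formal It\^o expansion above, in spite of $X^\epsilon$ having no classical differential, is the most delicate point: I would carry it out either by a Galerkin-type regularization that truncates the spectrum of $A$ and passes to the limit, or by reworking the identity entirely on the mild form of $X^\epsilon$.
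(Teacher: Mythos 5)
Your overall strategy (reduce to the oscillating remainder, solve a Poisson equation for the fast variable, expand via It\^o) is the right family of techniques, but there is a genuine gap at the heart of the argument: your It\^o expansion of $s\mapsto\epsilon\,\Psi_{X^\epsilon(s),h_{t,s}}(Y^\epsilon(s))$ omits the local martingale associated with the \emph{fast} process $Y^\epsilon$ itself. Every term you do list (the boundary terms, the $-Ah_{t,s}$ term, the drift of $X^\epsilon$, the $W^Q$-martingale, the second-order correction in $Q$) is of size ${\rm O}(1)$ before the factor $\epsilon$, so your bookkeeping would deliver a strong rate $\epsilon$ --- which cannot be right, since the strong order $\tfrac12$ is optimal already for SDEs. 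The term actually responsible for the rate $\tfrac12$ is the martingale part of $\Psi(\cdot,Y^\epsilon(\cdot))$ coming from $Y^\epsilon$: its predictable bracket scales like $\epsilon^{-1}\int_0^t\Gamma(\Psi,\Psi)(Y^\epsilon(s))\,ds$ (with $\Gamma$ the carr\'e du champ of $Y$), so after multiplication by $\epsilon$ it contributes ${\rm O}(\epsilon^{1/2})$ in mean square and dominates everything else. To control it you would need quantitative bounds on $\Gamma(\Psi,\Psi)$, i.e.\ on the martingale structure of $Y$, and Assumption~\ref{ass:Poisson} provides nothing of the sort: $Y$ is only an abstract ergodic Markov process for which the Poisson equation is solvable with growth bounds on $\psi$. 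A secondary issue is the final step ``take the supremum over $|h|_{L^2}\le 1$'': the identity for $\langle R^\epsilon(t),h\rangle$ holds pathwise, but the martingale contributions are only controlled in $L^2(\Omega)$ for each fixed $h$, and $\E|R^\epsilon(t)|_{L^2}^2$ is not $\sup_{|h|\le1}\E[\langle R^\epsilon(t),h\rangle^2]$ exchanged naively.

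The paper is built precisely to avoid this obstruction. It keeps the Lipschitz difference in the form $F(X^\epsilon,Y^\epsilon)-F(\Xb,Y^\epsilon)$ so that the oscillating term $\delta F(\Xb(s),Y^\epsilon(s))$ is evaluated along the averaged process, then \emph{squares the error first} and expands the resulting double time integral $\int_0^t\int_s^t\E\langle e^{(t-s)A}\delta F(\Xb(s),Y^\epsilon(s)),e^{(t-r)A}\delta F(\Xb(r),Y^\epsilon(r))\rangle\,dr\,ds$. The inner integral is rewritten as $\int_s^t\E[-\Ly\Phi(\Xb(r),Y^\epsilon(r),\theta_{s,t}(r))]\,dr$ and treated with the Dynkin formula \emph{in expectation}, where all martingale terms vanish; only the generator terms survive (Lemmas~\ref{lem:I1}, \ref{lem:I2}, \ref{lem:I3}), each carrying a factor $\epsilon$. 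Since this $\epsilon$ bounds the \emph{squared} error, the rate $\epsilon^{1/2}$ follows from Gronwall without ever touching the carr\'e du champ of $Y$. If you want to salvage your route, you must either add explicit assumptions on the martingale bracket of $Y$ (e.g.\ $Y$ solving a concrete SPDE with additive noise, as in the SDE theory of~\cite{PavliotisStuart}), or switch to the correlation/double-integral formulation.
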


To state the weak error result, an appropriate notion of admissible test function is used.
\begin{defi}
Let $\varphi:L^2\to\R$. It is called admissible if the following derivatives of $\varphi$ exist and are continuous, and if the estimates below are satisfied.
\begin{itemize}
\item There exists $C\in(0,\infty)$ such that for all $x\in L^2$ and $h\in L^2$,
\[
|D\varphi(x).h|\le C|h|_{L^2},
\]
and, for all $x\in L^2$, $h_1,h_2\in L^2$,
\[
|D^2\varphi(x).(h_1,h_2)|\le C|h_1|_{L^2}|h_2|_{L^2}.
\]
\item For every $p_1,p_2,p_3\in[2,\infty)$ such that $1=\frac{1}{p_1}+\frac{1}{p_2}+\frac{1}{p_3}$, there exists $C_{p_1,p_2,p_3}\in(0,\infty)$ such that for all $x\in L^2$, and $h_1\in L^{p_1},h_2\in L^{p_2},h_3\in L^{p_3}$,
\[
|D^3\varphi(x).(h_1,h_2,h_3)|\le C_{p_1,p_2,p_3}|h_1|_{L^{p_1}}|h_2|_{L^{p_2}}|h_3|_{L^{p_3}}.
\]
\end{itemize}
\end{defi}

For instance, a function $\varphi:L^2\to \R$ of class $\mathcal{C}^3$, with bounded derivatives of order $1,2,3$, is admissible. Other admissible functions are constructed as follows:
\[
\varphi(x)=\langle \omega,\tilde{\varphi}(x)\rangle,
\]
where $\tilde{\varphi}:\R\to\R$ is of class $\mathcal{C}_b^3$, and $\omega\in L^\infty$ is a weight function.

The weak error is estimated for the class of admissible test functions introduced above.
\begin{theo}\label{th:weak_regular}
Let Assumption~\ref{ass:regular} be satisfied. Let $T\in(0,\infty)$, and assume that the initial conditions $x_0$, $y_0$, satisfy
\[
|(-A)^{1-\gamma}x_0|_{L^8}+|(-A)^{\gamma+\kappa}y_0|_{L^8}<\infty,
\]
for some $\gamma\in(1-\aly,\gamy)$ and some $\kappa\in(0,\gamy-\gamma)$. Let $\varphi:L^2\to \R$ be an admissible test function.

There exists $C(T,x_0,y_0,\varphi)\in(0,\infty)$ such that for all $\epsilon\in(0,1)$,
\begin{equation}\label{eq:th_weak_regular}
\underset{t\in[0,T]}\sup~|\E[\varphi(X^\epsilon(t))]-\E[\varphi(\Xb(t))]|\le C(T,x_0,y_0,\varphi)\bigl({\rm Tr}(Q)+M_{1-\gamma,4}(Q^\frac12,T)\bigr)\epsilon.
\end{equation}
\end{theo}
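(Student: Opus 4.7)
The plan is to follow the backward Kolmogorov equation approach combined with the Poisson equation technique in the fast variable, which is the standard route to weak order $1$ in averaging. Let $u(t,x)=\E[\varphi(\Xb(t,x))]$, where $\Xb(\cdot,x)$ denotes the mild solution of~\eqref{eq:av} started at $x$. Then $u$ formally solves $\partial_t u=\Lb u$ with $u(0,\cdot)=\varphi$, where $\Lb$ is defined in~\eqref{eq:gen_av}. A preliminary step is to establish regularity estimates on $u$ matching the admissibility structure: bounded $Du(t,x)\in L^2$ and $D^2u(t,x)\in(L^2\otimes L^2)^*$, plus an $L^{p_1}\times L^{p_2}\times L^{p_3}$-estimate on $D^3u$. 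These follow from differentiating the mild equation~\eqref{eq:av_mild} in the initial condition and using the bounds on $D\Fb,D^2\Fb$ from Proposition~\ref{propoFb}, together with the admissibility of $\varphi$.

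Next, I apply a mild version of It\^o's formula to $s\mapsto u(t-s,X^\epsilon(s))$ and take expectation. Since $\partial_s u(t-s,\cdot)=-\Lb u(t-s,\cdot)$ and $\Lb$ differs from the infinitesimal generator of $X^\epsilon$ only through the drift $F-\Fb$ (the diffusion term $W^Q$ is shared), one obtains the representation
\begin{equation*}
\E[\varphi(X^\epsilon(t))]-\E[\varphi(\Xb(t))]=\E\int_0^t \langle Du(t-s,X^\epsilon(s)),F(X^\epsilon(s),Y^\epsilon(s))-\Fb(X^\epsilon(s))\rangle\, ds.
\end{equation*}
For each fixed $(s,x)$, the integrand has zero mean in the $y$-argument under $\mu$, so we let $\psi(s,\cdot,x)$ be the solution of the Poisson equation
\begin{equation*}
-\Ly\psi(s,\cdot,x)=\langle Du(t-s,x),F(x,\cdot)-\Fb(x)\rangle,\qquad \int\psi(s,y,x)\mu(dy)=0,
\end{equation*}
given by Assumption~\ref{ass:Poisson}. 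The Lipschitz-type estimate~\eqref{eq:assPoisson2ass} needed to invoke~\eqref{eq:assPoisson2} is checked via Proposition~\ref{propo:inequalities} applied to $F$, together with the uniform bound $|Du(t-s,x).h|\le C|h|_{L^2}$ coming from the first step.

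The heart of the argument is now to apply It\^o's formula to $s\mapsto\psi(s,Y^\epsilon(s),X^\epsilon(s))$. Because $Y^\epsilon$ has generator $\epsilon^{-1}\Ly$ and is independent of $W^Q$, no cross-variation appears, and the $\Ly$-action on $\psi$ reproduces $-\epsilon^{-1}$ times the integrand from the previous display. Rearranging yields
\begin{equation*}
\int_0^t\langle Du(t-s,X^\epsilon(s)),F(X^\epsilon(s),Y^\epsilon(s))-\Fb(X^\epsilon(s))\rangle\, ds=\epsilon\bigl[\psi(0,y_0,x_0)-\psi(t,Y^\epsilon(t),X^\epsilon(t))\bigr]+\epsilon R^\epsilon(t)+\text{martingale},
\end{equation*}
where $R^\epsilon(t)$ collects the contributions of $\partial_s\psi$ and of the $X^\epsilon$-dynamics acting on the $x$-argument of $\psi$ through $D_x\psi$ and $\tfrac12\sum_n q_n D_x^2\psi.(f_n,f_n)$. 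The prefactor $\epsilon$ is exactly the source of the claimed weak order $1$. Taking expectation kills the martingale term, and the remaining terms are bounded by combining the growth estimate~\eqref{eq:assPoisson2} on $\psi$ (and analogous bounds on $D_x\psi$, $D_x^2\psi$ to be derived in Section~\ref{sec:Poisson}) with the moment estimates~\eqref{eq:moment}.

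The main obstacle will be the contribution involving $AX^\epsilon$ inside the mild It\^o expansion of $\psi(s,Y^\epsilon(s),X^\epsilon(s))$: since $X^\epsilon(s)\notin D(A)$, it must be handled via a duality pairing of the form $\langle(-A)^{1-\gamma}\cdot,(-A)^\gamma\cdot\rangle$, which explains the role of the condition $\aly+\gamy>1$ (so that some $\gamma\in(1-\aly,\gamy)$ exists) and the requirement $|(-A)^{1-\gamma}x_0|_{L^8}<\infty$ on the initial condition. The regularity of $\psi$ in the slow variable $x$ is not stated directly in Assumption~\ref{ass:Poisson}; it must be obtained in Section~\ref{sec:Poisson} by differentiating the representation formula $\psi(y,x)=\int_0^\infty\E[\phi_x(Y_y(r))-\int\phi_x\, d\mu]\, dr$ in $x$ and controlling moments of the $x$-Jacobian of the relevant integrand, using the admissibility structure for the $p$-exponents. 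Combined with the trace-class assumption ${\rm Tr}(Q)<\infty$ and the finiteness of $M_{1-\gamma,4}(Q^{\frac12},T)$ needed to control stochastic convolutions in $X^\epsilon$ at the required spatial regularity, this yields the stated bound~\eqref{eq:th_weak_regular}.
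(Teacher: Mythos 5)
Your proposal follows essentially the same route as the paper's proof: the Kolmogorov equation $\partial_t\ub=\Lb\ub$ for the averaged dynamics with the regularity estimates of Proposition~\ref{propo:ub}, the Poisson equation in the fast variable with source $\langle D_x\ub(t-s,x),F(x,\cdot)-\Fb(x)\rangle$ (the paper's function $v(t,x,y)=\Phi(x,y,D_x\ub(t,x))$), an It\^o expansion of that solution along $(X^\epsilon,Y^\epsilon)$ producing the factor $\epsilon$, and the treatment of the $AX^\epsilon$ term by the $\langle(-A)^{1-\gamma}\cdot,(-A)^{\gamma}\cdot\rangle$ duality enabled by $\aly+\gamy>1$, which is exactly the content of Lemma~\ref{lem:Phi_1bis} and Lemma~\ref{lem:J3}. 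The outline is correct and consistent with the paper's argument.
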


The apparently strong conditions imposed on the initial conditions $x_0$ and $y_0$ may be weakened using standard arguments, thanks to the regularization properties of the semigroup $\bigl(e^{tA}\bigr)_{t\ge 0}$, and minor modifications in the proofs. However, one could not consider the supremum over time $t\in[0,T]$ in~\eqref{eq:th_strong_regular} and~\eqref{eq:th_weak_regular}. In addition, assuming that the initial conditions possess nice spatial regularity properties allows us to focus on the most important issues solved in this manuscript.

Note that the strong order of convergence is equal to $\frac12$, whereas the weak order of convergence is equal to $1$. As explained above, these values are optimal.

The proofs of Theorems~\ref{th:strong_regular} and~\ref{th:weak_regular} are postponed to Sections~\ref{sec:proof_strong_regular} and~\ref{sec:proof_weak_regular} respectively.

\subsection{The less regular case}\label{sec:main_general}

In this section it is assumed that Assumption~\ref{ass:general} is satisfied. Let
\[
\beta_{\rm max}=\frac{\aly}{1+\aly-\gamy},
\]
and observe that $\beta_{\rm max}\le \frac12$.

\begin{theo}\label{th:general}
Let Assumption~\ref{ass:general} be satisfied. Let $T\in(0,\infty)$, and assume that the initial conditions $x_0$, $y_0$, satisfy
\[
|(-A)^{1-\gamma}x_0|_{L^8}+|(-A)^{\gamma+\kappa}y_0|_{L^8}<\infty,
\]
for some $\gamma\in(0,\gamy)$ and some $\kappa\in(0,\aly-\gamma)$.

For any $\beta\in[0,\beta_{\rm max})$, there exists $C_\beta(T,x_0,y_0,Q)\in(0,\infty)$ such that for all $\epsilon\in(0,1)$, the strong error is estimated by
\begin{equation}\label{eq:th_strong_general}
\underset{t\in[0,T]}\sup~\bigl(\E|X^\epsilon(t)-\Xb(t)|_{L^2}^2\bigr)^{\frac12}\le C_\beta(T,x_0,y_0,Q)\epsilon^{\beta}.
\end{equation}

Moreover, let $\varphi:L^2\to \R$ be an admissible test function. For any $\beta\in[0,\beta_{\rm max})$, there exists $C_\beta(T,x_0,y_0,Q,\varphi)\in(0,\infty)$ such that for all $\epsilon\in(0,1)$, the weak error is estimated by
\begin{equation}\label{eq:th_weak_general}
\underset{t\in[0,T]}\sup~|\E[\varphi(X^\epsilon(t))]-\E[\varphi(\Xb(t))]|\le C_\beta(T,x_0,y_0,Q,\varphi)\epsilon^{2\beta}.
\end{equation}
\end{theo}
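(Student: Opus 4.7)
The strategy, as hinted in the discussion after the theorem, is to introduce a family of regularized covariances $Q_\delta$ indexed by $\delta\in(0,1)$, for which Assumption~\ref{ass:regular} holds, apply Theorems~\ref{th:strong_regular} and~\ref{th:weak_regular} to the corresponding regularized system to obtain an $\epsilon$-dependent bound whose constant blows up polynomially in $\delta^{-1}$, estimate the approximation errors between the original and regularized problems in terms of $\delta$, and finally optimize over $\delta=\delta(\epsilon)$. The common optimal choice turns out to be $\delta=\epsilon^{1/(1+\aly-\gamy)}$, which delivers the rates $\epsilon^{\beta_{\rm max}}$ and $\epsilon^{2\beta_{\rm max}}$.

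Concretely, the plan is to set $Q_\delta^{\frac12}=e^{\delta A}Q^{\frac12}$ and to couple the two systems by taking $W^{Q_\delta}=e^{\delta A}W^Q$; let $X^{\epsilon,\delta}$ and $\Xb^\delta$ denote the corresponding mild solutions of~\eqref{eq:SPDE} and~\eqref{eq:av}. Combining the semigroup bound $\|(-A)^\eta e^{\delta A}\|_{\mathcal{L}(L^p,L^p)}\le C\delta^{-\eta}$ with the characterization of $\aly$ in Assumption~\ref{ass:general} (through Proposition~\ref{propo:suffQ}), one checks that for every $\alpha'<\aly$,
\[
M_{\alpha,p}(Q_\delta^{\frac12},T)\le C\delta^{-(\alpha-\alpha')_+}M_{\alpha',p}(Q^{\frac12},T+\delta),\qquad {\rm Tr}(Q_\delta)<\infty.
\]
In particular $Q_\delta$ satisfies Assumption~\ref{ass:regular} with an effective parameter $\aly_\delta$ as close to $1$ as desired, and applying Theorems~\ref{th:strong_regular} and~\ref{th:weak_regular} to $(X^{\epsilon,\delta},\Xb^\delta)$ with $\gamma$ fixed slightly below $\gamy$ yields, for any $\eta>0$,
\[
\underset{t\in[0,T]}\sup~\bigl(\E|X^{\epsilon,\delta}(t)-\Xb^\delta(t)|_{L^2}^2\bigr)^{\frac12}\le C_\eta\,\delta^{-\frac{1-\gamy-\aly+\eta}{2}}\,\epsilon^{\frac12},
\]
\[
\underset{t\in[0,T]}\sup~|\E\varphi(X^{\epsilon,\delta}(t))-\E\varphi(\Xb^\delta(t))|\le C_\eta\,\delta^{-(1-\gamy-\aly+\eta)}\,\epsilon,
\]
the explicit powers of $\delta$ coming from the factor $M_{1-\gamma,p}(Q_\delta^{\frac12},T)$ that appears in~\eqref{eq:th_strong_regular}--\eqref{eq:th_weak_regular}.

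It remains to control the approximation errors. The two stochastic convolutions differ by $\int_0^t e^{(t-s)A}(I-e^{\delta A})dW^Q(s)$, and the bound $\|(I-e^{\delta A})(-A)^{-\theta}\|_{\mathcal{L}(L^2,L^2)}\le C\delta^\theta$ for $\theta\in(0,1)$ gives a strong $L^2$ estimate of order $\delta^{\aly-\eta}$ for this residual; a Gronwall argument on the mild formulation, using the Lipschitz continuity of $F$ and $\Fb$, then transfers this rate to the full solutions:
\[
\underset{t\in[0,T]}\sup~\bigl(\E|X^\epsilon(t)-X^{\epsilon,\delta}(t)|_{L^2}^2\bigr)^{\frac12}+\underset{t\in[0,T]}\sup~\bigl(\E|\Xb(t)-\Xb^\delta(t)|_{L^2}^2\bigr)^{\frac12}\le C_\eta\,\delta^{\aly-\eta}.
\]
For the weak approximation error, the leading residual is the centered Gaussian stochastic convolution of $(I-e^{\delta A})dW^Q$, whose covariance has trace-class norm of order $\delta^{2(\aly-\eta)}$; an It\^o/Kolmogorov expansion, comparing $\E\varphi(X^\epsilon(t))$ with $\E\varphi(X^{\epsilon,\delta}(t))$ through the second derivative of the value function of the regularized problem, produces the doubled rate
\[
\underset{t\in[0,T]}\sup~|\E\varphi(X^\epsilon(t))-\E\varphi(X^{\epsilon,\delta}(t))|+\underset{t\in[0,T]}\sup~|\E\varphi(\Xb(t))-\E\varphi(\Xb^\delta(t))|\le C_\eta\,\delta^{2\aly-\eta}.
\]
The triangle inequality with the optimal choice $\delta=\epsilon^{1/(1+\aly-\gamy)}$ then balances the two contributions and yields the announced rates $\epsilon^{\beta_{\rm max}}$ (strong) and $\epsilon^{2\beta_{\rm max}}$ (weak), up to $\epsilon^\eta$ losses which are absorbed by restricting to $\beta<\beta_{\rm max}$.

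The main obstacle is the weak approximation estimate: only the strong rate $\delta^{\aly}$ is a priori available, and the doubling to $\delta^{2\aly}$ requires exploiting the Gaussian nature of the leading residual together with polynomial-in-$\delta$ control of the second derivative of the Kolmogorov function attached to the regularized problem. This is analogous to the classical strong-to-weak order doubling in numerical analysis of SPDEs, and it constitutes the technical core of the proof; all remaining steps reduce to standard semigroup computations and to direct invocations of Theorems~\ref{th:strong_regular}--\ref{th:weak_regular}.
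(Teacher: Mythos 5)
Your proposal follows essentially the same route as the paper's proof: regularize the noise as $e^{\delta A}dW^Q$, apply Theorems~\ref{th:strong_regular} and~\ref{th:weak_regular} to the regularized system with constants blowing up like $\delta^{\alpha+\gamma-1}$ (the paper's Lemma~\ref{lem:regul} and Lemma~\ref{lem:general1}), bound the strong and weak regularization errors by $\delta^{\alpha}$ and $\delta^{2\alpha}$ respectively via Gronwall and an It\^o/Kolmogorov second-derivative argument (Lemma~\ref{lem:general2}), and optimize with $\delta=\epsilon^{1/(1+\alpha-\gamma)}$. The one detail you should make explicit is that ${\rm Tr}(e^{2\delta A}Q)$ also enters the constants of Theorems~\ref{th:strong_regular}--\ref{th:weak_regular} and may be infinite at $\delta=0$, so it must be quantified as $O(\delta^{2\aly-1})$ (the paper does this in Lemma~\ref{lem:regul} via the H\"older inequality for Schatten norms and the hypothesis $\|Q\|_{\mathcal{L}_{\varrho/2}(L^2)}<\infty$); with that, your stated rates go through.
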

Note that in Theorem~\ref{th:general}, the weak order is equal to twice the strong order, as discussed above. The proof of Theorem~\ref{th:general} is postponed to Section~\ref{sec:proof_general}.

\section{Auxiliary regularity results for solutions of the Poisson equation}\label{sec:Poisson}

This section is devoted to the analysis of the Poisson equation below: for any $x\in L^2$ and $\theta\in L^2$, define $\Phi(x,\cdot,\theta):L^2\to\R$ as the unique solution of
\begin{equation}\label{eq:Poisson}
-\Ly \Phi(x,\cdot,\theta)=\langle F(x,\cdot)-\overline{F}(x),\theta\rangle,
\end{equation}
with the condition $\int \Phi(x,\cdot,\theta) d\mu=0$. Observe that $\theta\mapsto \Phi(x,y,\theta)$ is a (possibly unbounded ) linear mapping.

Recall that $\Ly$ is the generator of the Markov process $Y$. It is assumed that Assumption~\ref{ass:Poisson} is satisfied.

The function $\Phi$ plays a key role in the analysis of the error in the averaging principle, both in the strong and in the weak senses. It is straightforward to obtain estimates on $\Phi(x,y,\theta)$, on $D_x\Phi(x,y,\theta).h$ and on $D_x^2\Phi(x,y,\theta).(h_1,h_2)$, in terms on $L^p$ norms of $x,y,\theta,h,h_1,h_2$ (for well-chosen $p$), see Lemmas~\ref{lem:Phi_0},~\ref{lem:Phi_1} and~\ref{lem:Phi_2} below. The main original results in this manuscript are estimates of $\Phi(x,y,\theta)$ in terms of $|(-A)^{-\gamma}\theta|_{L^p}$ (see Lemma~\ref{lem:Phi_0ter}), and of $D_x\Phi(x,y,\theta).h$ in terms of $|(-A)^{-\gamma}h|_{L^p}$ (see Lemma~\ref{lem:Phi_1bis}), for positive $\gamma\in(0,\gamy)$. These two results are specific to the analysis of the averaging principle for parabolic SPDEs, and they allow us to exhibit the trade-off between the regularity properties of the slow and fast processes in the identification of the strong and weak orders of convergence discussed above. These results are consequences of Proposition~\ref{propo:inequalities}.

First, Lemma~\ref{lem:Phi_0} and~\ref{lem:Phi_0ter} deal with estimates of $\Phi(x,y,\theta)$. In particular, note that Lemma~\ref{lem:Phi_0} implies the well-posedness of~\eqref{eq:Poisson}.
\begin{lemma}\label{lem:Phi_0}
Let $p\in[2,\infty)$ and $p'=\frac{p}{p-1}\in(1,2]$. There exists $C_p\in(0,\infty)$, such that for all $x\in L^2$, $y\in L^{p}$ and all $\theta\in L^{p'}$,
\[
|\Phi(x,y,\theta)|\le C_q(1+|y|_{L^{p}})|\theta|_{L^{p'}}.
\]
\end{lemma}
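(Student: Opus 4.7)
The plan is to apply Assumption~\ref{ass:Poisson} to the family of functions $\phi_{x,\theta} : L^2 \to \mathbb{R}$ defined by
\[
\phi_{x,\theta}(y) = \langle F(x,y) - \overline{F}(x), \theta\rangle,
\]
indexed by parameters $x \in L^2$ and $\theta \in L^{p'}$. By construction, the unique centered solution of $-\Ly \psi = \phi_{x,\theta}$ is precisely $\Phi(x,\cdot,\theta)$, so it suffices to verify that $\phi_{x,\theta}$ is admissible in the sense of Assumption~\ref{ass:Poisson} and to track the linear dependence on $|\theta|_{L^{p'}}$ throughout.

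First, I will check admissibility with the choice $q = 2p$. A direct differentiation gives
\[
D_y\phi_{x,\theta}(y).h = \int_{\mathcal{D}} \partial_{z_2}f(x(\xi),y(\xi))\, h(\xi) \theta(\xi)\, d\xi,
\]
with an analogous expression for $D_y^2\phi_{x,\theta}$. Using boundedness of the derivatives of $f$ (Assumption~\ref{ass:f}), Hölder's inequality with exponent pairs $(p,p')$ for the first derivative and triples $(2p,2p,p')$ for the second (valid since $\frac{1}{2p} + \frac{1}{2p} + \frac{1}{p'} = 1$), together with the continuous embedding $L^{2p}(\mathcal{D}) \hookrightarrow L^p(\mathcal{D})$ on the bounded domain $\mathcal{D}$, yields uniform bounds of the form $|D_y\phi_{x,\theta}(y).h| \le C|\theta|_{L^{p'}}|h|_{L^{2p}}$ and $|D_y^2\phi_{x,\theta}(y).(h_1,h_2)| \le C|\theta|_{L^{p'}}|h_1|_{L^{2p}}|h_2|_{L^{2p}}$. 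Thus $\phi_{x,\theta}$ is admissible, with admissibility constants depending linearly on $|\theta|_{L^{p'}}$.

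Second, I will verify the Lipschitz-type hypothesis~\eqref{eq:assPoisson2ass} in the simplest case $\gamma = 0$, $M = 0$. A first-order Taylor expansion followed by Hölder's inequality gives
\[
|\phi_{x,\theta}(y_2) - \phi_{x,\theta}(y_1)| \le \|\partial_{z_2}f\|_{\infty}|y_2 - y_1|_{L^p}|\theta|_{L^{p'}},
\]
so~\eqref{eq:assPoisson2ass} holds with $C(\phi_{x,\theta}) = \|\partial_{z_2}f\|_{\infty}|\theta|_{L^{p'}}$. Applying the conclusion of Assumption~\ref{ass:Poisson} with $\gamma = 0$ and $M = 0$ then yields both existence and uniqueness of the centered solution $\Phi(x,\cdot,\theta)$ and the bound
\[
|\Phi(x,y,\theta)| \le C_{0,p,0}\, C(\phi_{x,\theta})\, (1 + |y|_{L^p}) \le C_p (1 + |y|_{L^p})|\theta|_{L^{p'}},
\]
which is the desired estimate. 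I do not anticipate any substantive obstacle here; the only point requiring attention is threading the factor $|\theta|_{L^{p'}}$ linearly through all the admissibility estimates and through the Lipschitz constant, so that it appears cleanly (and only to the first power) in the final bound.
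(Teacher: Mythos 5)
Your proposal is correct and follows essentially the same route as the paper: verify that $y\mapsto\langle F(x,y)-\Fb(x),\theta\rangle$ is admissible, check the Lipschitz condition~\eqref{eq:assPoisson2ass} with $\gamma=0$, $M=0$ and constant $C(\phi)=C|\theta|_{L^{p'}}$, and invoke~\eqref{eq:assPoisson2} from Assumption~\ref{ass:Poisson}. The only cosmetic difference is your choice $q=2p$ for the admissibility exponent where the paper uses $q=4$ (fixing $\theta\in L^2$ for that step), and both choices are valid.
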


\begin{proof}
For any fixed $x\in L^2$ and $\theta\in L^2\subset L^{p'}$, the mapping $y\mapsto \langle F(x,y)-\overline{F}(x),\theta\rangle$ is an admissible function (with $q=4$). In addition, using Lipschitz continuity of $F$, one has the estimate
\[
|\langle F(x,y_2)-F(x,y_1),\theta\rangle|\le |F(x,y_2)-F(x,y_1)|_{L^{p}}|\theta|_{L^{p'}}\le C|y_2-y_1|_{L^{p}}|\theta|_{L^{p'}}.
\]
This proves that~\eqref{eq:assPoisson2ass} is satisfied, with the parameters $\alpha=0$, $p$, and $M=0$. By Assumption~\ref{ass:Poisson}, then~\eqref{eq:assPoisson2} is satisfied, which concludes the proof of Lemma~\ref{lem:Phi_0}.
\end{proof}

\begin{lemma}\label{lem:Phi_0ter}
Let $\gamma\in(0,\gamy)$. For all $\kappa\in(0,\gamy-\gamma)$, there exists $C_{\gamma,\kappa}\in(0,\infty)$ such that for all $x,y\in L^4$ and $\theta\in L^2$, then
\[
|\Phi(x,y,\theta)|\le C_{\gamma,\kappa}\bigl(1+|(-A)^{\gamma+\kappa}x|_{L^4}^2+|(-A)^{\gamma+\kappa}y|_{L^4}^2\bigr)|(-A)^{-\gamma}\theta|_{L^2}.
\]
\end{lemma}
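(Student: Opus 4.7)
My approach is to use the explicit representation of $\Phi(x,y,\theta)$ provided by Assumption~\ref{ass:Poisson} combined with duality and Proposition~\ref{propo:inequalities}, in the spirit of the proof of Proposition~\ref{propo:Y}, rather than applying the abstract estimate~\eqref{eq:assPoisson2} as was done for Lemma~\ref{lem:Phi_0}. The admissible function of interest is $\phi:y'\mapsto\langle F(x,y')-\overline{F}(x),\theta\rangle$, for which $\int\phi\,d\mu=0$ automatically. Using the representation formula and stationarity of $\mu$ to introduce an auxiliary integration in $z$, one writes
\[
\Phi(x,y,\theta)=\int\int_0^\infty \E\bigl[\langle F(x,Y_y(t))-F(x,Y_z(t)),\theta\rangle\bigr]\,dt\,\mu(dz).
\]

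To produce the factor $|(-A)^{-\gamma}\theta|_{L^2}$ I would use the duality identity $\langle F(x,Y_y(t))-F(x,Y_z(t)),\theta\rangle=\langle (-A)^{\gamma}(F(x,Y_y(t))-F(x,Y_z(t))),(-A)^{-\gamma}\theta\rangle$. Since $\gamma<\gamy\le\tfrac12$ and $\kappa\in(0,\gamy-\gamma)\subset(0,\tfrac12-\gamma)$, Proposition~\ref{propo:inequalities} applies with $\alpha=\gamma$, $p=2$, yielding
\[
|(-A)^{\gamma}(F(x,Y_y(t))-F(x,Y_z(t)))|_{L^2}\le C\bigl(1+|(-A)^{\gamma+\kappa}x|_{L^4}+|(-A)^{\gamma+\kappa}Y_y(t)|_{L^4}+|(-A)^{\gamma+\kappa}Y_z(t)|_{L^4}\bigr)|(-A)^{\gamma+\kappa}(Y_y(t)-Y_z(t))|_{L^4}.
\]

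Then I apply Cauchy--Schwarz in expectation and invoke Assumption~\ref{ass:Y}: the moment bound~\eqref{eq:assY_1} at exponent $\gamma+\kappa<\gamy$ controls $(\E|(-A)^{\gamma+\kappa}Y_y(t)|_{L^4}^2)^{1/2}$ by $C(1+|(-A)^{\gamma+\kappa}y|_{L^4})$ uniformly in $t$, and likewise for $Y_z(t)$; the $t$-integral of $(\E|(-A)^{\gamma+\kappa}(Y_y(t)-Y_z(t))|_{L^4}^2)^{1/2}$ is dominated by $C|(-A)^{\gamma+\kappa}(y-z)|_{L^4}$, which is precisely the integrability condition appearing as the hypothesis of Proposition~\ref{propo:Y}. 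Integrating the remaining $z$-dependence against $\mu$ and using~\eqref{eq:assY_2} to absorb the $|(-A)^{\gamma+\kappa}z|_{L^4}$-moments, one arrives at a bound of the shape
\[
|\Phi(x,y,\theta)|\le C|(-A)^{-\gamma}\theta|_{L^2}\bigl(1+|(-A)^{\gamma+\kappa}x|_{L^4}+|(-A)^{\gamma+\kappa}y|_{L^4}\bigr)\bigl(1+|(-A)^{\gamma+\kappa}y|_{L^4}\bigr).
\]

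The final step is Young's inequality: writing $a=|(-A)^{\gamma+\kappa}x|_{L^4}$ and $b=|(-A)^{\gamma+\kappa}y|_{L^4}$, the elementary estimate $(1+a+b)(1+b)\le C(1+a^2+b^2)$ (via $ab\le (a^2+b^2)/2$) delivers the claimed inequality. The main subtlety I foresee is the form of the dependence on $a$ and $b$: a naive application of~\eqref{eq:assPoisson2} in Assumption~\ref{ass:Poisson}, after absorbing $a$ into the constant $C(\phi)$ and using the Lipschitz-type bound with parameter $M=1$, would yield only $(1+a)(1+b^2)$, which grows like $ab^2$ and is \emph{strictly} larger than $(1+a^2+b^2)$. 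One therefore cannot merely cite~\eqref{eq:assPoisson2}; the sharper control of $(-A)^{\gamma+\kappa}Y_y(t)$ itself must be inserted under the expectation before the time integral is evaluated, which is why the explicit representation of the solution and the integrability of the difference $Y_y(t)-Y_z(t)$ must be used together.
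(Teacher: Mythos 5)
Your argument is correct, but it takes a genuinely different route from the paper's. The two proofs share their first half: both combine the duality bound $|\langle u,\theta\rangle|\le |(-A)^{\gamma}u|_{L^2}\,|(-A)^{-\gamma}\theta|_{L^2}$ with the second inequality of Proposition~\ref{propo:inequalities} (applied with $\alpha=\gamma$, $p=2$, legitimate since $\kappa<\gamy-\gamma\le\tfrac12-\gamma$) to show that $\phi:y'\mapsto\langle F(x,y')-\Fb(x),\theta\rangle$ satisfies the Lipschitz-type condition~\eqref{eq:assPoisson2ass} with parameter $\gamma+\kappa$, $p=4$, $M=1$ and $C(\phi)=C\bigl(1+|(-A)^{\gamma+\kappa}x|_{L^4}\bigr)|(-A)^{-\gamma}\theta|_{L^2}$. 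At that point the paper simply invokes the abstract estimate~\eqref{eq:assPoisson2} of Assumption~\ref{ass:Poisson}, whereas you unwind the representation formula for the solution of the Poisson equation and rerun the computation of Proposition~\ref{propo:Y} by hand. This buys you something and costs you something. The cost: your route needs the integrability hypothesis of Proposition~\ref{propo:Y} on $\int_0^\infty\bigl(\E|(-A)^{\gamma+\kappa}(Y_y(t)-Y_z(t))|_{L^4}^2\bigr)^{1/2}dt$, which in the paper is only a \emph{sufficient} condition for~\eqref{eq:assPoisson2}, not a standing assumption; the paper's proof works under Assumption~\ref{ass:Poisson} alone, so yours is slightly less general. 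The gain: your observation about the polynomial prefactor is accurate. Citing~\eqref{eq:assPoisson2} with $M=1$ literally yields $\bigl(1+|(-A)^{\gamma+\kappa}x|_{L^4}\bigr)\bigl(1+|(-A)^{\gamma+\kappa}y|_{L^4}^2\bigr)$, and with $a=|(-A)^{\gamma+\kappa}x|_{L^4}=b=|(-A)^{\gamma+\kappa}y|_{L^4}$ large this behaves like $b^3$, so it is not dominated by $1+a^2+b^2$; the paper's argument therefore proves the lemma only with a slightly different polynomial weight. Your direct computation recovers the bound exactly as stated, via $(1+a+b)(1+b)\le C(1+a^2+b^2)$. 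In every application of the lemma this distinction is immaterial, since $x$ and $y$ are replaced by processes with bounded moments of all orders, but your version is the one that matches the statement verbatim.
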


\begin{proof}
Observe that
\begin{align*}
|\langle F(x,y_2)-F(x,y_2),\theta\rangle|&\le C_\gamma|(-A)^{\gamma}(F(x,y_2)-F(x,y_1))|_{L^2}|(-A)^{-\gamma}\theta|_{L^2}\\
&\le C_\gamma\bigl(1+|(-A)^{\gamma+\kappa}x|_{L^4}+|(-A)^{\gamma+\kappa}y_1|_{L^4}+|(-A)^{\gamma+\kappa}y_2|_{L^4}\bigr)\\
&\hspace{4cm}|(-A)^{\gamma+\kappa}(y_2-y_1)|_{L^4}|(-A)^{-\gamma}\theta|_{L^2},
\end{align*}
using the third inequality in Proposition~\ref{propo:inequalities}. This proves that~\eqref{eq:assPoisson2ass} is satisfied, thus~\eqref{eq:assPoisson2} follows, and this concludes the proof of Lemma~\ref{lem:Phi_0ter}.
\end{proof}

Lemmas~\ref{lem:Phi_1} and~\ref{lem:Phi_1bis} deal with the first order derivative of $\Phi(x,y,\theta)$ with respect to $x$.

\begin{lemma}\label{lem:Phi_1}
There exists $C\in(0,\infty)$, such that for all $x\in L^2$, $y,\theta,h\in L^4$,
\[
|\langle D_x\Phi(x,y,\theta),h\rangle|\le C(1+|y|_{L^4})\min\Bigl(|\theta|_{L^4}|h|_{L^2},|\theta|_{L^2}|h|_{L^4}\Bigr).
\]
Moreover, for all $x\in L^2$, $y,h\in L^8$, $\theta\in L^{\frac43}$, one has
\[
|\langle D_x\Phi(x,y,\theta),h\rangle|\le C(1+|y|_{L^8})|\theta|_{L^{\frac43}}|h|_{L^8}.
\]
\end{lemma}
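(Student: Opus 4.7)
The starting point is the integral representation of $\Phi$ provided by Assumption~\ref{ass:Poisson}, namely
\[
\Phi(x,y,\theta)=\int_0^\infty \E\bigl[\langle F(x,Y_y(t))-\overline F(x),\theta\rangle\bigr]dt.
\]
Since $Y$ is uncoupled from the slow variable, one may differentiate under the integral in $x$ (dominated convergence, justified via the uniform boundedness of $\partial_{z_1}f$ from Assumption~\ref{ass:f}) to obtain that $\psi_h(y):=\langle D_x\Phi(x,y,\theta),h\rangle$ is the mean-zero solution of the Poisson equation
\[
-\Ly \psi_h=\phi_h,\qquad \phi_h(y):=\langle D_xF(x,y).h-D_x\overline F(x).h,\theta\rangle.
\]
The three claimed estimates will then follow by applying Assumption~\ref{ass:Poisson} to $\phi_h$, once for each bound, after choosing suitable exponents.

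Admissibility of $\phi_h$ is immediate from the Nemytskii structure: pointwise $D_xF(x,y).h=\partial_{z_1}f(x,y)h$, and the Fréchet derivatives of $\phi_h$ in $y$ are controlled in appropriate $L^q$ norms by the boundedness of $\partial^2_{z_1z_2}f$ and $\partial^3_{z_1z_2z_2}f$ granted by Assumption~\ref{ass:f}. The quantitative input is the Lipschitz-type bound~\eqref{eq:assPoisson2ass}. Using a first-order Taylor expansion together with the boundedness of $\partial^2_{z_1z_2}f$ yields the pointwise estimate $|\partial_{z_1}f(x,y_2)-\partial_{z_1}f(x,y_1)|\le C|y_2-y_1|$, hence
\[
|\phi_h(y_2)-\phi_h(y_1)|\le C\int_{\mathcal D}|y_2(\xi)-y_1(\xi)|\,|h(\xi)|\,|\theta(\xi)|\,d\xi.
\]

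At this stage the plan is to apply Hölder's inequality three different ways. Exponents $(4,2,4)$ give a bound by $C|y_2-y_1|_{L^4}|h|_{L^2}|\theta|_{L^4}$, i.e.~\eqref{eq:assPoisson2ass} with $\gamma=0$, $p=4$, $M=0$ and $C(\phi_h)=C|h|_{L^2}|\theta|_{L^4}$. Exponents $(4,4,2)$ give the symmetric bound with $C(\phi_h)=C|h|_{L^4}|\theta|_{L^2}$, still at $p=4$. Finally exponents $(8,8,\tfrac43)$ give $C|y_2-y_1|_{L^8}|h|_{L^8}|\theta|_{L^{4/3}}$, i.e.~\eqref{eq:assPoisson2ass} with $p=8$ and $C(\phi_h)=C|h|_{L^8}|\theta|_{L^{4/3}}$. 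The estimate~\eqref{eq:assPoisson2} then delivers, respectively, the three bounds $|\psi_h|\le C(1+|y|_{L^4})|h|_{L^2}|\theta|_{L^4}$, $|\psi_h|\le C(1+|y|_{L^4})|h|_{L^4}|\theta|_{L^2}$, and $|\psi_h|\le C(1+|y|_{L^8})|h|_{L^8}|\theta|_{L^{4/3}}$, which together are exactly the statement of Lemma~\ref{lem:Phi_1}.

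There is no real obstacle here: the only subtle point is the legitimacy of differentiating the integral representation in $x$, which is routine because the system is uncoupled and $\partial_{z_1}f$ is bounded; once this is granted, the proof is a matter of matching Hölder exponents to the three target estimates. This is why Lemma~\ref{lem:Phi_1} is elementary compared to Lemma~\ref{lem:Phi_1bis}, which requires the more delicate estimates from Proposition~\ref{propo:inequalities} in order to accommodate negative fractional powers of $-A$ on $h$.
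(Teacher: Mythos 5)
Your proof is correct and follows essentially the same route as the paper: identify $\langle D_x\Phi(x,\cdot,\theta),h\rangle$ as the solution of the Poisson equation with right-hand side $\langle \partial_{z_1}f(x,\cdot)h-D_x\overline F(\cdot).h,\theta\rangle$, verify the Lipschitz condition~\eqref{eq:assPoisson2ass} via the boundedness of $\partial^2_{z_1z_2}f$ and three choices of H\"older exponents, and conclude by~\eqref{eq:assPoisson2}. Your explicit justification of differentiating the integral representation in $x$ is a point the paper leaves implicit, but it is not a different argument.
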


\begin{proof}
For all $x,h\in L^2$, $\theta\in L^4$, the function $y\mapsto \langle D_x\Phi(x,y,\theta),h\rangle$ solves the Poisson equation
\[
-\Ly\bigl(D_x\Phi(x,\cdot,\theta).h\bigr)=\phi_{x,\theta,h}
\]
where $\phi_{x,\theta,h}(y)=\langle D_x\bigl(F(x,\cdot)-\Fb(x)\bigr).h,\theta\rangle$. It is straightforward to check that $\phi_{x,\theta,h}$ is an admissible function (by Assumption~\ref{ass:f}, $f$ is of class $\mathcal{C}^3$ with bounded derivatives), with $q=8$.

Let $x,h\in L^2$ and $\theta\in L^4$, then for all $y_1,y_2\in L^4$, one has
\begin{align*}
|\phi_{x,\theta,h}(y_2)-&\phi_{x,\theta,h}(y_1)|=\big|\langle D_x\bigl(F(x,y_2)-F(x,y_1)\bigr).h,\theta\rangle\big|\\
&=\big|\langle \bigl(\partial_{z_1}f(x,y_2)-\partial_{z_1} f(x,y_1)\bigr)h,\theta\rangle\big|\\
&=\big|\langle \bigl(\partial_{z_1}f(x,y_2)-\partial_{z_1} f(x,y_1)\bigr)\theta,h\rangle\big|\\
&\le C|y_2-y_1|_{L^4}\min\Bigl(|\theta|_{L^4}|h|_{L^2},|\theta|_{L^2}|h|_{L^4}\Bigr),
\end{align*}
using H\"older inequality and by boundedness of the first-order partial derivative $\partial_{z_1}f(z_1,z_2)$.

Alternatively,
\[
|\phi_{x,\theta,h}(y_2)-\phi_{x,\theta,h}(y_1)|\le C|y_2-y_1|_{L^8}|\theta_{L^{\frac43}}|h|_{L^8}.
\]
Thus~\eqref{eq:assPoisson2ass}, and consequently~\eqref{eq:assPoisson2}, are satisfied, for an appropriate choice of the parameters. This concludes the proof of Lemma~\ref{lem:Phi_1}.
\end{proof}

\begin{lemma}\label{lem:Phi_1bis}
Let $\gamma\in(0,\gamy)$. For all $\kappa\in(0,\gamy-\gamma)$, there exists $C_{\gamma,\kappa}\in(0,\infty)$ such that for all $x,y,\theta\in H$, then
\begin{align*}
|\langle D_x\Phi(x,y,\theta),h\rangle|\le C_{\gamma,\kappa}&\bigl(1+|(-A)^{\gamma+\kappa}x|_{L^8}^2+|(-A)^{\gamma+\kappa}y|_{L^8}^2\bigr)\\
&\min\Bigl(|(-A)^{\gamma+\frac{\kappa}{2}}\theta|_{L^4}|(-A)^{-\gamma}h|_{L^2},|(-A)^{\gamma+\frac{\kappa}{2}}\theta|_{L^2}|(-A)^{-\gamma}h|_{L^4}\Bigr).
\end{align*}
\end{lemma}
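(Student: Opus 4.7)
The function $y\mapsto \langle D_x\Phi(x,y,\theta),h\rangle$ satisfies the Poisson equation $-\Ly\psi=\phi_{x,\theta,h}$, where
\[
\phi_{x,\theta,h}(y)=\langle D_xF(x,y).h-D_x\Fb(x).h,\theta\rangle,
\]
obtained by formally differentiating~\eqref{eq:Poisson} in $x$; this is licit since $f$ is of class $\mathcal{C}^\ir$ (Assumption~\ref{ass:f}), which also guarantees that $\phi_{x,\theta,h}$ is admissible in the sense of Assumption~\ref{ass:Poisson}. The two bounds inside the min will be obtained by two separate applications of Assumption~\ref{ass:Poisson}, each with its own Lipschitz estimate~\eqref{eq:assPoisson2ass} for $\phi_{x,\theta,h}$. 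Setting $u:=\partial_{z_1}f(x,y_2)-\partial_{z_1}f(x,y_1)$, so that $\phi_{x,\theta,h}(y_2)-\phi_{x,\theta,h}(y_1)=\langle u\theta,h\rangle$, both estimates will share the bound on $u$ obtained from the second inequality of Proposition~\ref{propo:inequalities} (applied to $\phi=\partial_{z_1}f$ with $\alpha=\gamma+\kappa/2$, $\kappa'=\kappa/2$, $p=4$):
\[
|(-A)^{\gamma+\kappa/2}u|_{L^4}\le C\bigl(1+|(-A)^{\gamma+\kappa}x|_{L^8}+\sum_{j=1,2}|(-A)^{\gamma+\kappa}y_j|_{L^8}\bigr)|(-A)^{\gamma+\kappa}(y_2-y_1)|_{L^8}.
\]

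For the first bound in the min, I would pair the test functions in the $L^2$-$L^2$ duality:
\[
|\langle u\theta,h\rangle|=|\langle(-A)^\gamma(u\theta),(-A)^{-\gamma}h\rangle|\le|(-A)^\gamma(u\theta)|_{L^2}|(-A)^{-\gamma}h|_{L^2},
\]
and then invoke the first inequality of Proposition~\ref{propo:inequalities} with $\alpha=\gamma$, $\kappa'=\kappa/2$, $p=2$ to obtain $|(-A)^\gamma(u\theta)|_{L^2}\le C|(-A)^{\gamma+\kappa/2}u|_{L^4}|(-A)^{\gamma+\kappa/2}\theta|_{L^4}$. Combined with the bound on $u$ above, this produces a Lipschitz estimate~\eqref{eq:assPoisson2ass} for $\phi_{x,\theta,h}$ with parameters $\gamma'=\gamma+\kappa\in(0,\gamy)$, $p'=8$, $M=1$, and Lipschitz constant proportional to $(1+|(-A)^{\gamma+\kappa}x|_{L^8})|(-A)^{\gamma+\kappa/2}\theta|_{L^4}|(-A)^{-\gamma}h|_{L^2}$. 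Assumption~\ref{ass:Poisson} then delivers the first bound after collecting the $x$- and $y$-dependent factors by elementary algebraic manipulations.

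For the second bound, I would use the $L^{4/3}$-$L^4$ duality pairing instead:
\[
|\langle u\theta,h\rangle|\le|(-A)^\gamma(u\theta)|_{L^{4/3}}|(-A)^{-\gamma}h|_{L^4},
\]
and bound the first factor via an asymmetric product rule with Hölder exponents $L^4\times L^2\to L^{4/3}$:
\[
|(-A)^\gamma(u\theta)|_{L^{4/3}}\le C|(-A)^{\gamma+\kappa/2}u|_{L^4}|(-A)^{\gamma+\kappa/2}\theta|_{L^2}.
\]
This is a Kato--Ponce type estimate for fractional powers of $-A$ on the bounded domain $\mathcal{D}$, which extends the symmetric case covered by Proposition~\ref{propo:inequalities} (cf.~\cite{Triebel} and the references therein). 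The rest of the argument is identical to the first case.

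The main obstacle is the asymmetric product rule in $L^{4/3}$ required for the second bound: Proposition~\ref{propo:inequalities} is formulated only for symmetric $L^{2p}$ factors with $p\ge 2$, and therefore does not supply it directly. Apart from this, the derivation is routine once one checks that all fractional indices stay within the admissible range $[0,\tfrac12)$, which is ensured by $\kappa\in(0,\gamy-\gamma)$ and $\gamy\le\tfrac12$.
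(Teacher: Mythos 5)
Your argument follows the paper's proof essentially step for step: differentiate the Poisson equation in $x$ (as in Lemma~\ref{lem:Phi_1}), write $\phi_{x,\theta,h}(y_2)-\phi_{x,\theta,h}(y_1)=\langle u\theta,h\rangle$ with $u=\partial_{z_1}f(x,y_2)-\partial_{z_1}f(x,y_1)$, control $|(-A)^{\gamma+\kappa/2}u|_{L^4}$ via the second inequality of Proposition~\ref{propo:inequalities}, pair the remaining factors by duality together with the fractional product rule, and conclude with Assumption~\ref{ass:Poisson}. The obstacle you single out is the only substantive point: for the second bound in the $\min$ one needs an asymmetric estimate such as $|(-A)^\gamma(u\theta)|_{L^{4/3}}\le C|(-A)^{\gamma+\kappa/2}u|_{L^4}|(-A)^{\gamma+\kappa/2}\theta|_{L^2}$ (or the equivalent dual estimate with negative smoothness on $h$), whereas Proposition~\ref{propo:inequalities} is stated only for $L^p$ with $p\ge2$ and two symmetric $L^{2p}$ factors. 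Be aware that the paper's own proof is silent on exactly this point --- it invokes only ``H\"older inequality and the first inequality in Proposition~\ref{propo:inequalities}'' --- so you have not introduced a new gap but rather made an implicit one visible; the required asymmetric bound is a standard fractional Leibniz/paraproduct estimate on $\mathcal{D}$ and is available from the references cited for Proposition~\ref{propo:inequalities} (\cite{BD}, \cite{Triebel}). Modulo citing that extension, your proof is correct and, if anything, more explicit than the one in the paper.
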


\begin{proof}
Let $x,\theta,h$ be fixed. Proceeding as in the proof of Lemma~\ref{lem:Phi_1}, for all $y_1,y_2\in H$,
\begin{align*}
|\phi_{x,\theta,h}(y_2)-&\phi_{x,\theta,h}(y_1)|=\big|\langle \bigl(\partial_xf(x,y_2)-\partial_x f(x,y_1)\bigr)h,\theta\rangle\big|=\big|\langle \bigl(\partial_xf(x,y_2)-\partial_x f(x,y_1)\bigr)\theta,h\rangle\big|,
\end{align*}
thus, thanks to H\"older inequality and to the first inequality in Proposition~\ref{propo:inequalities}, it is sufficient to consider
\begin{align*}
|(-A)^{\gamma+\frac{\kappa}{2}}\bigl(\partial_xf(x,y_2)-\partial_x f(x,y_1)\bigr)|_{L^4}\le C_{\gamma,\kappa}(1+|(-A)^{\gamma+\kappa}y_1|_{L^8}+|(-A)^{\gamma+\kappa}y_2|_{L^8})|(-A)^{\gamma+\kappa}(y_2-y_1)|_{L^8},
\end{align*}
thanks to the second inequality of Proposition~\ref{propo:inequalities}.
It remains to use Assumption~\ref{ass:Poisson} to conclude the proof of Lemma~\ref{lem:Phi_1}.
\end{proof}

Finally, it remains to state and prove a result, Lemma~\ref{lem:Phi_2}, concerning the second order derivative.
\begin{lemma}\label{lem:Phi_2}
There exists $C\in(0,\infty)$ such that for all $x\in L^2$ $y,\theta\in L^4$ and $h_1,h_2\in L^8$,
\[
|D_{x}^2\Phi(x,y,\theta).(h_1,h_2)|\le C(1+|y|_{L^4})\min\Bigl(|\theta|_{L^4}|h_1|_{L^4}|h_2|_{L^4},|\theta|_{L^2}|h_1|_{L^8}|h_2|_{L^8}\Bigr).
\]
\end{lemma}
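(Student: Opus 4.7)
The plan is to apply Assumption~\ref{ass:Poisson} to the Poisson equation satisfied by the second order derivative $D_x^2\Phi(x,\cdot,\theta).(h_1,h_2)$, following the same pattern used in the proofs of Lemmas~\ref{lem:Phi_1} and~\ref{lem:Phi_1bis}. Formally differentiating~\eqref{eq:Poisson} twice in $x$, for fixed $x\in L^2$, $\theta\in L^4$, and $h_1,h_2\in L^8$, the function $y\mapsto \psi_{x,\theta,h_1,h_2}(y):=D_x^2\Phi(x,y,\theta).(h_1,h_2)$ should solve
\[
-\Ly \psi_{x,\theta,h_1,h_2}=\phi_{x,\theta,h_1,h_2},
\]
where, using that $F$ is the Nemytskii operator associated with $f$,
\[
\phi_{x,\theta,h_1,h_2}(y)=\langle \partial_{z_1}^2 f(x,y)h_1 h_2,\theta\rangle-\int \langle \partial_{z_1}^2 f(x,z)h_1 h_2,\theta\rangle\,\mu(dz).
\]
Since $f\in\mathcal{C}^{\ir}$ with bounded derivatives of order $1,\ldots,\ir=4$ by Assumption~\ref{ass:f}, the function $\phi_{x,\theta,h_1,h_2}$ is twice Fr\'echet differentiable on $L^q$ for $q=8$, with bounds of the required form, so it is admissible in the sense of Assumption~\ref{ass:Poisson}.

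The key step is to verify the Lipschitz-type estimate~\eqref{eq:assPoisson2ass} with $\gamma=0$ and $M=0$. For any $y_1,y_2\in L^4$,
\[
\phi_{x,\theta,h_1,h_2}(y_2)-\phi_{x,\theta,h_1,h_2}(y_1)=\langle \bigl(\partial_{z_1}^2f(x,y_2)-\partial_{z_1}^2 f(x,y_1)\bigr)h_1 h_2,\theta\rangle-\int(\ldots)\mu(dz),
\]
and the integral term vanishes in the Lipschitz estimate (it does not depend on $y_1,y_2$). Since $\partial_{z_1}^2 f$ is globally Lipschitz in $z_2$ (thanks to boundedness of $\partial_{z_1}^2\partial_{z_2}f$, which exists by Assumption~\ref{ass:f} since $\ir=4$), the pointwise estimate
\[
|\partial_{z_1}^2f(x(\xi),y_2(\xi))-\partial_{z_1}^2f(x(\xi),y_1(\xi))|\le C|y_2(\xi)-y_1(\xi)|
\]
holds. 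Applying H\"older's inequality with exponents summing to $1$ then yields, on the one hand with $\frac14+\frac14+\frac14+\frac14=1$,
\[
|\phi_{x,\theta,h_1,h_2}(y_2)-\phi_{x,\theta,h_1,h_2}(y_1)|\le C|y_2-y_1|_{L^4}|h_1|_{L^4}|h_2|_{L^4}|\theta|_{L^4},
\]
and on the other hand with $\frac14+\frac18+\frac18+\frac12=1$,
\[
|\phi_{x,\theta,h_1,h_2}(y_2)-\phi_{x,\theta,h_1,h_2}(y_1)|\le C|y_2-y_1|_{L^4}|h_1|_{L^8}|h_2|_{L^8}|\theta|_{L^2}.
\]
In each case, this verifies~\eqref{eq:assPoisson2ass} with $p=4$, $\gamma=0$, $M=0$, and with $C(\phi)$ equal to the corresponding product of norms of $h_1,h_2,\theta$.

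By Assumption~\ref{ass:Poisson}, the solution $\psi_{x,\theta,h_1,h_2}$ satisfies~\eqref{eq:assPoisson2} with the same parameters, yielding
\[
|D_x^2\Phi(x,y,\theta).(h_1,h_2)|\le C(1+|y|_{L^4})\min\Bigl(|\theta|_{L^4}|h_1|_{L^4}|h_2|_{L^4},|\theta|_{L^2}|h_1|_{L^8}|h_2|_{L^8}\Bigr),
\]
which is the desired estimate. The main technical point is the rigorous justification that the second Fr\'echet derivative $D_x^2\Phi(x,y,\theta).(h_1,h_2)$ indeed exists and solves the claimed Poisson equation; this follows by differentiating the probabilistic representation $\Phi(x,y,\theta)=\int_0^\infty \E[\langle F(x,Y_y(t))-\Fb(x),\theta\rangle]dt$ from Assumption~\ref{ass:Poisson} twice in $x$ under the integral and expectation, with differentiation justified by dominated convergence using the uniform-in-$t$ moment estimates from Assumption~\ref{ass:Y} and the pointwise bounds on the derivatives of $f$. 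The remainder of the proof is a routine H\"older-type computation as outlined above.
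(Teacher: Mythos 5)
Your proposal is correct and follows essentially the same route as the paper: identify the Poisson equation solved by $y\mapsto D_x^2\Phi(x,y,\theta).(h_1,h_2)$ with source $\langle \partial_{z_1}^2 f(x,\cdot)h_1h_2,\theta\rangle$ (up to its $\mu$-average), check admissibility via Assumption~\ref{ass:f}, verify~\eqref{eq:assPoisson2ass} with $\gamma=0$, $M=0$ using boundedness of the third-order derivatives of $f$ and H\"older's inequality with the two exponent splittings you give, and conclude by~\eqref{eq:assPoisson2}. The paper's proof is exactly this, stated more tersely; your additional remarks on justifying the differentiation under the probabilistic representation are consistent with (though not spelled out in) the paper.
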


\begin{proof}
For all $x$, $\theta$, $h_1,h_2$, the function $y\mapsto D_x^2\Phi(x,y,\theta).(h_1,h_2)$ solves the Poisson equation
\[
-\Ly\bigl(D_x^2\Phi(x,\cdot,\theta).(h_1,h_2)\bigr)=\phi_{x,\theta,h_1,h_2}^{(2)}
\]
where $\phi_{x,\theta,h_1,h_2}^{(2)}(y)=\langle D_x^2\bigl(F(x,\cdot)-\Fb(x)\bigr).(h_1,h_2),\theta\rangle$. It is straightforward to check that $\phi_{x,\theta,h}^{(2)}$ is an admissible function (thanks to Assumption~\ref{ass:f}, $f$ is of class $\mathcal{C}^4$ with bounded derivatives of order $1,\ldots,4$).

For all $y_1,y_2\in H$, using boundedness of the third-order derivative $\partial_x^{(3)}f$ and H\"older inequality, one obtains
\begin{align*}
|\phi_{x,\theta,h_1,h_2}^{(2)}(y_2)-\phi_{x,\theta,h_1,h_2}^{(2)}(y_1)|&\le C|y_2-y_1|_{L^4}\min\Bigl(|\theta|_{L^4}|h_1|_{L^4}|h_2|_{L^4},|\theta|_{L^2}|h_1|_{L^8}|h_2|_{L^8}\Bigr).
\end{align*}
Thus it remains to apply Assumtion~\ref{ass:Poisson} to conclude the proof of Lemma~\ref{lem:Phi_2}.
\end{proof}

\section{Proof of Theorem~\ref{th:strong_regular}}\label{sec:proof_strong_regular}

The goal of this section is to provide a proof of Theorem~\ref{th:strong_regular}, {\it i.e.} that under Assumption~\ref{ass:regular}, the strong order of convergence in the averaging principle is equal to $\frac12$.

Let $T\in(0,\infty)$. Thanks to Assumption~\ref{ass:regular}, let also $\gamma\in(1-\aly,\gamy)$, $\kappa\in(0,\gamy-\gamma)$, and let the initial conditions $x_0$ and $y_0$ satisfy $|(-A)^{1-\gamma}x_0|_{L^8}+|(-A)^{\gamma+\kappa}y_0|_{L^8}<\infty$.

Introduce the auxiliary function $\delta F(x,y)=F(x,y)-\overline{F}(x)$. Thanks to the mild formulations~\eqref{eq:SPDE_mild} and~\eqref{eq:av_mild}, the following decomposition of the averaging error is obtained:
\begin{align*}
X^\epsilon(t)-\Xb(t)&=\int_{0}^{t}e^{(t-s)A}\bigl(F(X^\epsilon(s),Y^\epsilon(s))-F(\Xb(s),Y^\epsilon(s))\bigr)ds\\
&~+\int_{0}^{t}e^{(t-s)A}\delta F(\Xb(s),Y^\epsilon(s))ds.
\end{align*}
Recall that $F$ is globally Lipschitz-continuous, thanks to Assumption~\ref{ass:f}. The mean-square error is then bounded from above as follows:
\begin{align*}
\E\big|X^\epsilon(t)-\Xb(t)|^2&\le CT\int_{0}^{t}\E\big|X^\epsilon(s)-\Xb(s)\big|^2ds\\
&~+2\int_{0}^{t}\int_{s}^{t}\E\Bigl[\langle e^{(t-s)A}\delta F(\Xb(s),Y^\epsilon(s)),e^{(t-r)A}\delta F(\Xb(r),Y^\epsilon(r))\rangle\Bigr] dr ds.
\end{align*}

Let $\theta_{s,t}(r)=e^{(2t-s-r)A}\delta F(\Xb(s),Y^\epsilon(s))$. Observe that $\partial_r \theta_{s,t}(r)=-A\theta_{s,t}(r)$. Using the definition~\eqref{eq:Poisson} of $\Phi$, considering the quantity~$\E\Phi(\Xb(t),Y^\epsilon(t),\theta_{s,t}(t))-\E\Phi(\Xb(s),Y^\epsilon(s),\theta_{s,t}(s))$, and applying It\^o formula, one obtains
\begin{align*}
\int_{s}^{t}\E\Bigl[\langle e^{(t-s)A}\delta F(\Xb(s),Y^\epsilon(s)),&e^{(t-r)A}\delta F(\Xb(r),Y^\epsilon(r))\rangle\Bigr] dr\\
&=\int_{s}^{t}\E\bigl[-\Ly\Phi\bigl(\Xb(r),Y^\epsilon(r),\theta_{s,t}(r)\bigr)\bigr]dr\\
&=\mathcal{I}_1^\epsilon(s,t)+\mathcal{I}_2^\epsilon(s,t)+\mathcal{I}_3^\epsilon(s,t),
\end{align*}
where
\begin{align}
\mathcal{I}_1^\epsilon(s,t)&=\epsilon \E\Phi(\Xb(s),Y^\epsilon(s),\theta_{s,t}(s))-\epsilon \E\Phi(\Xb(t),Y^\epsilon(t),\theta_{s,t}(t))\\
\mathcal{I}_2^\epsilon(s,t)&=-\epsilon\int_{s}^{t}\E\Bigl[\Phi\bigl(\Xb(r),Y^\epsilon(r),A\theta_{s,t}(r)\bigr)\Bigr]dr,\\\mathcal{I}_3^\epsilon(s,t)&=\epsilon \int_{s}^{t}\E\Bigl[\overline{\mathcal{L}}\Phi\bigl(\Xb(r),Y^\epsilon(r),\theta_{s,t}(r)\bigr)\Bigr]dr,
\end{align}
where $\overline{\mathcal{L}}$ is the infinitesimal generator associated with the averaged equation~\eqref{eq:av}, see~\eqref{eq:gen_av}.

For future use, a more detailed decomposition of the third term is introduced: $\mathcal{I}_3^\epsilon(s,t)=\mathcal{I}_{3,1}^\epsilon(s,t)+\mathcal{I}_{3,2}^\epsilon(s,t)+\mathcal{I}_{3,3}^\epsilon(s,t)$, with
\begin{align}
\mathcal{I}_{3,1}^\epsilon(s,t)&=\epsilon\int_{s}^{t}\E\langle \Fb(\Xb(r)),D_x\Phi\bigl(\Xb(r),Y^\epsilon(r),\theta_{s,t}(r)\bigr)\rangle dr\\
\mathcal{I}_{3,2}^\epsilon(s,t)&=\epsilon\int_{s}^{t}\E\langle A\Xb(r),D_x\Phi\bigl(\Xb(r),Y^\epsilon(r),\theta_{s,t}(r)\bigr)\rangle dr\\
\mathcal{I}_{3,3}^\epsilon(s,t)&=\frac{\epsilon}{2}\int_{s}^{t}\E{\rm Tr}\Bigl(QD_x^2\Phi\bigl(\Xb(r),Y^\epsilon(r),\theta_{s,t}(r)\bigr)\Bigr) dr
\end{align}

Lemmas~\ref{lem:I1},~\ref{lem:I2} and~\ref{lem:I3} below state the necessary estimates in order to conclude the analysis of the strong error. Observe that Assumption~\ref{ass:regular} is only used effectively in Lemma~\ref{lem:I3}.

\begin{lemma}\label{lem:I1}
There exists $C(T)\in(0,\infty)$, such that, for all $\epsilon\in(0,1)$,
\[
\underset{0\le s\le t\le T}\sup~|\mathcal{I}_1^\epsilon(s,t)|\le C(T)\epsilon(1+|x_0|_{L^2}^2+|y_0|_{L^2}^2).
\]
\end{lemma}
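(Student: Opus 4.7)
The plan is to bound the two terms in the definition
\[
\mathcal{I}_1^\epsilon(s,t)=\epsilon \E\Phi(\Xb(s),Y^\epsilon(s),\theta_{s,t}(s))-\epsilon \E\Phi(\Xb(t),Y^\epsilon(t),\theta_{s,t}(t))
\]
separately, using only the coarsest pointwise estimate on $\Phi$ available, namely Lemma~\ref{lem:Phi_0} with $p=p'=2$. This gives
\[
|\Phi(x,y,\theta)|\le C(1+|y|_{L^2})|\theta|_{L^2}
\]
for every $x\in L^2$, $y\in L^2$, $\theta\in L^2$, with $C$ independent of $x$.

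First I would control the $L^2$-norm of $\theta_{s,t}(r)=e^{(2t-s-r)A}\delta F(\Xb(s),Y^\epsilon(s))$ at the two endpoints $r=s$ and $r=t$. Since $\bigl(e^{tA}\bigr)_{t\ge 0}$ is a contraction semigroup on $L^2$, for both choices of $r$ one has
\[
|\theta_{s,t}(r)|_{L^2}\le |\delta F(\Xb(s),Y^\epsilon(s))|_{L^2}.
\]
Global Lipschitz continuity of $f$ (Assumption~\ref{ass:f}) and the finite first moment of $\mu$ (Assumption~\ref{ass:Y}) imply that $\delta F=F-\Fb$ grows at most linearly,
\[
|\delta F(x,y)|_{L^2}\le C\bigl(1+|x|_{L^2}+|y|_{L^2}\bigr).
\]

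Combining these two bounds yields, for $r\in\{s,t\}$,
\[
|\Phi(\Xb(r),Y^\epsilon(r),\theta_{s,t}(r))|\le C\bigl(1+|Y^\epsilon(r)|_{L^2}\bigr)\bigl(1+|\Xb(s)|_{L^2}+|Y^\epsilon(s)|_{L^2}\bigr).
\]
Applying the Cauchy--Schwarz inequality to the expectation and invoking the uniform (in $\epsilon\in(0,1)$ and $t\in[0,T]$) second-moment estimates provided by Proposition~\ref{propo:well_av} for $\Xb$ (where ${\rm Tr}(Q)<\infty$ under Assumption~\ref{ass:regular} ensures $M_{0,2}(Q^\frac12,T)<\infty$) and by Assumption~\ref{ass:Y} for $Y^\epsilon(t)=Y(t/\epsilon)$, we obtain
\[
\underset{r\in\{s,t\}}\sup~\E|\Phi(\Xb(r),Y^\epsilon(r),\theta_{s,t}(r))|\le C(T)\bigl(1+|x_0|_{L^2}^2+|y_0|_{L^2}^2\bigr).
\]
Multiplying by $\epsilon$ and taking the supremum over $0\le s\le t\le T$ produces the asserted inequality. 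No real obstacle is expected: this first lemma is the easy one, as it does not require the refined regularity estimates of Section~\ref{sec:Poisson} nor any smoothing from the semigroup; the restriction $\aly+\gamy>1$ from Assumption~\ref{ass:regular} is only used through the crude fact that ${\rm Tr}(Q)<\infty$ to get second moments of $\Xb$.
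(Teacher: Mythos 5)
Your proposal is correct and follows essentially the same route as the paper: bound $|\theta_{s,t}(r)|_{L^2}$ at $r\in\{s,t\}$ via the contraction property of the semigroup and the linear growth of $\delta F$, apply Lemma~\ref{lem:Phi_0} with $p=2$, and conclude with Cauchy--Schwarz and the moment estimates of Proposition~\ref{propo:well_av} and Assumption~\ref{ass:Y}. The only (inessential) quibble is your closing remark: ${\rm Tr}(Q)<\infty$ is not actually needed here, since $M_{0,2}(Q^{\frac12},T)<\infty$ already follows from Assumption~\ref{ass:Q} with $\alpha=0<\aly$; the paper indeed notes that Assumption~\ref{ass:regular} is only used effectively in Lemma~\ref{lem:I3}.
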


\begin{lemma}\label{lem:I2}
There exists $C(T)\in(0,\infty)$, such that, for all $\epsilon\in(0,1)$,
\[
\underset{0\le s<t\le T}\sup~(t-s)^{\frac12}|\mathcal{I}_2^\epsilon(s,t)|\le C(T)\epsilon(1+|x_0|_{L^2}^2+|y_0|_{L^2}^2).
\]
\end{lemma}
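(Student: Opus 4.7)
The plan is to estimate the integrand of $\mathcal{I}_2^\epsilon(s,t)$ pointwise via Lemma~\ref{lem:Phi_0ter} and then take expectation. The key observation is that since $\theta_{s,t}(r)=e^{(2t-s-r)A}\delta F(\Xb(s),Y^\epsilon(s))$, one rewrites
\[
(-A)^{-\gamma}A\theta_{s,t}(r)=-(-A)^{1-\gamma}e^{(2t-s-r)A}\delta F(\Xb(s),Y^\epsilon(s)),
\]
so that the analytic-semigroup smoothing bound $\|(-A)^{1-\gamma}e^{uA}\|_{\mathcal{L}(L^2)}\le Cu^{-(1-\gamma)}$, combined with the Lipschitz estimate $|\delta F(x,y)|_{L^2}\le C(1+|x|_{L^2}+|y|_{L^2})$, yields
\[
|(-A)^{-\gamma}A\theta_{s,t}(r)|_{L^2}\le C(2t-s-r)^{-(1-\gamma)}\bigl(1+|\Xb(s)|_{L^2}+|Y^\epsilon(s)|_{L^2}\bigr).
\]

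Next I would apply Lemma~\ref{lem:Phi_0ter} with $x=\Xb(r)$, $y=Y^\epsilon(r)$, $\theta=A\theta_{s,t}(r)$, giving
\[
|\Phi(\Xb(r),Y^\epsilon(r),A\theta_{s,t}(r))|\le C_{\gamma,\kappa}\bigl(1+|(-A)^{\gamma+\kappa}\Xb(r)|_{L^4}^2+|(-A)^{\gamma+\kappa}Y^\epsilon(r)|_{L^4}^2\bigr)\,|(-A)^{-\gamma}A\theta_{s,t}(r)|_{L^2}.
\]
Taking expectation and applying Cauchy--Schwarz, the factor coming from the spatial-regularity moments of $\Xb(r)$ and $Y^\epsilon(r)$ is bounded uniformly in $r\in[0,T]$ and $\epsilon\in(0,1)$ by the moment estimates~\eqref{eq:moment_av} in Proposition~\ref{propo:well_av} (applicable because $\gamma+\kappa<\gamy\le \aly$ in the regular case, and $|(-A)^{\gamma+\kappa}x_0|_{L^8}$ is controlled by $|(-A)^{1-\gamma}x_0|_{L^8}$ since $\gamma+\kappa<1-\gamma$) together with~\eqref{eq:assY_1} from Assumption~\ref{ass:Y}. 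The $L^2$-moment of $1+|\Xb(s)|_{L^2}+|Y^\epsilon(s)|_{L^2}$ is likewise bounded by $C(1+|x_0|_{L^2}^2+|y_0|_{L^2}^2)^{1/2}$.

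It then remains to integrate the singular kernel:
\[
\int_s^t (2t-s-r)^{-(1-\gamma)}\,dr=\int_{t-s}^{2(t-s)}u^{-(1-\gamma)}\,du=\frac{2^\gamma-1}{\gamma}(t-s)^\gamma,
\]
which is finite precisely because $1-\gamma<1$. Combining these estimates gives
\[
|\mathcal{I}_2^\epsilon(s,t)|\le C(T)\,\epsilon\,(t-s)^\gamma\bigl(1+|x_0|_{L^2}^2+|y_0|_{L^2}^2\bigr),
\]
and multiplying by $(t-s)^{1/2}$ yields a power $(t-s)^{1/2+\gamma}$, which is bounded by $T^{1/2+\gamma}$ since $\gamma<\gamy\le 1/2$, thus absorbing the time factor into $C(T)$ and concluding.

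The main technical point is the compensation between the unbounded factor $A$ acting on $\theta_{s,t}(r)$ and the $(-A)^{-\gamma}$ regularization afforded by Lemma~\ref{lem:Phi_0ter}: only $(-A)^{1-\gamma}$ ends up hitting the semigroup, producing an integrable singularity of order $1-\gamma<1$. This is exactly where the positive regularity $\gamy>0$ of the fast process is used — without it one would have no gain at all and the integral would diverge. The remaining verifications (uniformity of the moment bounds in $\epsilon$, compatibility of the parameter choices $\gamma\in(1-\aly,\gamy)$ and $\kappa\in(0,\gamy-\gamma)$ with the regularity of the initial data, and the trivial bound $(1+a^2)^{1/2}\le 3(1+a^2)$ to replace square roots by quadratic moments) are routine.
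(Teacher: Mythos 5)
Your chain of inequalities is valid, but it takes a genuinely different (and heavier) route than the paper, and it does not quite deliver the lemma as stated. The paper's proof uses only the crude estimate of Lemma~\ref{lem:Phi_0} with $p=p'=2$, namely $|\Phi(x,y,\theta)|\le C(1+|y|_{L^2})|\theta|_{L^2}$, together with the observation that the semigroup argument $2t-s-r$ stays bounded below by $t-s>0$ for $r\in(s,t)$, so that $\int_s^t\|Ae^{(2t-s-r)A}\|_{\mathcal{L}(L^2,L^2)}\,dr\le C(T)(t-s)^{-1/2}$ with no regularization from the Poisson equation at all. This is why the stated constant involves only $|x_0|_{L^2}$ and $|y_0|_{L^2}$. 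Your use of Lemma~\ref{lem:Phi_0ter} instead forces the prefactor $1+|(-A)^{\gamma+\kappa}\Xb(r)|_{L^4}^2+|(-A)^{\gamma+\kappa}Y^\epsilon(r)|_{L^4}^2$ into the bound, so after taking moments you obtain $C(T)\epsilon$ times quantities controlled by $|(-A)^{\gamma+\kappa}x_0|_{L^4}$, $|(-A)^{\gamma+\kappa}y_0|_{L^4}$ and $M_{\gamma+\kappa,4}(Q^{1/2},T)$ --- not the claimed $C(T)\epsilon(1+|x_0|_{L^2}^2+|y_0|_{L^2}^2)$ with $C(T)$ independent of the data. What you prove is a weaker variant $|\mathcal{I}_2^\epsilon(s,t)|\le C(T,x_0,y_0,Q)\epsilon$, which happens to suffice for Theorem~\ref{th:strong_regular}, but it is not the lemma as stated, and it imposes regularity on $x_0,y_0$ (and on $Q$) that the lemma does not require.

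Your closing justification is also based on a misconception: you assert that without the gain $(-A)^{-\gamma}$ ``the integral would diverge.'' It would not. Since $2t-s-r\in[t-s,2(t-s)]$ on the integration domain, even the ungained bound $\|Ae^{(2t-s-r)A}\|_{\mathcal{L}(L^2,L^2)}\le C(2t-s-r)^{-1}$ integrates to $C\log 2$; there is no singularity inside $(s,t)$. The positive regularity $\gamy>0$ of the fast process is genuinely needed elsewhere (e.g.\ in Lemma~\ref{lem:J2} and in the $\mathcal{I}_{3,2}^\epsilon$ term), but not here. Recognizing that $\mathcal{I}_2^\epsilon$ only requires the elementary Lemma~\ref{lem:Phi_0} is precisely what lets the paper keep the dependence on the initial data down to $L^2$ norms.
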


\begin{lemma}\label{lem:I3}
Let Assumption~\ref{ass:regular} be satisfied, and let $\gamma\in(1-\aly,\gamy)$ and $\kappa\in(0,\gamma)$. There exists $C_{\gamma,\kappa}(T)\in(0,\infty)$ such that, for all  $\epsilon\in(0,1)$,
\[
|\mathcal{I}_{3}^\epsilon(s,t)|\le C_{\gamma,\kappa}(T)(t-s)^{-\gamma-\frac{\kappa}{2}}\epsilon \bigl(1+|(-A)^{\gamma+\kappa}x_0|_{L^8}^3+|(-A)^{\gamma+\kappa}y_0|_{L^8}^3\bigr)\bigl(1+|(-A)^{1-\gamma}x_0|_{L^8}+M_{1-\gamma,8}(Q^\frac12)+{\rm Tr}(Q)\bigr).
\]
\end{lemma}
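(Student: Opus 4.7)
The plan is to treat the three sub-terms $\mathcal{I}_{3,1}^\epsilon$, $\mathcal{I}_{3,2}^\epsilon$, $\mathcal{I}_{3,3}^\epsilon$ separately by applying the Poisson-equation estimates of Section~\ref{sec:Poisson} to $D_x\Phi$ or $D_x^2\Phi$ evaluated at $\theta_{s,t}(r)$, and then summing. Two ingredients pervade the argument: the smoothing property of the semigroup, namely
\[
|(-A)^{\beta}\theta_{s,t}(r)|_{L^p}\le C(2t-s-r)^{-\beta}|\delta F(\Xb(s),Y^\epsilon(s))|_{L^p},
\]
which transfers regularity to $\theta$ at the price of an integrable singularity in $r$; and the uniform moment estimates of Proposition~\ref{propo:well_av} together with Assumption~\ref{ass:Y} and the linear growth of $F$ and $\Fb$. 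Whenever a factor $(t-s)^a$ with $a\ge 0$ appears from integration in $r$, it is absorbed into $(t-s)^{-\gamma-\kappa/2}$ via $(t-s)^{a}\le T^{a+\gamma+\kappa/2}(t-s)^{-\gamma-\kappa/2}$ on $[0,T]$.

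The two easy contributions are $\mathcal{I}_{3,1}^\epsilon$ and $\mathcal{I}_{3,3}^\epsilon$. For the former I apply Lemma~\ref{lem:Phi_1} with $h=\Fb(\Xb(r))$ in the form $|\langle D_x\Phi(x,y,\theta),h\rangle|\le C(1+|y|_{L^4})|\theta|_{L^2}|h|_{L^4}$; the linear growth $|\Fb(\Xb(r))|_{L^4}\le C(1+|\Xb(r)|_{L^4})$ and $|\theta_{s,t}(r)|_{L^2}\le|\delta F(\Xb(s),Y^\epsilon(s))|_{L^2}$, combined with Cauchy--Schwarz and the $L^4$-moment bounds on $\Xb$ and $Y^\epsilon$, yield a bound of order $\epsilon(t-s)$ times the required initial-data factors. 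For $\mathcal{I}_{3,3}^\epsilon$ I expand ${\rm Tr}(QD_x^2\Phi(\cdot,\cdot,\theta_{s,t}(r)))=\sum_n q_n D_x^2\Phi(\cdot,\cdot,\theta_{s,t}(r)).(f_n,f_n)$ and apply Lemma~\ref{lem:Phi_2} in the form $|D_x^2\Phi.(h_1,h_2)|\le C(1+|y|_{L^4})|\theta|_{L^2}|h_1|_{L^8}|h_2|_{L^8}$; the uniform bound $\sup_n|f_n|_{L^\infty}<\infty$ from Assumption~\ref{ass:Q} and the condition ${\rm Tr}(Q)<\infty$ from Assumption~\ref{ass:regular} then produce a contribution of order $\epsilon(t-s){\rm Tr}(Q)$ times moments, again absorbed as explained above.

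The core of the argument, and the main obstacle, is $\mathcal{I}_{3,2}^\epsilon$, which is where the full strength of Assumption~\ref{ass:regular} is used. I apply Lemma~\ref{lem:Phi_1bis} with $h=A\Xb(r)$ in the pairing
\[
|\langle D_x\Phi(x,y,\theta),h\rangle|\le C_{\gamma,\kappa}\bigl(1+|(-A)^{\gamma+\kappa}x|_{L^8}^2+|(-A)^{\gamma+\kappa}y|_{L^8}^2\bigr)|(-A)^{\gamma+\kappa/2}\theta|_{L^4}|(-A)^{-\gamma}h|_{L^2}.
\]
Since $(-A)^{-\gamma}A\Xb(r)=-(-A)^{1-\gamma}\Xb(r)$, the condition $\gamma>1-\aly$ from Assumption~\ref{ass:regular} makes $1-\gamma<\aly$, so that the moment estimate~\eqref{eq:moment_av} (together with the embedding $L^8\hookrightarrow L^2$ on the bounded domain $\mathcal{D}$) controls $\E|(-A)^{1-\gamma}\Xb(r)|_{L^2}^M$ by $C(1+|(-A)^{1-\gamma}x_0|_{L^8}^M+M_{1-\gamma,8}(Q^{1/2},T)^M)$. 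The semigroup estimate above with $\beta=\gamma+\kappa/2$ and $p=4$, followed by $\int_s^t(2t-s-r)^{-\gamma-\kappa/2}dr\le C(t-s)^{1-\gamma-\kappa/2}\le CT(t-s)^{-\gamma-\kappa/2}$, handles the time integration since $\gamma+\kappa/2<1$. The quadratic prefactor in $(-A)^{\gamma+\kappa}\Xb(r)$ and $(-A)^{\gamma+\kappa}Y^\epsilon(r)$, the contribution $(1+|\Xb(s)|_{L^4}+|Y^\epsilon(s)|_{L^4})$ from $|\delta F(\Xb(s),Y^\epsilon(s))|_{L^4}$, and the factor $|(-A)^{1-\gamma}\Xb(r)|_{L^2}$ are combined through H\"older's inequality and the moment estimates; using $\gamma+\kappa<\gamy\le\aly$ (for the $Y^\epsilon$ moments via Assumption~\ref{ass:Y}) and the elementary inequality $\gamma+\kappa<1-\gamma$ (which follows from $2\gamma+\kappa<2\gamy\le 1$, allowing $|(-A)^{\gamma+\kappa}x_0|_{L^8}\le C|(-A)^{1-\gamma}x_0|_{L^8}$), one assembles exactly the cubic dependence on $|(-A)^{\gamma+\kappa}x_0|_{L^8}$ and $|(-A)^{\gamma+\kappa}y_0|_{L^8}$ announced in the statement. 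The hard part is this balancing of the fractional powers of $-A$ between $\theta$ and $h=A\Xb(r)$ in Lemma~\ref{lem:Phi_1bis}, which is the only place where both $\aly+\gamy>1$ and the full quantitative content of Section~\ref{sec:Poisson} are jointly needed.
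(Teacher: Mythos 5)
Your proposal is correct and follows essentially the same route as the paper: the three sub-terms are handled respectively by Lemma~\ref{lem:Phi_1}, Lemma~\ref{lem:Phi_1bis} (with $h=A\Xb(r)$ so that $(-A)^{-\gamma}h=-(-A)^{1-\gamma}\Xb(r)$ is controlled by the moment estimate~\eqref{eq:moment_av} thanks to $1-\gamma<\aly$, while the semigroup smoothing puts $(-A)^{\gamma+\kappa/2}$ on $\theta_{s,t}(r)$ at the cost of the singularity $(t-s)^{-\gamma-\kappa/2}$), and Lemma~\ref{lem:Phi_2} combined with ${\rm Tr}(Q)<\infty$ and $\sup_n|f_n|_{L^\infty}<\infty$. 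The only (immaterial) deviations are which branch of the $\min$ in Lemmas~\ref{lem:Phi_1} and~\ref{lem:Phi_2} you select for $\mathcal{I}_{3,1}^\epsilon$ and $\mathcal{I}_{3,3}^\epsilon$.
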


The proofs of the three auxiliary lemmas above is provided below, then the proof of Theorem~\ref{th:strong_regular} is concluded.

\begin{proof}[Proof of Lemma~\ref{lem:I1}]
For $r\in\left\{s,t\right\}$, note that $\E|\theta_{s,t}(r)|_{L^2}^2\le C(1+|x_0|_{L^2}^2+|y_0|_{L^2}^2)$, since $F$ has at most linear growth, and thanks to moment estimates, in Proposition~\ref{propo:well_av} and in Assumption~\ref{ass:Y}. Thus, thanks to Lemma~\ref{lem:Phi_0}.
\begin{align*}
\E|\Phi(\Xb(r),Y^\epsilon(r),\theta_{s,t}(r))|&\le C(1+(\E|Y^\epsilon(r)|_{L^2}^2)^{\frac12})(\E|\theta_{s,t}(r)|_{L^2}^2)^{\frac12}\\
&\le C(1+|y_0|_{L^2})(1+|x_0|_{L^2}+|y_0|_{L^2}),
\end{align*}
which concludes the proof.
\end{proof}

\begin{proof}[Proof of Lemma~\ref{lem:I2}]
For $r\in(s,t)$, using Lemma~\ref{lem:Phi_0}, and Assumption~\ref{ass:Y},
\begin{align*}
\E|\Phi\bigl(\Xb(r),Y^\epsilon(r),A\theta_{s,t}(r)\bigr)|&\le C(1+|y_0|_{L^2})(\E|A\theta_{s,t}(r)|_{L^2}^2)^{\frac12}\\
&\le C\|Ae^{(2t-s-r)A}\|_{\mathcal{L}(L^2,L^2)}(1+|x_0|_{L^2}^2+|y_0|_{L^2}^2).
\end{align*}
It is straightforward to check that $\int_{s}^{t}\|Ae^{(2t-s-r)A}\|_{\mathcal{L}(H)}dr\le C(T)\|(-A)^{\frac12}e^{(t-s)A}\|_{\mathcal{L}(L^2,L^2)}\le C(T)(t-s)^{-\frac12}$, thus one obtains
\[
(t-s)^{\frac12}|\mathcal{I}_2^\epsilon(s,t)|\le C(T)\epsilon(1+|x_0|_{L^2}^2+|y_0|_{L^2}^2),
\]
which concludes the proof.
\end{proof}

\begin{proof}[Proof of Lemma~\ref{lem:I3}]
First, proceeding as in the proof of Lemma~\ref{lem:I1}, and using the global Lipschitz continuity of $\Fb$, it is straightforward to check that
\[
|\mathcal{I}_{3,1}^\epsilon(s,t)|\le C_T\epsilon(1+|x_0|_{L^2}^3+|y_0|_{L^2}^3).
\]
Second, let $\gamma\in[0,\gamy)$, and $\kappa\in(\gamy-\gamma)$. Thanks to Lemma~\ref{lem:Phi_1bis}, and H\"older inequality, one obtains
\begin{align*}
|\mathcal{I}_{3,2}^\epsilon(s,t)|&\le C\epsilon\int_{s}^{t}(\E|(-A)^{1-\gamma}\Xb(r)|_{L^2}^4)^{\frac14}(\E|(-A)^{\gamma+\frac{\kappa}{2}}\theta_{s,t}(r)|_{L^4}^2)^{\frac12}\\
&\hspace{2cm}\bigl(1+(\E|(-A)^{\gamma+\kappa}\Xb(r)|_{L^8}^8)^{\frac14}+(\E|(-A)^{\gamma+\kappa}Y^\epsilon(r)|_{L^8}^8)^{\frac14}\bigr)dr\\
&\le C\epsilon (t-s)^{-\gamma-\frac{\kappa}{2}}(1+|x_0|_{L^4}+|y_0|_{L^4})\underset{r\in[0,T]}\sup~(\E|(-A)^{1-\gamma}\Xb(r)|_{L^2}^4)^{\frac14}\\
&\hspace{2cm}\bigl(1+|(-A)^{\gamma+\kappa}y_0|_{L^8}^2+\underset{r\in[0,T]}\sup~(\E|(-A)^{\gamma+\kappa}\Xb(r)|_{L^8}^8)^{\frac14}\bigr).
\end{align*}
Using the conditions on $\gamma$ and $\kappa$ above, and the moment estimates, one obtains
\[
|\mathcal{I}_{3,2}^\epsilon(s,t)|\le C_{\gamma,\kappa,T}\epsilon (t-s)^{-\gamma-\frac{\kappa}{2}}(1+|(-A)^{1-\gamma}x_0|_{L^8}^3+|(-A)^{\gamma+\kappa}y_0|_{L^8}^3)\bigl(1+|(-A)^{1-\gamma}x_0|_{L^8}+M_{1-\gamma,8}(Q^\frac12)\bigr).
\]
It remains to deal with the trace term, $\mathcal{I}_{3,3}^\epsilon$. Using Lemma~\ref{lem:Phi_2} and Assumption~\ref{ass:Q},
\begin{align*}
|\mathcal{I}_{3,3}^\epsilon(s,t)|&\le C\epsilon \int_{s}^{t}\sum_{n\in\N}q_n|D_x^2\Phi(\Xb(r),Y^\epsilon(r),\theta_{s,t}(r)).(f_n,f_n)|dr\\
&\le C\epsilon\sum_{n\in\N}q_n|f_n|_{L^4}^2 \int_{s}^{t}(\E|\theta_{s,t}(r)|_{L^4}^2)^{\frac12}(1+(\E|Y^\epsilon(r)|_{L^4}^2)^{\frac12})dr\\
&\le C\epsilon {\rm Tr}(Q)(1+|x_0|_{L^4}^2+|y_0|_{L^4}^2).
\end{align*}
Gathering the estimates on $|\mathcal{I}_{3,1}^\epsilon(s,t)|$, $|\mathcal{I}_{3,2}^\epsilon(s,t)|$ and $|\mathcal{I}_{3,3}^\epsilon(s,t)|$ then concludes the proof of Lemma~\ref{lem:I3}.
\end{proof}

Note that the assumption that ${\rm Tr}(Q)=\sum_{n\in\N}q_n$ is finite may be removed, using further regularity properties of the second order derivative $D_x^2\Phi$. However, this does not seem to improve the result.

\begin{proof}[Proof of Theorem~\ref{th:strong_regular}]
Gathering estimates from Lemmas~\ref{lem:I1}, \ref{lem:I2} and~\ref{lem:I3}, gives
\begin{align*}
\E\big|X^\epsilon(t)-\Xb(t)|_ {L^2}^2&\le CT\int_{0}^{t}\E\big|X^\epsilon(s)-\Xb(s)\big|_{L^2}^2ds+\int_{0}^{t}\bigl(|\mathcal{I}_1^\epsilon(s,t)|+|\mathcal{I}_2^\epsilon(s,t)|+|\mathcal{I}_3^\epsilon(s,t)|\bigr)ds\\
&\le CT\int_{0}^{t}\E\big|X^\epsilon(s)-\Xb(s)\big|_{L^2}^2ds++C(T,x_0,y_0)M_{1-\gamma,8}(Q^\frac12,T)\epsilon.
\end{align*}
It remains to apply Gronwall Lemma to conclude the proof.
\end{proof}

\section{Proof of Theorem~\ref{th:weak_regular}}\label{sec:proof_weak_regular}

The goal of this section is to provide a proof of Theorem~\ref{th:weak_regular}, {\it i.e.} that under Assumption~\ref{ass:regular}, the weak order of convergence in the averaging principle is equal to $1$.

Let $T\in(0,\infty)$. Thanks to Assumption~\ref{ass:regular}, let also $\gamma\in(1-\aly,\gamy)$, $\kappa\in(0,\gamy-\gamma)$, and let the initial conditions $x_0$ and $y_0$ satisfy $|(-A)^{1-\gamma}x_0|_{L^8}+|(-A)^{\gamma+\kappa}y_0|_{L^8}<\infty$.

A key tool in the analysis is the function $\ub$ defined below:
\begin{equation}\label{eq:ub}
\ub(t,x)=\E[\varphi(\Xb^x(t))].
\end{equation}
Note that $\ub$ is the solution of the Kolmogorov equation
\[
\partial_t\ub=\Lb \ub,
\]
with initial condition $\ub(0,\cdot)=\varphi$, where $\Lb$ is the infinitesimal generator associated with the averaged equation~\eqref{eq:av}, see~\eqref{eq:gen_av}.

To deal with this infinite dimensional PDE, usually an auxiliary approximation procedure is employed, see for instance~\cite{BD}, in order to justify the computations. To simplify notation, this is omitted in this manuscript.

The regularity properties stated in Proposition~\ref{propo:ub} play a fundamental role in the analysis of the weak error below.
\begin{propo}[Regularity properties of the derivatives of $\ub$]\label{propo:ub}
Let $\varphi$ be an admissible test function.

For all $\beta<1$, there exists $C_\beta(T)\in(0,\infty)$, such that for all $t\in(0,T]$,
\begin{equation}\label{eq:ub_1}
|D_x\ub(t,x).(-A)^\beta h|\le C_\beta(T)t^{-\beta}|h|_{L^2},
\end{equation}
and for all $\beta_1,\beta_2\in(0,1)$ such that $\beta_1+\beta_2<1$, there exists $C_{\beta_1,\beta_2}(T)\in(0,\infty)$, such that for all $t\in(0,T]$,
\begin{equation}\label{eq:ub_2}
\big|D_x^2\ub(t,x).\bigl((-A)^{\beta_1}h_1,(-A)^{\beta_2}h_2\bigr)\big|\le C_{\beta_1,\beta_2}(T)t^{-\beta_1-\beta_2}|h_1|_{L^{2}}|h_2|_{L^{2}}.
\end{equation}
In addition, for $p_1,p_2,p_3\in[2,\infty)$ such that $1=\frac{1}{p_1}+\frac{1}{p_2}+\frac{1}{p_3}$, there exists $C_{p_1,p_2,p_3}\in(0,\infty)$, such that for all $t\in[0,T]$,
\begin{equation}\label{eq:ub_3}
|D_x^3\ub(t,x).(h_1,h_2,h_3)|\le C_{p_1,p_2,p_3}(T)|h_1|_{L^{p_1}}|h_2|_{L^{p_2}}|h_3|_{L^{p_3}}.
\end{equation}
\end{propo}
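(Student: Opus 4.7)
The natural approach is via variational calculus: differentiating the representation $\ub(t, x) = \E[\varphi(\Xb^x(t))]$ with respect to the initial condition $x$ introduces the first and second variation processes
\begin{align*}
\eta^h(t) &= D_x \Xb^x(t) . h, \\
\zeta^{h_1, h_2}(t) &= D_x^2 \Xb^x(t) . (h_1, h_2),
\end{align*}
which solve linear mild equations with coefficients $D\Fb(\Xb^x(\cdot))$ and source $D^2\Fb(\Xb^x(\cdot)).(\eta^{h_1}, \eta^{h_2})$. Proposition~\ref{propoFb} guarantees well-posedness in the appropriate $L^p$-based spaces.

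For~\eqref{eq:ub_1} I would combine the analytic smoothing $\|(-A)^\beta e^{tA}\|_{\mathcal{L}(L^2)}\le C_\beta t^{-\beta}$ (for $\beta<1$), the $L^2$-boundedness of $D\Fb$, and a Gronwall argument in the mild form of the linearized equation, to derive $|\eta^{(-A)^\beta h}(t)|_{L^2}\le C_\beta(T)\,t^{-\beta}|h|_{L^2}$. Then~\eqref{eq:ub_1} follows from $D_x\ub(t,x).(-A)^\beta h=\E[D\varphi(\Xb^x(t)).\eta^{(-A)^\beta h}(t)]$ and the admissibility bound $|D\varphi(y).k|\le C|k|_{L^2}$.

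The principal obstacle is the second estimate~\eqref{eq:ub_2}. The direct decomposition
\[
D_x^2\ub(t, x).(g_1, g_2) = \E[D^2\varphi(\Xb^x(t)).(\eta^{g_1}(t), \eta^{g_2}(t))] + \E[D\varphi(\Xb^x(t)).\zeta^{g_1, g_2}(t)]
\]
handles the first term directly, but the second term resists a naive $L^2$-estimate: bounding $|D^2\Fb.(\eta^{g_1}, \eta^{g_2})|_{L^2}$ via Proposition~\ref{propoFb}\eqref{eq:propoFb2} with $p_1=p_2=4$ forces the variations to be measured in $L^4$, costing an additional $s^{-d/8}$ via Sobolev embedding and destroying the target rate. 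My plan is to exploit the Markov identity $\ub(t,x)=\E[\ub(t/2,\Xb^x(t/2))]$ and differentiate twice in $x$, so that the $\zeta$-contribution becomes $\E[D_y\ub(t/2,\Xb^x(t/2)).\zeta^{g_1,g_2}(t/2)]$, to which the already-proven estimate~\eqref{eq:ub_1} at time $t/2$ applies: for any $\gamma\in(d/4,1)$,
\[
|D_y\ub(t/2, y).\zeta| \le C_\gamma(t/2)^{-\gamma}|(-A)^{-\gamma}\zeta|_{L^2}.
\]
It then remains to show $\E|(-A)^{-\gamma}\zeta^{g_1,g_2}(t/2)|_{L^2}\le C_T\,t^{1-\beta_1-\beta_2}|h_1|_{L^2}|h_2|_{L^2}$. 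Starting from the mild equation for $\zeta$, I would combine the pointwise Nemytskii bound $|D^2\Fb(x).(h_1,h_2)|_{L^1}\le C|h_1|_{L^2}|h_2|_{L^2}$, the $L^1\to L^2$ boundedness of $(-A)^{-\gamma}$ for $\gamma>d/4$ (dual to the Sobolev embedding $D((-A)^\gamma)\hookrightarrow L^\infty$), and the $L^1$-contractivity of $e^{tA}$. Closing the Gronwall loop arising from the $D\Fb(\Xb(s))\zeta(s)$ term requires a preliminary crude $L^2$-bound of the form $|\zeta(s)|_{L^2}\le C s^{1-d/4-\beta_1-\beta_2}|h_1|_{L^2}|h_2|_{L^2}$ obtained via the smoothing $\|e^{tA}\|_{\mathcal{L}(L^1,L^2)}\le Ct^{-d/4}$, which is finite because $d\le 3$ and $\beta_1+\beta_2<1$. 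Multiplying by $(t/2)^{-\gamma}$ and using $\gamma<1$ delivers the desired rate $C_T\,t^{-\beta_1-\beta_2}$ on bounded intervals.

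Finally, for~\eqref{eq:ub_3} I would iterate the chain rule one more step, decomposing $D_x^3\ub$ into contributions from $D^j\varphi$ ($j=1,2,3$) paired with products of the first, second, and third variations. Each factor is controlled in $L^{p_i}$ using the $L^p$-version of the variation estimates (from the mild equations with $L^p$-valued stochastic integration and the $L^p$-Lipschitz continuity of $\Fb$), the admissibility bound on $D^3\varphi$, the H\"older condition $1=\sum 1/p_i$, and Proposition~\ref{propoFb}\eqref{eq:propoFb2} for the $D^2\Fb$ contributions.
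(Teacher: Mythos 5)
Your treatment of \eqref{eq:ub_1} and \eqref{eq:ub_3} matches the paper's (first and third variation processes in mild form, semigroup smoothing, Gronwall, admissibility of $\varphi$). For \eqref{eq:ub_2} you and the paper rely on the same two key facts --- the pointwise Nemytskii bound $|D^2\Fb(x).(k_1,k_2)|_{L^1}\le C|k_1|_{L^2}|k_2|_{L^2}$ and the smoothing $\|e^{tA}\|_{\mathcal{L}(L^1,L^2)}\le C t^{-d/4}$ with $\frac{d}{4}<1$ for $d\le 3$ --- but you wrap them in an extra layer (the Markov identity at time $t/2$ together with $\|(-A)^{-\gamma}\|_{\mathcal{L}(L^1,L^2)}<\infty$ for $\gamma>\frac{d}{4}$) that the paper does not need. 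Indeed, the ``preliminary crude bound'' you invoke to close your Gronwall loop,
\[
|\zeta^{g_1,g_2}(s)|_{L^2}\le C\,s^{1-\frac{d}{4}-\beta_1-\beta_2}\,|h_1|_{L^2}|h_2|_{L^2},
\]
is exactly the paper's estimate, and since $1-\frac{d}{4}>0$ it already gives $|\zeta^{g_1,g_2}(t)|_{L^2}\le C(T)\,t^{-\beta_1-\beta_2}|h_1|_{L^2}|h_2|_{L^2}$ on $(0,T]$; pairing this with $D\varphi$ via admissibility finishes \eqref{eq:ub_2} directly, so the time-splitting and the negative-order norm of $\zeta$ buy you nothing here (they would only become relevant for $d\ge 4$ or to relax $\beta_1+\beta_2<1$). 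There is also a small logical wrinkle in your variant: after splitting at $t/2$, the first term of the second derivative is $\E\bigl[D_y^2\ub(t/2,\cdot).(\eta^{g_1},\eta^{g_2})\bigr]$ rather than $\E\bigl[D^2\varphi(\cdot).(\eta^{g_1},\eta^{g_2})\bigr]$, so you need the case $\beta_1=\beta_2=0$ of \eqref{eq:ub_2} as a prior step --- which sends you back to the direct argument anyway. Your diagnosis that the naive $L^2$-estimate of the source term with $p_1=p_2=4$ destroys the rate is correct; the cure, in both your proof and the paper's, is to measure the source in $L^1$.
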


Regularity properties for infinite dimensional Kolmogorov equations, as stated in Proposition~\ref{propo:ub}, are now a classical tool in the analysis of parabolic SPDEs. We refer to~\cite{BD} for a recent overview of this topic and for further results. A sketch of proof is provided at the end of this section.

For the analysis of the averaging error, in the weak sense, the fundamental object is the auxiliary function $v$ defined by
\begin{equation}\label{eq:v}
v(t,x,y)=\Phi\bigl(x,y,D_x\ub(t,x)\bigr),
\end{equation}
where the first order derivative $D_x\ub(t,x)$ is interpreted as an element of $L^2$.

By construction, $v(t,x,\cdot)$ is the solution of the Poisson equation~\eqref{eq:Poisson} with $\theta=D_x\ub(t,x)$, {\it i.e.} one has the fundamental identity
\begin{equation}\label{eq:v_identity}
-\Ly v(t,x,y)=\langle F(x,y)-\Fb(x),D_x\ub(t,x)\rangle.
\end{equation}

For all $y\in L^2$, denote by $\mathcal{L}_y$ is the infinitesimal generator given by
\[
\mathcal{L}_y\varphi(x)=\langle D_x\varphi(x),Ax+F(x,y)\rangle+\frac12 \sum_{n\in\N}q_nD_x^2\varphi(x,y).\bigl(f_n,f_n\bigr),
\]
for functions $\varphi:x\in L^2\mapsto \varphi(x)\in\R$, depending only on the slow variable $x$.

Applying It\^o formula, the weak error is written as
\begin{align*}
\E[\varphi(X^\epsilon(T))]-\E[\varphi(\Xb(T))]&=\E[\ub(0,X^\epsilon(T))]-\E[\ub(T,X^\epsilon(0))]\\
&=\int_{0}^{T}\E\bigl[-\partial_t\ub(T-t,X^\epsilon(t))+\mathcal{L}_{Y^\epsilon(t)}\ub(T-t,X^\epsilon(t))\bigr]dt\\
&=\int_{0}^{T}\E\bigl[(\mathcal{L}_{Y^\epsilon(t)}-\Lb)\ub(T-t,X^\epsilon(t))\bigr]dt\\
&=\int_{0}^{T}\E\bigl[\langle F(X^\epsilon(t),Y^\epsilon(t))-\overline{F}(X^\epsilon(t)),D_x\overline{u}(T-t,X^\epsilon(t))\rangle\bigr]dt\\
&=\int_{0}^{T}\E\bigl[-\Ly v(T-t,X^\epsilon(t),Y^\epsilon(t))\bigr]dt,
\end{align*}
thanks to the identity~\eqref{eq:v_identity}. To exploit this formula for the weak error, note that the It\^o formula applied with the function $v$ yields the identity
\begin{align*}
\E[v(0,X^\epsilon(T),Y^\epsilon(T))]&=\E[v(T,X^\epsilon(0),Y^\epsilon(0)]\\
&+\int_{0}^{T}\E\bigl[\bigl(\mathcal{L}_{Y^\epsilon(t)}+\frac{1}{\epsilon}\Ly-\partial_t\bigr)v(T-t,X^\epsilon(t),Y^\epsilon(t))\bigr]dt.
\end{align*}
As a consequence, the weak error has the following decomposition
\begin{equation}\label{eq:decomp_weak}
\E[\varphi(X^\epsilon(T))]-\E[\varphi(\Xb(T))]=\mathcal{J}_1^\epsilon+\mathcal{J}_2^\epsilon+\mathcal{J}_3^\epsilon,
\end{equation}
where
\begin{align*}
\mathcal{J}_1^\epsilon&=\epsilon\bigl(\E[v(T,X^\epsilon(0),Y^\epsilon(0)]-\E[v(0,X^\epsilon(T),Y^\epsilon(T))]\bigr)\\
\mathcal{J}_2^\epsilon&=-\epsilon\int_{0}^{T}\E\bigl[\partial_t v(T-t,X^\epsilon(t),Y^\epsilon(t))\bigr]dt\\
\mathcal{J}_3^\epsilon&=\epsilon\int_{0}^{T}\E\bigl[\mathcal{L}_{Y^\epsilon(t)}v(T-t,X^\epsilon(t),Y^\epsilon(t))\bigr]dt,
\end{align*}
and the third expresion is decomposed as $\mathcal{J}_3^\epsilon=\mathcal{J}_{3,1}^\epsilon+\mathcal{J}_{3,2}^\epsilon+\mathcal{J}_{3,3}^\epsilon$, where
\begin{align*}
\mathcal{J}_{3,1}^\epsilon&=\epsilon\int_{0}^{T}\E\bigl[\langle F(X^\epsilon(t),Y^\epsilon(t)),D_xv(T-t,X^\epsilon(t),Y^\epsilon(t))\rangle\bigr]dt\\
\mathcal{J}_{3,2}^\epsilon&=\epsilon\int_{0}^{T}\E\bigl[\langle AX^\epsilon(t),D_xv(T-t,X^\epsilon(t),Y^\epsilon(t))\rangle\bigr]dt\\
\mathcal{J}_{3,3}^\epsilon&=\frac{\epsilon}{2}\int_{0}^{T}\E\bigl[\sum_{n\in\N}q_nD_x^2 v(T-t,X^\epsilon(t),Y^\epsilon(t)).\bigl(f_n,f_n\bigr)\bigr]dt.
\end{align*}

Theorem~\ref{th:weak_regular} is then a straightforward consequence of the three auxiliary results stated below.

\begin{lemma}\label{lem:J1}
There exists $C(T)\in(0,\infty)$, such that, for all $\epsilon\in(0,1)$, and all $x_0,y_0\in H$,
\[
|\mathcal{J}_1^\epsilon|\le C(T)\epsilon(1+|y_0|_{L^2}).
\]
\end{lemma}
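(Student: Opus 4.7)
The plan is to observe that $\mathcal{J}_1^\epsilon$ already carries an explicit factor of $\epsilon$ in front, so all that is required is a bound on $|v(t,x,y)|$ that is uniform in $t\in[0,T]$ and in $x$, and of at most linear growth in $|y|_{L^2}$, after which taking expectations and using a moment estimate for $Y^\epsilon$ finishes the job.

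More precisely, recall $v(t,x,y)=\Phi(x,y,D_x\ub(t,x))$ with $D_x\ub(t,x)$ identified with an element of $L^2$. Applying Lemma~\ref{lem:Phi_0} with $p=p'=2$ gives
\[
|v(t,x,y)|\le C(1+|y|_{L^2})|D_x\ub(t,x)|_{L^2}.
\]
Since $\varphi$ is an admissible test function, the bound~\eqref{eq:ub_1} of Proposition~\ref{propo:ub} with $\beta=0$ yields $|D_x\ub(t,x).h|\le C(T)|h|_{L^2}$ uniformly in $t\in[0,T]$ and $x\in L^2$, hence by Riesz identification $|D_x\ub(t,x)|_{L^2}\le C(T)$. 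Combining, we obtain the key pointwise bound
\[
\underset{t\in[0,T],\,x\in L^2}\sup~|v(t,x,y)|\le C(T)(1+|y|_{L^2}).
\]

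Now, evaluating at $(X^\epsilon(0),Y^\epsilon(0))=(x_0,y_0)$ and at $(X^\epsilon(T),Y^\epsilon(T))$ and taking expectations,
\[
|\mathcal{J}_1^\epsilon|\le \epsilon\,C(T)\bigl(1+|y_0|_{L^2}+\E|Y^\epsilon(T)|_{L^2}\bigr).
\]
By Assumption~\ref{ass:Y}, applied with $\gamma=0$, $p=2$, $M=1$, the moment estimate~\eqref{eq:assY_1} gives
\[
\underset{\epsilon\in(0,1)}\sup~\underset{t\ge 0}\sup~\E|Y^\epsilon(t)|_{L^2}=\underset{t\ge 0}\sup~\E|Y(t)|_{L^2}\le C(1+|y_0|_{L^2}),
\]
and plugging this in concludes the proof.

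There is no genuine obstacle here: the only point worth noting is that the initial datum $x_0$ does not appear on the right-hand side, which is a reflection of the fact that $v$ depends on $x$ only through $D_x\ub(t,x)$, whose $L^2$-norm is uniformly bounded in $x$ by the Lipschitz regularity of $\ub(t,\cdot)$ coming from the admissibility of $\varphi$.
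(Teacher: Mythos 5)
Your proof is correct and follows essentially the same route as the paper: bound $|v(t,x,y)|$ via Lemma~\ref{lem:Phi_0} and the uniform $L^2$-bound on $D_x\ub(t,x)$ from Proposition~\ref{propo:ub}, then conclude with the moment estimate of Assumption~\ref{ass:Y}. You have simply written out the details that the paper leaves implicit.
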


\begin{lemma}\label{lem:J2}
Let $\kappa\in(0,\gamy)$. There exists $C_\kappa(T)\in(0,\infty)$, such that, for all $\epsilon\in(0,1)$, and all $x_0,y_0\in H$,
\[
|\mathcal{J}_2^\epsilon|\le C_\kappa(T)\epsilon\bigl(1+|(-A)^{2\kappa}x_0|_{L^4}^2+|(-A)^{2\kappa}y_0|_{L^4}^2\bigr).
\]
\end{lemma}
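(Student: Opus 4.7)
The plan is to exploit the linearity of $\theta\mapsto\Phi(x,y,\theta)$, which yields
\[
\partial_t v(t,x,y)=\Phi\bigl(x,y,\partial_t D_x\ub(t,x)\bigr),
\]
and then to apply Lemma~\ref{lem:Phi_0ter}, choosing its two internal parameters both equal to $\kappa$ (admissible provided $2\kappa<\gamy$; otherwise $\kappa$ is shrunk appropriately and absorbed in $C_\kappa(T)$), to obtain
\[
|\partial_t v(t,x,y)|\le C\bigl(1+|(-A)^{2\kappa}x|_{L^4}^2+|(-A)^{2\kappa}y|_{L^4}^2\bigr)\bigl|(-A)^{-\kappa}\partial_t D_x\ub(t,x)\bigr|_{L^2}.
\]
The problem is thus reduced to bounding $|(-A)^{-\kappa}\partial_t D_x\ub(t,x)|_{L^2}$ with a $t$-singularity that is integrable at $0$.

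For this, after the standard Galerkin approximation on $\varphi$ (omitted as in the paper), I would differentiate the Kolmogorov equation $\partial_t\ub=\Lb\ub$ in $x$, producing
\begin{align*}
\partial_t D_x\ub(t,x).h&=D_x^2\ub(t,x).(h,Ax+\Fb(x))+D_x\ub(t,x).(Ah+D\Fb(x).h)\\
&\quad+\tfrac12\sum_{n\in\N}q_n D_x^3\ub(t,x).(h,f_n,f_n).
\end{align*}
Setting $h=(-A)^{-\kappa}g$ with $|g|_{L^2}\le 1$ and taking the supremum, each of the five contributions is controlled by Proposition~\ref{propo:ub}. The $Ax$ term, rewritten using $(-A)^{-\kappa}g=(-A)^{\kappa}(-A)^{-2\kappa}g$ and $Ax=-(-A)^{1-2\kappa}(-A)^{2\kappa}x$, is bounded by~\eqref{eq:ub_2} with $\beta_1=\kappa$, $\beta_2=1-2\kappa$ (so that $\beta_1+\beta_2=1-\kappa<1$), yielding the factor $t^{\kappa-1}|g|_{L^2}|(-A)^{2\kappa}x|_{L^2}$; the $Ah$ term equals $-D_x\ub(t,x).(-A)^{1-\kappa}g$ and is bounded via~\eqref{eq:ub_1} with $\beta=1-\kappa$, again giving $t^{\kappa-1}|g|_{L^2}$; the $\Fb(x)$ and $D\Fb(x).h$ contributions are controlled by the nonsingular parts of~\eqref{eq:ub_1}--\eqref{eq:ub_2} combined with Proposition~\ref{propoFb}; and the trace term is handled by~\eqref{eq:ub_3} with $(p_1,p_2,p_3)=(2,4,4)$, together with $\sup_n|f_n|_{L^\infty}<\infty$ (Assumption~\ref{ass:Q}) and the finiteness of $\mathrm{Tr}(Q)$ from Assumption~\ref{ass:regular}. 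Collecting these bounds, and using $|x|_{L^2}\le C|(-A)^{2\kappa}x|_{L^4}$ on the bounded domain $\D$, I obtain
\[
\bigl|(-A)^{-\kappa}\partial_t D_x\ub(t,x)\bigr|_{L^2}\le C_\kappa(T)\bigl(1+t^{\kappa-1}\bigr)\bigl(1+|(-A)^{2\kappa}x|_{L^4}\bigr).
\]

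To conclude, I substitute $(x,y)=(X^\epsilon(t),Y^\epsilon(t))$ into the display for $|\partial_t v|$, take expectations and apply H\"older's inequality, using the moment estimates of Proposition~\ref{propo:well} for $X^\epsilon$ (with $\alpha=2\kappa<\aly$) and of Assumption~\ref{ass:Y} for $Y^\epsilon$ (with $\gamma=2\kappa<\gamy$) to bound $\E|(-A)^{2\kappa}X^\epsilon(t)|_{L^4}^{k}$ and $\E|(-A)^{2\kappa}Y^\epsilon(t)|_{L^4}^{k}$ by constants depending on $|(-A)^{2\kappa}x_0|_{L^4}$ and $|(-A)^{2\kappa}y_0|_{L^4}$; integrating the factor $(T-t)^{\kappa-1}$ over $[0,T]$, which is finite because $\kappa>0$, and multiplying by the prefactor $\epsilon$ from the definition of $\mathcal J_2^\epsilon$ yields the claim (any higher polynomial degrees in the initial-data norms can be absorbed into $C_\kappa(T)$). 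The principal obstacle is the two occurrences of the unbounded operator $A$ created by differentiating $\Lb\ub$ in $x$: these can only be absorbed through the semigroup-smoothing estimates of Proposition~\ref{propo:ub}, and this is precisely what produces both the integrable singular factor $t^{\kappa-1}$ and the shift of a small amount of spatial regularity onto the initial conditions via the $(-A)^{2\kappa}$ norm.
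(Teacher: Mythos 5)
Your argument is essentially the paper's: both pass through $\partial_t v=\Phi\bigl(x,y,D_x(\Lb\ub)\bigr)$, split $D_x(\Lb\ub)$ into the same three contributions, absorb the two unbounded occurrences of $A$ via the smoothing estimates \eqref{eq:ub_1}--\eqref{eq:ub_2} at the cost of the integrable factor $(T-t)^{-1+\kappa}$, and invoke Lemma~\ref{lem:Phi_0ter} under the restriction $2\kappa<\gamy$, the trade of a negative power $(-A)^{-\kappa}$ on $\theta$ against positive powers on $x,y$ being the key point in both versions. The one discrepancy is that by feeding the whole of $\partial_tD_x\ub$ into Lemma~\ref{lem:Phi_0ter}, the $Ax$-contribution, which carries the factor $|(-A)^{2\kappa}x|_{L^4}$, gets multiplied by the prefactor $1+|(-A)^{2\kappa}x|_{L^4}^2+|(-A)^{2\kappa}y|_{L^4}^2$, so you obtain a cubic rather than quadratic power of the initial-data norms; this cannot simply be absorbed into $C_\kappa(T)$, which must be uniform in $x_0,y_0$, but it is repaired exactly as in the paper by reserving Lemma~\ref{lem:Phi_0ter} for the term $h\mapsto\langle Ah+D\Fb(x).h,D_x\ub\rangle$ alone (whose $(-A)^{-\kappa}$-weighted norm is bounded by $C_\kappa(T)t^{-1+\kappa}$ with no $x$-dependence) and applying the cruder Lemma~\ref{lem:Phi_0}, which needs no negative power of $-A$ on $\theta$, to the remaining two pieces.
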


\begin{lemma}\label{lem:J3}
Let $\gamma\in(1-\aly,\gamy)$ and $\kappa\in(0,\gamy-\gamma)$. There exists $C_{\gamma,\kappa}(T)\in(0,\infty)$ such that, for all $\epsilon\in(0,1)$, and all $x_0,y_0\in L^8$,
\[
|\mathcal{J}_{3}^\epsilon|\le C_{\gamma,\kappa}(T)\epsilon\bigl(1+|(-A)^{\gamma+\kappa}x_0|_{L^8}^2+|(-A)^{\gamma+\kappa}y_0|_{L^8}^2\bigr)\bigl(1+|(-A)^{1-\gamma}x_0|_{L^4}+{\rm Tr}(Q)+M_{\alpha,4}(Q^\frac12,T)\bigr).
\]
\end{lemma}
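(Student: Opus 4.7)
The plan is to mirror the strong-error argument of Lemma~\ref{lem:I3}: the three pieces $\mathcal{J}_{3,1}^\epsilon$, $\mathcal{J}_{3,2}^\epsilon$, $\mathcal{J}_{3,3}^\epsilon$ are estimated separately, using the regularity estimates on $\Phi$ from Section~\ref{sec:Poisson}, the regularity of $\ub$ supplied by Proposition~\ref{propo:ub}, and the uniform-in-$\epsilon$ moment bounds of Proposition~\ref{propo:well}. The main novelty with respect to Lemma~\ref{lem:I3} is that the third argument of $\Phi$ is now $D_x\ub(T-t,X^\epsilon)$. Since $\Phi(x,y,\theta)$ is linear in $\theta$, the chain rule gives
\begin{align*}
D_xv(t,x,y).h &= D_x\Phi(x,y,D_x\ub(t,x)).h+\Phi(x,y,D_x^2\ub(t,x).h),\\
D_x^2v(t,x,y).(h_1,h_2) &= D_x^2\Phi(x,y,D_x\ub).(h_1,h_2)+D_x\Phi(x,y,D_x^2\ub.h_2).h_1\\
&\quad +D_x\Phi(x,y,D_x^2\ub.h_1).h_2+\Phi(x,y,D_x^3\ub(t,x).(h_1,h_2)).
\end{align*}
These two identities are inserted into the definitions of $\mathcal{J}_{3,1}^\epsilon$, $\mathcal{J}_{3,2}^\epsilon$, $\mathcal{J}_{3,3}^\epsilon$.

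For $\mathcal{J}_{3,1}^\epsilon$, $F(X^\epsilon,Y^\epsilon)$ has linear growth in $L^4$, $|D_x\ub(T-t,X^\epsilon)|_{L^2}\le C$ by Proposition~\ref{propo:ub} at $\beta=0$, and $|D_x^2\ub(T-t,X^\epsilon).F(X^\epsilon,Y^\epsilon)|_{L^2}\le C|F(X^\epsilon,Y^\epsilon)|_{L^2}$ by Proposition~\ref{propo:ub} at $\beta_1=\beta_2=0$. Combined with Lemma~\ref{lem:Phi_1} in its second form (bounding $|\langle D_x\Phi(x,y,\theta),h\rangle|$ by $|\theta|_{L^2}|h|_{L^4}$) and Lemma~\ref{lem:Phi_0} with $p=p'=2$, together with $L^4$-moments of $X^\epsilon,Y^\epsilon$, this gives $|\mathcal{J}_{3,1}^\epsilon|\le C(T)\epsilon$ with the required constant.

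The crucial estimate concerns $\mathcal{J}_{3,2}^\epsilon$. The chain-rule splits $D_xv.(AX^\epsilon)$ into a $D_x\Phi$-part and a $\Phi$-part. For the first, Lemma~\ref{lem:Phi_1bis} is applied in its \emph{second} alternative, trading $|(-A)^{-\gamma}h|$ against an $L^4$-norm and $|(-A)^{\gamma+\kappa/2}\theta|$ against an $L^2$-norm, which yields
\begin{equation*}
\bigl|D_x\Phi(X^\epsilon,Y^\epsilon,D_x\ub(T-t,X^\epsilon)).(AX^\epsilon)\bigr|\le C_{\gamma,\kappa}\bigl(1+|(-A)^{\gamma+\kappa}X^\epsilon|_{L^8}^2+|(-A)^{\gamma+\kappa}Y^\epsilon|_{L^8}^2\bigr)|(-A)^{\gamma+\kappa/2}D_x\ub(T-t,X^\epsilon)|_{L^2}|(-A)^{1-\gamma}X^\epsilon|_{L^4}.
\end{equation*}
The factor $|(-A)^{\gamma+\kappa/2}D_x\ub(T-t,X^\epsilon)|_{L^2}$ is bounded by $C(T)(T-t)^{-\gamma-\kappa/2}$ via Proposition~\ref{propo:ub}, since $\gamma+\kappa/2<\gamy\le \tfrac12$. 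For the $\Phi$-part, Proposition~\ref{propo:ub} (with $\beta_1=0$, $\beta_2=1-\gamma$) provides $|D_x^2\ub(T-t,X^\epsilon).(AX^\epsilon)|_{L^2}\le C(T)(T-t)^{-(1-\gamma)}|(-A)^\gamma X^\epsilon|_{L^2}$, which is then plugged into Lemma~\ref{lem:Phi_0} with $p=p'=2$. Both time-singularities $(T-t)^{-\gamma-\kappa/2}$ and $(T-t)^{-(1-\gamma)}$ are integrable on $[0,T]$ because $\gamma\in(1-\aly,\gamy)\subset(0,1)$. Taking expectations with H\"older and inserting the moment bounds of Proposition~\ref{propo:well} (which contribute $|(-A)^{\gamma+\kappa}x_0|_{L^8}^2+|(-A)^{\gamma+\kappa}y_0|_{L^8}^2$ from the prefactor and $|(-A)^{1-\gamma}x_0|_{L^4}+M_{1-\gamma,4}(Q^{1/2},T)$ from the $(-A)^{1-\gamma}X^\epsilon$ factor) produces the announced bound.

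For the trace term $\mathcal{J}_{3,3}^\epsilon$, the four-term expansion of $D_x^2v.(f_n,f_n)$ is controlled summand by summand: the $D_x^2\Phi$ piece by Lemma~\ref{lem:Phi_2} together with $|D_x\ub|_{L^2}\le C$; the two mixed $D_x\Phi$ pieces by Lemma~\ref{lem:Phi_1} and $|D_x^2\ub(T-t,\cdot).f_n|_{L^2}\le C|f_n|_{L^2}$; the $\Phi$-piece involving $D_x^3\ub$ by Lemma~\ref{lem:Phi_0} together with the trilinear bound of Proposition~\ref{propo:ub} specialised to $(p_1,p_2,p_3)=(4,4,2)$, which gives $|D_x^3\ub(T-t,\cdot).(f_n,f_n)|_{L^2}\le C|f_n|_{L^4}^2$. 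Since $\sup_n|f_n|_{L^\infty}<\infty$ by Assumption~\ref{ass:Q}, every summand is bounded by $Cq_n$ times moments of $X^\epsilon,Y^\epsilon$ in $L^4$, and the $n$-summation produces a global ${\rm Tr}(Q)$-factor. The main obstacle is $\mathcal{J}_{3,2}^\epsilon$: the irregularity of $AX^\epsilon$ must be absorbed by $(-A)^{-\gamma}$, and the key structural observation is that Proposition~\ref{propo:ub} delivers an $L^2$-estimate (not an $L^4$-estimate) for $(-A)^{\gamma+\kappa/2}D_x\ub$, which matches precisely the \emph{second} alternative in Lemma~\ref{lem:Phi_1bis}; this pairing is what ultimately realises the weak order $1$ under Assumption~\ref{ass:regular} alone.
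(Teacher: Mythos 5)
Your proposal is correct and follows essentially the same route as the paper: the same decomposition $\mathcal{J}_3^\epsilon=\mathcal{J}_{3,1}^\epsilon+\mathcal{J}_{3,2}^\epsilon+\mathcal{J}_{3,3}^\epsilon$, the same chain-rule identities for $D_xv$ and $D_x^2v$, and the same pairing of the second alternative in Lemma~\ref{lem:Phi_1bis} with the estimate~\eqref{eq:ub_1} to absorb $AX^\epsilon$ in $\mathcal{J}_{3,2}^\epsilon$, with Lemmas~\ref{lem:Phi_0},~\ref{lem:Phi_1},~\ref{lem:Phi_2} and Proposition~\ref{propo:ub} handling the remaining pieces exactly as in the text. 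The only (harmless) deviation is your choice $(\beta_1,\beta_2)=(0,1-\gamma)$ instead of the paper's $(0,1-\kappa)$ when bounding $\Phi\bigl(x,y,D_x^2\ub(t,x).(AX^\epsilon,\cdot)\bigr)$; both give an integrable time singularity and finite moments.
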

Note that Assumption~\ref{ass:regular} is only required in Lemma~\ref{lem:J3}.

\begin{proof}[Proof of Lemma~\ref{lem:J1}]
Thanks to Lemma~\ref{lem:Phi_0} and Proposition~\ref{propo:ub}, for all $t\in[0,T]$, $x,y\in L^2$,
\[
|v(t,x,y)|=|\Phi(x,y,D_x\ub(t,x)|\le C(1+|y|)|D_x\ub(t,x)|_{L^2}\le C(T,\varphi)(1+|y|_{L^2}).
\]
Combined with Assumption~\ref{ass:Y}, this estimate concludes the proof of Lemma~\ref{lem:J1}.
\end{proof}

\begin{proof}[Proof of Lemma~\ref{lem:J2}]
Since the mapping $\theta\in H\mapsto \Phi(x,y,\theta)$ is a continuous linear mapping (thanks to Lemma~\ref{lem:Phi_0}), one has the following expression,
\begin{align*}
\partial_tv(t,x,y)&=\Phi\bigl(x,y,\partial_tD_x\overline{u}(t,x)\bigr)\\
&=\Phi\bigl(x,y,D_x\partial_t\overline{u}(t,x)\bigr)\\
&=\Phi\bigl(x,y,D_x\bigl(\Lb\overline{u}(t,x)\bigr)\bigr)\\
&=\Phi\bigl(x,y,\Theta_1(t,x)\bigr)+\Phi\bigl(x,y,\Theta_2(t,x)\bigr)+\Phi\bigl(x,y,\Theta_3(t,x)\bigr),
\end{align*}
where
\begin{align}
\langle \Theta_1(t,x),h\rangle&=\langle Ah+D\Fb(x).h,D_x\ub(t,x)\rangle,\\
\langle \Theta_2(t,x),h\rangle&=D_x^2\ub(t,x).(h,Ax+\Fb(x)),\\
\langle \Theta_3(t,x),h\rangle&=\frac12\sum_{n\in\N}q_nD_x^3\ub(t,x).(f_n,f_n,h).
\end{align}

Let $\kappa\in(0,1)$. Thanks to~\eqref{eq:ub_1}, one has
\[
|\langle \Theta_1(t,x),h\rangle|\le C_\kappa(T)t^{-1+\kappa}|(-A)^{\kappa}h|_{L^2},
\]
which implies $|(-A)^{-\kappa}\Theta_1(t,x)|_{L^2}\le C_\kappa(T)t^{-1+\kappa}$. Thus, thanks to Lemma~\ref{lem:Phi_0ter},
\[
\big|\Phi\bigl(x,y,\Theta_1(t,x)\bigr)\big|\le C_\kappa(T)t^{-1+\kappa}\bigl(1+|(-A)^{2\kappa}x|_{L^4}^2+|(-A)^{2\kappa}y|_{L^4}^2\bigr).
\]
Thanks to~\eqref{eq:ub_2}, one has
\[
|\Theta_2(t,x)|_{L^{2}}=\underset{h\in L^2,|h|_{L^2}\le 1}\sup~|\langle \Theta_2(t,x),h\rangle|\le C_{\kappa}(T)t^{-1+\kappa}(1+|(-A)^{\kappa}x|_{L^2}).
\]
Thus, thanks to Lemma~\ref{lem:Phi_0},
\[
\big|\Phi\bigl(x,y,\Theta_2(t,x)\bigr)|\le C_\kappa(T)t^{-1+\kappa}(1+|(-A)^\kappa x|_{L^2}^2+|y|_{L^2}^2).
\]
Finally, thanks to~\eqref{eq:ub_3} and Assumption~\ref{ass:Q}, one has, for all $h\in L^4$,
\[
|\langle\Theta_3(t,x),h\rangle|\le C(T)\sum_{n\in\N}q_n|f_n|_{L^8}^2|h|_{L^4}\le C(T)|h|_{L^4},
\]
{\it i.e.} $|\Theta_3(t,x)|_{L^{\frac43}}=\underset{h\in L^4,|h|_{L^4}\le 1}\sup~|\langle\Theta_3(t,x),h\rangle|\le C(T)$. Thus Lemma~\ref{lem:Phi_0} yields
\[
\big|\Phi\bigl(x,y,\Theta_3(t,x)\bigr)|\le C(T)(1+|y|_{L^4}).
\]
Gathering the above estimates then yields, if $2\kappa<\gamy$,
\begin{align*}
|\mathcal{J}_2^\epsilon|&\le C_{\kappa}(T)\epsilon\int_{0}^{T}(1+t^{-1+\kappa})\E\bigl(1+|(-A)^{2\kappa}X^{\epsilon}(t)|_{L^4}^2+|(-A)^{2\kappa}Y^\epsilon(t)|_{L^4}^2\bigr)dt\\
&\le C_\kappa(T)T^\kappa \bigl(1+|(-A)^{2\kappa}x_0|_{L^4}^2+|(-A)^{2\kappa}y_0|_{L^4}^2\bigr).
\end{align*}
This concludes the proof of Lemma~\ref{lem:J2}.
\end{proof}

\begin{proof}[Proof of Lemma~\ref{lem:J3}]

Note that the first-order derivative of $v$ with respect to $x$ satisfies the following identity:
\[
\langle D_xv(t,x,y),h\rangle=\Phi\bigl(x,y,D_x^2\ub(t,x).(h,\cdot)\bigr)+\langle D_x\Phi\bigl(x,y,D_x\ub(t,x)\bigr),h\rangle.
\]
Observe that $|D_x^2\ub(t,x).(h,\cdot)|_{L^{2}}\le C|h|_{L^2}$, thanks to~\eqref{eq:ub_2}, with $\beta_1=\beta_2=0$. Then, thanks to Lemma~\ref{lem:Phi_0} and Lemma~\ref{lem:Phi_1}, one  obtains
\begin{align*}
|\langle D_xv(t,x,y),h\rangle|&\le C(1+|y|_{L^2})|D_x^2\ub(t,x).(h,\cdot)|_{L^{2}}+C(1+|y|_{L^4})|D_x\ub(t,x)|_{L^2}|h|_{L^4}\\
&\le C(1+|y|_{L^4})|h|_{L^4}.
\end{align*}

Since $F$ has at most linear growth, using moment estimates then yields
\[
\E|\mathcal{J}_{3,1}^\epsilon|\le C(T)\epsilon (1+|x_0|_{L^4}^2+|y_0|_{L^4}^2). 
\]

To treat the second term, $\mathcal{J}_{3,2}^\epsilon$, observe that $|D_x^2\ub(t,x).(h,\cdot)|_{L^{2}}\le C_{\kappa}t^{-1+\kappa}|(-A)^{-1+\kappa}h|_{L^2}$, for all $\kappa\in(0,1]$, thanks to~\eqref{eq:ub_2}. In addition, $|(-A)^{1-\kappa}D_x\ub(t,x)|_{L^2}\le C_\kappa t^{-1+\kappa}$, thanks to~\eqref{eq:ub_1}. Then, thanks to Lemma~\ref{lem:Phi_0} and Lemma~\ref{lem:Phi_1bis},
\begin{align*}
|\langle D_xv(t,x,y),h\rangle|&\le C_\kappa(1+|y|_{L^2})t^{-1+\kappa}|(-A)^{-1+\kappa}h|_{L^2}\\
&+C_{\gamma,\kappa}\bigl(1+|(-A)^{\gamma+\kappa}x|_{L^8}^2+|(-A)^{\gamma+\kappa}y|_{L^8}^2\bigr)t^{-\gamma-\frac{\kappa}{2}}|(-A)^{-\gamma}h|_{L^4},
\end{align*}
where $\gamma<\gamy$ and $\kappa\in(0,\gamy-\gamma)$.

As a consequence
\begin{align*}
|\mathcal{J}_{3,2}^\epsilon|&\le C_\kappa\epsilon\int_{0}^{T}\bigl(1+\E|Y^\epsilon(t)|_{L^2}^2\bigr)^{\frac12}\bigl(\E|(-A)^{\kappa}X^\epsilon(t)|_{L^2}^2\bigr)^{\frac12}(T-t)^{-1+\kappa}dt\\
&+C_{\gamma,\kappa}\epsilon\int_{0}^{T}\bigl(1+\E|(-A)^{\gamma+\kappa}X^\epsilon(t)|_{L^8}^4+|(-A)^{\gamma+\kappa}Y^\epsilon(t)|_{L^8}^4\bigr)^{\frac12}\bigl(\E|(-A)^{1-\gamma}X^\epsilon(t)|_{L^4}^2\bigr)^{\frac12}(T-t)^{-\gamma-\frac{\kappa}{2}}dt.
\end{align*}
It remains to use the condition that $\gamma\in(1-\aly,\gamy)$, thanks to Assumption~\ref{ass:regular}. Note that $\gamma+\kappa\le \gamy \le \frac12\le 1-\gamma$. Finally, thanks to moment estimates,
\[
|\mathcal{J}_{3,2}^\epsilon|\le C_{\gamma,\kappa}\epsilon\bigl(1+|(-A)^{\gamma+\kappa}x_0|_{L^8}^2+|(-A)^{\gamma+\kappa}y_0|_{L^8}^2\bigr)\bigl(1+|(-A)^{1-\gamma}x_0|_{L^4}+M_{1-\gamma,4}(Q^\frac12,T)\bigr).
\]

It remains to deal with the third term, $\mathcal{J}_{3,3}^{\epsilon}$. Note that the second-order derivative of $v$ with respect to $x$ satisfies the identity
\begin{align*}
D_x^2v(t,x,y).(h,h)&=\Phi\bigl(x,y,D_x^3\ub(t,x).(h,h,\cdot)\bigr)+2\langle D_x\Phi\bigl(x,y,D_x^2\ub(t,x).(h,\cdot)\bigr),h\rangle\\
&~+D_x^2\Phi\bigl(x,y,D_x\ub(t,x)\bigr).(h,h).
\end{align*}

First, observe that $|D_x^3\ub(t,x).(h,h,k)|\le C|h|_{L^4}^2|k|_{L^2}$, thanks to~\eqref{eq:ub_3}. Equivalently, this means that $|D_x^3\ub(t,x).(h,h,\cdot)|_{L^{2}}\le C|h|_{L^4}^2$, then Lemma~\ref{lem:Phi_0} yields
\[
\big|\Phi\bigl(x,y,D_x^3\ub(t,x).(h,h,\cdot)\bigr)\big|\le C(1+|y|_{L^2})|h|_{L^4}^2.
\]
Second, thanks to Lemma~\ref{lem:Phi_1},
\begin{align*}
\big|\langle D_x\Phi\bigl(x,y,D_x^2\ub(t,x).(h,\cdot)\bigr),h\rangle\big|&\le C(1+|y|_{L^4})|h|_{L^4}|D_x^2\ub(t,x).(h,\cdot)|_{L^{2}}\\
&\le C(1+|y|_{L^4})|h|_{L^4}^2.
\end{align*}
Finally, Lemma~\ref{lem:Phi_2} and~\eqref{eq:ub_1} yield
\begin{align*}
\big|D_x^2\Phi\bigl(x,y,D_x\ub(t,x)\bigr).(h,h)\big|&\le C(1+|y|_{L^4})|D_x\ub(t,x)|_{L^2}|h|_{L^8}^2\le C(1+|y|_{L^4})|h|_{L^4}^2.
\end{align*}

As a consequence, one obtains
\begin{align*}
|\mathcal{J}_{3,3}^\epsilon|&=\Big|\frac{\epsilon}{2} \int_{0}^{T}\sum_{n\in\N}q_n \E\bigl[D_x^2v(T-t,X^\epsilon(t),Y^\epsilon(t)).(f_n,f_n)\bigr]dt\Big|\\
&\le C\epsilon \sum_{n\in \N}q_n |f_n|_{L^4}^2 \int_{0}^{T}\bigl(1+\E|Y^\epsilon(t)|_{L^4}\bigr)dt\\
&\le C(T){\rm Tr}(Q)\epsilon(1+|y_0|_{L^4}),
\end{align*}
thanks to Assumption~\ref{ass:Q}, and a moment estimate, see Assumption~\ref{ass:Y}.

Gathering the estimates for $\mathcal{J}_{3,1}^\epsilon$, $\mathcal{J}_{3,2}^\epsilon$ and $\mathcal{J}_{3,3}^\epsilon$, one obtains
\[
|\mathcal{J}_{3}^\epsilon|\le C_{\gamma,\kappa}\epsilon\bigl(1+|(-A)^{1-\gamma}x_0|_{L^8}^4+|(-A)^{\gamma+\kappa}y_0|_{L^8}^4\bigr).
\]
This concludes the proof of Lemma~\ref{lem:J3}.
\end{proof}

We are now in position to conclude the proof of Theorem~\ref{th:weak_regular}.
\begin{proof}
Thanks to the decomposition~\eqref{eq:decomp_weak} of the weak error, it suffices to gather the estimates of Lemmas~\ref{lem:J1},~\ref{lem:J2} and~\ref{lem:J3} to conclude.
\end{proof}

To conclude this section, we provide a sketch of proof of Proposition~\ref{propo:ub}, which states the regularity properties for the spatial derivatives of $\ub(t,x)$ used above.

\begin{proof}[Sketch of proof of Proposition~\ref{propo:ub}]
The proof is based on computing the derivatives of $\ub$ in terms of tangent processes, which are solutions of PDEs with random coefficients (noise is additive in~\eqref{eq:av}). A mild formulation and regularity properties of the semigroup $(e^{tA})_{t\ge 0}$ yield the required estimates.
\begin{itemize}
\item First-order derivative.

Note that
\[
D_x\ub(t,x).h=\E\bigl[D\varphi(\Xb^x(t)).\eta^h(t)\bigr],
\]
where
\[
\eta^h(t)=e^{tA}h+\int_{0}^{t}e^{(t-s)A}D\Fb(\Xb^x(s)).\eta^h(s)ds.
\]
Thanks to the global Lipschitz continuity of the averaged coefficient $\Fb$, one obtains, for all $t\in(0,T]$,
\[
|\eta^h(t)|_{L^2}\le C_{p,\beta} t^{-\beta}|(-A)^{-\beta}h|_{L^2}+\int_{0}^{t}|\eta^h(s)|_{L^2}ds.
\]
An application of Gronwall Lemma yields
\[
|\eta^h(t)|_{L^2}\le C_{\beta}(T)t^{-\beta}|(-A)^{-\beta}h|_{L^2},
\]
hence~\eqref{eq:ub_1}


\item Second-order derivative.

Note that
\begin{align*}
D_x^2\ub(t,x).(h_1,h_2)&=\E\bigl[\langle D\varphi(\Xb^x(t)),\zeta^{h_1,h_2}(t)\rangle\bigr]\\
&~+\E\bigl[D^2\varphi(\Xb^x(s)).(\eta^{h_1}(t),\eta^{h_2}(t))\bigr],
\end{align*}
where
\[
\zeta^{h_1,h_2}(t)=\int_{0}^{t}e^{(t-s)A}D\Fb(\Xb^x(s)).\zeta^{h_1,h_2}(s)ds+\int_{0}^{t}e^{(t-s)A}D^2\Fb(\Xb^x(s)).(\eta^{h_1}(s),\eta^{h_2}(s))ds.
\]
First, $\varphi$ is an admissible test function, thus one obtains
\[
\E\bigl[D^2\varphi(\Xb^x(s)).(\eta^{h_1}(t),\eta^{h_2}(t))\bigr]\le C\E\bigl[|\eta^{h_1}(t)|_{L^2}|\eta^{h_2}(t)|_{L^2}\bigr],
\]
which is treated using the estimate proved above.

To treat the second term, the inequality $\|e^{tA}\|_{\mathcal{L}(L^1,L^2)}\le C_d t^{-\frac{d}{4}}$ is used. This may be proved as follows. First, by a duality argument, $\|e^{tA}\|_{\mathcal{L}(L^1,L^2)}=\|e^{tA}\|_{\mathcal{L}(L^2,L^\infty)}$. In addition, by Jensen inequality, one has
\[
e^{tA}x(\xi)^2=\bigl(\int K(t,\xi,\eta)x(\eta)d\eta\bigr)^2\le \int K(t,\xi,\eta)x(\eta)^2d\eta \le C_dt^{-\frac{d}{2}}|x|_{L^2}^2,
\]
where $K$ is the kernel associated with the semigroup. As a consequence, one has $\|e^{tA}\|_{\mathcal{L}(L^2,L^\infty)}\le C_dt^{-\frac{d}{4}}$. Note that $\frac{d}{4}<1$ if $d\in\left\{1,2,3\right\}$.

Thanks to~\eqref{eq:propoFb1} and~\eqref{eq:propoFb2}, and the estimate above, one obtains, with the application of Gronwall Lemma,
\begin{align*}
|\zeta^{h_1,h_2}(t)|_{L^2}&\le C_p\int_{0}^{t}|\zeta^{h_1,h_2}(s)|_{L^2}ds+C_{\beta_1,\beta_2}\int_{0}^{t}(t-s)^{-\frac{d}{4}}s^{-\beta_1-\beta_2}ds|(-A)^{-\beta_1}h_1|_{L^2}|(-A)^{-\beta_2}h_2|_{L^2}\\
&\le C_{\beta_1,\beta_2}(T)|(-A)^{-\beta_1}h_1|_{L^{2}}|(-A)^{-\beta_2}h_2|_{L^{2}}.
\end{align*}
It is then straightforward to obtain~\eqref{eq:ub_2}.

\item Third-order derivative: the proof is omitted, since the computations are similar.
\end{itemize}
\end{proof}

\section{Proof of Theorem~\ref{th:general}}\label{sec:proof_general}

This section is devoted to the proof of Theorem~\ref{th:general}. Let Assumption~\ref{ass:general} be satisfied. First, let us justify the definition of
\[
\aly=\frac{1}{2}\bigl(1-\frac{d}{2}(1-\frac{2}{\varrho})\bigr).
\]

\begin{propo}\label{propo:suffQ}
Let Assumption~\ref{ass:general} be satisfied. Then~\eqref{eq:assQ} is satisfied: for all $\alpha\in[0,\aly)$ and all $p\ge 2$,
\[
M_{\alpha,p}(Q^{\frac12},T)<\infty.
\]
\end{propo}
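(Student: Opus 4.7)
The plan is to show that the integrand $\|(-A)^\alpha e^{tA} Q^{\frac12}\|_{\mathcal{R}(L^2,L^p)}^2$ is controlled by a power of $t$ of the form $t^{-2\alpha - d/s}$, with $s = \frac{2\varrho}{\varrho-2}$, times a constant depending on $\bigl(\sum_n q_n^{\varrho/2}\bigr)^{2/\varrho}$ and on $p$; then the integrability on $(0,T)$ reduces to the condition $2\alpha + d/s < 1$, which is exactly $\alpha < \aly$. To separate the singularity at $t=0$ coming from $(-A)^\alpha$ from the more delicate $\gamma$-radonifying norm, I would first factor $(-A)^\alpha e^{tA} = (-A)^\alpha e^{tA/2}\circ e^{tA/2}$ and use the ideal property recalled in Section~\ref{sec:setting}:
\[
\|(-A)^\alpha e^{tA} Q^{\frac12}\|_{\mathcal{R}(L^2,L^p)} \le \|(-A)^\alpha e^{tA/2}\|_{\mathcal{L}(L^p,L^p)}\,\|e^{tA/2} Q^{\frac12}\|_{\mathcal{R}(L^2,L^p)}.
\]
The first factor is bounded by $C t^{-\alpha}$ by the standard analytic semigroup estimate, so the task is to obtain a $t^{-d/(2s)}$ bound on $\|e^{tA/2} Q^{\frac12}\|_{\mathcal{R}(L^2,L^p)}$.

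For the second factor, I would apply the inequality $\|\Psi\|_{\mathcal{R}(L^2,L^p)}^2 \le c_p \bigl|\sum_n(\Psi f_n)^2\bigr|_{L^{p/2}}$ with $\Psi = e^{tA/2} Q^{\frac12}$ and with $(f_n)_{n\in\N}$ the $Q$-eigenbasis from Assumption~\ref{ass:Q}, so that $\Psi f_n = \sqrt{q_n}\, e^{tA/2} f_n$. Writing $(e^{tA/2} f_n)(\xi) = \langle K(t/2,\xi,\cdot), f_n\rangle_{L^2}$ produces the pointwise identity
\[
\sum_n q_n\,(e^{tA/2} f_n)(\xi)^2 = \bigl|Q^{\frac12} K(t/2,\xi,\cdot)\bigr|_{L^2}^2.
\]
To bring in the $\ell^{\varrho/2}$ summability of $(q_n)$, I would apply H\"older in $n$ with conjugate exponents $\varrho/2$ and $\varrho/(\varrho-2)$: for every $\varphi\in L^2$,
\[
|Q^{\frac12}\varphi|_{L^2}^2 = \sum_n q_n\langle f_n,\varphi\rangle^2 \le \Bigl(\sum_n q_n^{\varrho/2}\Bigr)^{2/\varrho}\Bigl(\sum_n|\langle f_n,\varphi\rangle|^s\Bigr)^{2/s},
\]
with $s = 2\varrho/(\varrho-2) \in [2,\infty]$ by Assumption~\ref{ass:general}.

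The remaining sum over $n$ is handled by a Hausdorff-Young inequality for the bounded orthonormal system $(f_n)$: the map $T\varphi = (\langle f_n,\varphi\rangle)_n$ is bounded from $L^2$ to $\ell^2$ by Parseval and from $L^1$ to $\ell^\infty$ by the uniform bound $\sup_n |f_n|_{L^\infty}<\infty$ of Assumption~\ref{ass:Q}, so Riesz-Thorin complex interpolation yields $T\colon L^{s'}\to\ell^s$ bounded, where $s' = 2\varrho/(\varrho+2)$ is the H\"older conjugate of $s$. Applied with $\varphi = K(t/2,\xi,\cdot)$, and combined with the uniform estimate
\[
\sup_{\xi\in\mathcal{D}}|K(t/2,\xi,\cdot)|_{L^{s'}_\eta} \le Ct^{-d/(2s)},
\]
which follows from the Gaussian upper bound on $K$ recalled in Section~\ref{sec:setting} (or, equivalently, from the semigroup estimate $\|e^{tA/2}\|_{\mathcal{L}(L^s,L^\infty)}\le Ct^{-d/(2s)}$), this yields a pointwise bound on $\sum_n q_n\,(e^{tA/2} f_n)(\xi)^2$ that is uniform in $\xi\in\mathcal{D}$. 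Taking the $L^{p/2}_\xi$ norm (which on the bounded domain $\mathcal{D}$ only produces a multiplicative constant) and assembling with the initial factorization leads to
\[
\|(-A)^\alpha e^{tA} Q^{\frac12}\|_{\mathcal{R}(L^2,L^p)}^2 \le C_p\Bigl(\sum_n q_n^{\varrho/2}\Bigr)^{2/\varrho} t^{-2\alpha - d/s}.
\]
Since $d/s = \tfrac{d}{2}(1-\tfrac{2}{\varrho})$, the exponent $-(2\alpha + d/s)$ exceeds $-1$ exactly when $\alpha < \tfrac12\bigl(1 - \tfrac{d}{2}(1-\tfrac{2}{\varrho})\bigr) = \aly$, giving the announced integrability on $(0,T)$.

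The main obstacle I anticipate is the Hausdorff-Young step, which is not spelled out in the paper but is the natural bridge between the $L^\infty$ boundedness of $(f_n)$ and the $\ell^{\varrho/2}$ summability of $(q_n)$; everything else in the plan is a routine combination of the ideal property, analytic semigroup regularization, and the Gaussian heat kernel bound already recalled in Section~\ref{sec:setting}. The endpoint cases $\varrho=2$ (where $s=\infty$ and Hausdorff-Young degenerates to the trivial $L^1\to\ell^\infty$ bound) and, in dimension $d=1$, $\varrho=\infty$ (where $s=2$ and Hausdorff-Young reduces to Parseval) deserve a brief check consistent with the conventions stated in Assumption~\ref{ass:general}, but present no actual difficulty.
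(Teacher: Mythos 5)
Your argument is correct, and its skeleton coincides with the paper's proof of Proposition~\ref{propo:suffQ}: the same factorization $(-A)^\alpha e^{tA}=(-A)^\alpha e^{tA/2}\circ e^{tA/2}$ with the ideal property, the same reduction of $\|e^{tA/2}Q^{1/2}\|_{\mathcal{R}(L^2,L^p)}^2$ to the $L^{p/2}$ norm of $\sum_n q_n (e^{tA/2}f_n)^2$, and the same H\"older step in $n$ with exponents $\varrho/2$ and $\varsigma=\varrho/(\varrho-2)$, leading to the exponent $-2\alpha-\tfrac{d}{2\varsigma}=-2\alpha-\tfrac{d}{s}$ and hence to the same threshold $\aly$. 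The single point of divergence is the step you flag as the main obstacle: to control $\bigl(\sum_n|\langle f_n,K(t,\xi,\cdot)\rangle|^{2\varsigma}\bigr)^{1/\varsigma}$ you invoke a Hausdorff--Young inequality for the bounded orthonormal system $(f_n)$, obtained by Riesz--Thorin between $L^2\to\ell^2$ and $L^1\to\ell^\infty$, together with the bound $\sup_\xi|K(t,\xi,\cdot)|_{L^{s'}}\le Ct^{-d/(2s)}$. The paper avoids complex interpolation entirely and uses the elementary pointwise estimate $\sum_n a_n^{2\varsigma}\le(\sup_k a_k^2)^{\varsigma-1}\sum_n a_n^2$ with $a_n=e^{tA}f_n(\xi)$, combined with $\sup_k|e^{tA}f_k(\xi)|\le\sup_k|f_k|_{L^\infty}\int K(t,\xi,\cdot)\le C$ and the exact Parseval identity $\sum_n(e^{tA}f_n)^2(\xi)=|K(t,\xi,\cdot)|_{L^2}^2\le Ct^{-d/2}$. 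The two devices use the same two hypotheses (uniform $L^\infty$ bound on $(f_n)$ and the Gaussian kernel bound) and produce the identical power of $t$; the paper's version is the more self-contained, while yours isolates a reusable operator-interpolation statement and makes the endpoint cases $\varrho=2$ and $\varrho=\infty$ transparent, as you note. No gap.
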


\begin{proof}[Proof of Proposition~\ref{propo:suffQ}]
Let $\varsigma=\frac{\varrho}{\varrho-2}>1$, and note that $1=\frac{2}{\varrho}+\frac{1}{\varsigma}$.

Using the ideal property for $\gamma$-Radonifying operators,
\begin{align*}
\int_0^T\|e^{tA}(-A)^\alpha Q^{\frac12}\|_{\mathcal{R}(L^2,L^p)}^2dt&\le \int_0^T\|e^{\frac{t}{2}A}(-A)^{\alpha}\|_{\mathcal{L}(L^p,L^p)}^2\|e^{\frac{t}{2}A}Q^{\frac12}\|_{\mathcal{R}(L^2,L^p)}^2dt\\
&\le C_{\alpha,p}\int_0^T t^{-2\alpha} \big|\sum_{n\in\N}q_n (e^{tA}f_n)^2\big|_{L^{\frac{p}{2}}}dt.
\end{align*}

Using H\"older inequality, for all $\xi\in\mathcal{D}$, and all $t>0$,
\begin{align*}
\sum_{n\in\N}q_n\bigl(e^{tA}f_n\bigr)^2(\xi)&\le \bigl(\sum_{n\in\N}q_n^{\frac{\varrho}{2}}\bigr)^{\frac{2}{\varrho}} \Bigl(\sum_{n\in\N}\bigl(e^{tA}f_n\bigr)^{2\varsigma}(\xi)\Bigr)^{\frac{1}{\varsigma}}\\
&\le C(Q)\Bigl(\underset{k\in\N}\sup~\bigl(e^{tA}f_k\bigr)^{\frac{2(\varsigma-1)}{\varsigma}}(\xi)\Bigr)\Bigl(\sum_{n\in\N}\bigl(e^{tA}f_n\bigr)^{2}(\xi)\Bigr)^{\frac{1}{\varsigma}}.
\end{align*}

Recall that $K$ is the kernel associated with the semigroup $\bigl(e^{tA}\bigr)_{t\ge 0}$.

On the one hand, Assumption~\ref{ass:Q} implies that for all $z\in\mathcal{D}$,
\[
\underset{k\in\mathbb{N}}\sup~|e^{tA}f_k (\xi)|\le \int_{\mathcal{D}}K(t,z,\cdot) \underset{k\in\mathbb{N}}\sup~|f_k|_{L^\infty}\le C.
\]
On the other hand, $\bigl(f_n\bigr)_{n\in\mathbb{N}}$ is a complete orthonormal system of $L^2$, hence
\begin{align*}
\sum_{n\in\N}\bigl(e^{tA}f_n\bigr)^{2}(\xi)&=\sum_{n\in\N}\langle K(t,\xi,\cdot),f_n\rangle^2=|K(t,\xi,\cdot)|_{L^2}^2\\
&=\int_{\mathcal{D}}K(t,z,\eta)^2d\eta\\
&\le Ct^{-\frac{d}{2}}\int_{\mathcal{D}}K(t,\xi,\eta)d\eta=Ct^{-\frac{d}{2}},
\end{align*}
using the properties of the kernel $K$ stated above.

Finally, for all $t>0$ and all $z\in\mathcal{D}$, one obtains
\[
\big|\sum_{n\in\N}q_n (e^{tA}f_n)^2\big|_{L^{\frac{p}{2}}}\le Ct^{-\frac{d}{2\varsigma}},
\]
thus
\[
\int_0^T\|e^{tA}(-A)^\alpha Q^{\frac12}\|_{\mathcal{R}(L^2,L^p)}^2dt\le C\int_0^T t^{-2\alpha-\frac{d}{2\varsigma}}dt.
\]
It remains to check that $2\alpha-\frac{d}{2\varsigma}=2\alpha-\frac{d}{2}(1-\frac{2}{\varrho})<1$ for $\alpha<\aly=\frac{1}{2}\bigl(1-\frac{d}{2}(1-\frac{2}{\varrho})\bigr)$.

This concludes the proof of Proposition~\ref{propo:suffQ}. 
\end{proof}

The approximation argument is based on the following estimate.
\begin{lemma}\label{lem:regul}
Let Assumption~\ref{ass:general} be satisfied. For all $\alpha\in[0,\aly)$, $\gamma\in[0,\gamy)$, such that $\alpha\ge \gamma$ and $\alpha+\gamma\le 1$, all $T\in(0,\infty)$ and $p\ge 2$, there exists $C_{\alpha,\gamma,p}(Q,T)\in(0,\infty)$, such that for all $\delta>0$,
\begin{equation}
{\rm Tr}(e^{2\delta A}Q)+M_{1-\gamma,p}(e^{\delta A}Q^{\frac12},T)\le C_{\alpha,\gamma,p}(Q,T)\delta^{\alpha+\gamma-1}.
\end{equation}
\end{lemma}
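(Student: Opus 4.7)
The plan is to bound the two summands on the left-hand side separately, in each case by $C\delta^{\alpha+\gamma-1}$. The common idea is to exploit the regularization provided by the auxiliary $e^{\delta A}$ to extract a power of $\delta$, while control of the residual operator falls under Assumption~\ref{ass:Q}. The hypothesis $\alpha+\gamma\le 1$ is used to ensure that the exponents of $(-A)$ appearing under the semigroup are non-negative, so that the standard regularization bound $\|(-A)^{\sigma}e^{\tau A}\|_{\mathcal{L}(L^{p},L^{p})}\le C\tau^{-\sigma}$ is applicable.

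For the integral term $M_{1-\gamma,p}(e^{\delta A}Q^{1/2},T)$, the first move is the factorization
\[
(-A)^{1-\gamma}e^{(t+\delta)A}Q^{1/2}=\bigl((-A)^{1-\gamma-\alpha}e^{\delta A}\bigr)\bigl((-A)^{\alpha}e^{tA}Q^{1/2}\bigr).
\]
Regularization supplies $\|(-A)^{1-\gamma-\alpha}e^{\delta A}\|_{\mathcal{L}(L^{p},L^{p})}\le C\delta^{\alpha+\gamma-1}$, and the ideal property of $\mathcal{R}(L^{2},L^{p})$ then yields $\|(-A)^{1-\gamma}e^{(t+\delta)A}Q^{1/2}\|_{\mathcal{R}(L^{2},L^{p})}^{2}\le C\delta^{2(\alpha+\gamma-1)}\|(-A)^{\alpha}e^{tA}Q^{1/2}\|_{\mathcal{R}(L^{2},L^{p})}^{2}$. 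Integrating over $t\in[0,T]$ and invoking \eqref{eq:assQ} (which applies because $\alpha<\aly$) closes this part, with $M_{1-\gamma,p}(e^{\delta A}Q^{1/2},T)\le C\delta^{\alpha+\gamma-1}M_{\alpha,p}(Q^{1/2},T)$.

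The trace $\mathrm{Tr}(e^{2\delta A}Q)=\|e^{\delta A}Q^{1/2}\|_{\mathcal{R}(L^{2},L^{2})}^{2}$ is more delicate, since there is no outer time integral to exploit. I would first convert it to an integrated form through the identity $\tfrac{d}{ds}\|e^{sA}Q^{1/2}\|_{\mathcal{R}(L^{2},L^{2})}^{2}=-2\|(-A)^{1/2}e^{sA}Q^{1/2}\|_{\mathcal{R}(L^{2},L^{2})}^{2}$, integrated from $s=\delta$ to $s=\delta+T$:
\[
\|e^{\delta A}Q^{1/2}\|_{\mathcal{R}(L^{2},L^{2})}^{2}=\|e^{(\delta+T)A}Q^{1/2}\|_{\mathcal{R}(L^{2},L^{2})}^{2}+2\int_{\delta}^{\delta+T}\|(-A)^{1/2}e^{sA}Q^{1/2}\|_{\mathcal{R}(L^{2},L^{2})}^{2}\,ds.
\]
The boundary term is bounded independently of $\delta$ by the monotonicity of $s\mapsto\|e^{sA}Q^{1/2}\|_{\mathcal{R}(L^{2},L^{2})}^{2}$ and \eqref{eq:assQ} with $\alpha=0$, and is therefore absorbed into $C\delta^{\alpha+\gamma-1}$. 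To handle the integrand I would set $\tilde\alpha=(\alpha+\gamma)/2$; the constraints $\alpha<\aly$, $\gamma<\gamy\le\aly$ and $\aly\le\tfrac12$ give $0\le\tilde\alpha<\aly$ and $\tfrac12-\tilde\alpha\ge 0$. The splitting
\[
(-A)^{1/2}e^{sA}Q^{1/2}=\bigl((-A)^{1/2-\tilde\alpha}e^{sA/2}\bigr)\bigl((-A)^{\tilde\alpha}e^{sA/2}Q^{1/2}\bigr)
\]
combined with regularization gives $\|(-A)^{1/2}e^{sA}Q^{1/2}\|_{\mathcal{R}(L^{2},L^{2})}^{2}\le Cs^{-(1-2\tilde\alpha)}\|(-A)^{\tilde\alpha}e^{sA/2}Q^{1/2}\|_{\mathcal{R}(L^{2},L^{2})}^{2}$. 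Since $1-2\tilde\alpha\ge 0$, on $[\delta,\delta+T]$ the factor $s^{-(1-2\tilde\alpha)}$ is bounded above by $\delta^{-(1-2\tilde\alpha)}=\delta^{\alpha+\gamma-1}$; after the substitution $r=s/2$ the remaining integral is at most $M_{\tilde\alpha,2}(Q^{1/2},T)^{2}<\infty$ by \eqref{eq:assQ}.

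The main obstacle is producing the sharp exponent $\delta^{\alpha+\gamma-1}$ for the trace: unlike for the first term, there is no outer time variable to exploit from the outset. The trick is the conversion of the trace into a time integral via the derivative identity, combined with the splitting $\tfrac12=(\tfrac12-\tilde\alpha)+\tilde\alpha$ in which the first half extracts the power of $\delta$ and the second half uses the regularity afforded by Assumption~\ref{ass:Q}; the arithmetic identity $2\tilde\alpha-1=\alpha+\gamma-1$ is exactly what produces the target exponent, and the constraint $\alpha+\gamma\le 1$ is precisely what keeps both parts of the splitting at non-negative powers.
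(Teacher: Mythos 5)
Your proof is correct. The first half (the bound on $M_{1-\gamma,p}(e^{\delta A}Q^{1/2},T)$ via the factorization $(-A)^{1-\gamma}e^{(t+\delta)A}Q^{1/2}=(-A)^{1-\gamma-\alpha}e^{\delta A}\cdot(-A)^{\alpha}e^{tA}Q^{1/2}$, the ideal property and the regularization estimate, using $\alpha+\gamma\le 1$) is exactly the paper's argument. For the trace term, however, you take a genuinely different route. The paper stays at the spectral level: it writes ${\rm Tr}(e^{2\delta A}Q)\le\|e^{2\delta A}\|_{\mathcal{L}_{\varsigma}(L^2)}\|Q\|_{\mathcal{L}_{\varrho/2}(L^2)}$ by the H\"older inequality for Schatten norms, uses the hypothesis $\sum_n q_n^{\varrho/2}<\infty$ from Assumption~\ref{ass:general} directly, and estimates $\|e^{2\delta A}\|_{\mathcal{L}_\varsigma}^{\varsigma}\le\sum_n e^{-2\varsigma\delta\lambda_n}\le C\delta^{-d/2}$ from the Weyl asymptotics $\lambda_n\sim c_d n^{2/d}$, arriving at $C\delta^{2\aly-1}\le C\delta^{\alpha+\gamma-1}$. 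You instead convert the trace into a time integral via the energy identity $\frac{d}{ds}\|e^{sA}Q^{1/2}\|_{\mathcal{L}_2}^2=-2\|(-A)^{1/2}e^{sA}Q^{1/2}\|_{\mathcal{L}_2}^2$ and then reduce everything to the already-established condition~\eqref{eq:assQ} through the splitting $\tfrac12=(\tfrac12-\tilde\alpha)+\tilde\alpha$ with $\tilde\alpha=(\alpha+\gamma)/2$, which indeed satisfies $\tilde\alpha\le\alpha<\aly\le\tfrac12$, so both the regularization factor and $M_{\tilde\alpha,2}(Q^{1/2},\cdot)$ are available; the exponent $2\tilde\alpha-1=\alpha+\gamma-1$ comes out as claimed, and the boundary term is controlled by monotonicity and $M_{0,2}(Q^{1/2},T)<\infty$. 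Your version is more self-contained (it never invokes Schatten classes or the eigenvalue asymptotics, only~\eqref{eq:assQ}, so it would transfer to any covariance satisfying~\eqref{eq:assQ} with $\aly\le\tfrac12$), at the cost of a marginally weaker exponent ($2\tilde\alpha-1$ with $\tilde\alpha<\aly$ rather than the endpoint $2\aly-1$), which is immaterial for the stated lemma. Two small points to tidy up: the bound $\int_{\delta/2}^{(\delta+T)/2}\|(-A)^{\tilde\alpha}e^{rA}Q^{1/2}\|_{\mathcal{L}_2}^2dr\le M_{\tilde\alpha,2}(Q^{1/2},T)^2$ should be justified for $\delta>T$ by the monotonicity of the integrand (the interval has length $T/2$), and the absorption of the $\delta$-independent boundary term into $C\delta^{\alpha+\gamma-1}$ implicitly restricts to bounded $\delta$ --- but the paper's own final step has the same restriction, and only small $\delta$ is used downstream.
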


\begin{proof}[Proof of Lemma~\ref{lem:regul}]
First, note that
\[
M_{1-\gamma,p}(e^{\delta A}Q^{\frac12},T)\le \|(-A)^{1-\gamma-\alpha}e^{\delta A}\|_{\mathcal{L}(L^p,L^p)}M_{\alpha,p}(Q^{\frac12},T),
\]
and that $\|(-A)^{1-\gamma+\alpha}e^{\delta A}\|_{\mathcal{L}(L^p,L^p)}\le C_{\alpha,\gamma}\delta t^{\gamma+\alpha-1}$, in the regime $\alpha+\gamma\le 1$.

To deal with the trace term, we use the H\"older type inequality for Schatten norms $\|\cdot\|_{\mathcal{L}_\varrho(L^2)}$, with parameter $\varrho\in[1,\infty]$, see for instance~\cite[Corollary~D.2.4, Appendix D]{HytonenVanNeervenVeraar1}. One obtains
\begin{align*}
{\rm Tr}(e^{2\delta A}Q)&=\|e^{2\delta A}Q\|_{\mathcal{L}_1(L^2)}\le \|e^{2\delta tA}\|_{\mathcal{L}_{\varsigma}(L^2)}\|Q\|_{\mathcal{L}_{\frac{\varrho}{2}}(L^2)},
\end{align*}
where $1=\frac{2}{\varrho}+\frac{1}{\varsigma}$. By assumption, $\|Q\|_{\mathcal{L}_{\frac{\varrho}{2}}(L^2)}<\infty$. In addition,
\[
\|e^{2\delta tA}\|_{\mathcal{L}_{\varsigma}(L^2)}^{\varsigma}\le \sum_{n\in\N}|e^{2\delta A}e_n|_{L^2}^{\varsigma}\le \sum_{n\in\N}e^{-2\varsigma \delta \lambda_n}\le C_\varsigma \delta^{-\frac{d}{2}},
\]
using $\lambda_n\underset{n\to\infty}\sim c_d n^{\frac{2}{d}}$. As a consequence,
\[
{\rm Tr}(e^{2\delta A}Q)\le C_\varsigma \delta^{-\frac{d}{2\varsigma}}=C_\varsigma \delta^{2\aly-1}\le C_\varsigma \delta^{2\alpha-1},
\]
using the definition of $\aly=\frac{1}{2}\bigl(1-\frac{d}{2}(1-\frac{2}{\varrho})\bigr)=\frac{1}{2}\bigl(1-\frac{d}{2\varsigma}\bigr)$.

Finally, one concludes using $2\alpha-1\le \alpha+\gamma-1\le 0$.
\end{proof}

The result of Lemma~\ref{lem:regul} motivates the introduction of the following auxiliary SPDE problems, where $Q^{\frac12}$ is replaced by $e^{\delta A}Q^\frac12$. For all $\delta>0$ (this parameter will be chosen below), $X_{\delta}^{\epsilon}$ and $\Xb_\delta$ are solutions of
\begin{equation}\label{eq:SPDE_delta}
\begin{aligned}
dX_{\delta}^{\epsilon}(t)&=AX_{\delta}^{\epsilon}(t)dt+F\bigl(X_{\delta}^{\epsilon}(t),Y^\epsilon(t)\bigr)dt+e^{\delta A}dW^Q(t),\\
d\Xb_\delta(t)&=A\Xb_\delta(t)dt+\Fb(\Xb(t))dt+e^{\delta A}dW^Q(t),
\end{aligned}
\end{equation}
with initial conditions $X_\delta^\epsilon(0)=\Xb_\delta=x_0$.

Then Theorem~\ref{th:general} follows from Lemmas~\ref{lem:general1} and~\ref{lem:general2} stated below.

First, thanks to Lemma~\ref{lem:regul}, the strong and weak convergence results, with orders $\frac12$ and $1$, from Theorems~\ref{th:strong_regular} and~\ref{th:weak_regular} may be applied when considering the auxiliary processes $X_\delta^\epsilon$ and $\Xb_\delta$ defined by~\eqref{eq:SPDE_delta}.
\begin{lemma}\label{lem:general1}
Let Assumption~\ref{ass:general} be satisfied. Let $T\in(0,\infty)$, and assume that the initial conditions $x_0$, $y_0$, satisfy
\[
|(-A)^{1-\gamma}x_0|_{L^8}+|(-A)^{\gamma+\kappa}y_0|_{L^8}<\infty,
\]
with $\gamma\in(0,\gamy)$ and $\kappa\in(0,\gamy-\gamma)$.

Let $\varphi$ be an admissible test function.

For all $\alpha\in(0,\aly)$, there exist $C_{\alpha,\gamma}(T,x_0,y_0,Q)\in(0,\infty)$ and $C_{\alpha,\gamma}(T,x_0,y_0,Q,\varphi)\in(0,\infty)$, such that for all $\epsilon\in(0,1)$ and $\delta>0$,
\[
\underset{t\in[0,T]}\sup~\bigl(\E|X_\delta^\epsilon(t)-\Xb_\delta(t)|_{L^2}^2\bigr)^{\frac12}\le C_{\alpha,\gamma}(T,x_0,y_0,Q)\delta^{-\frac{1-\alpha-\gamma}{2}}\epsilon^{\frac12}
\]
and
\[
\underset{t\in[0,T]}\sup~|\E[\varphi(X_\delta^\epsilon(t))]-\E[\varphi(\Xb_\delta(t))]|\le C_{\alpha,\gamma}(T,x_0,y_0,Q,\varphi)\delta^{-(1-\alpha-\gamma)}\epsilon.
\]
\end{lemma}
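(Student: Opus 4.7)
The plan is to view the auxiliary system~\eqref{eq:SPDE_delta} as an instance of~\eqref{eq:SPDE}--\eqref{eq:av} with covariance operator replaced by the smoothed operator $\tilde Q = e^{2\delta A}Q$ (so $\tilde Q^{1/2}=e^{\delta A}Q^{1/2}$), and to apply the regular-case estimates of Theorems~\ref{th:strong_regular} and~\ref{th:weak_regular} directly to this modified system. Smoothing by $e^{\delta A}$ forces $\tilde Q$ into the framework of Assumption~\ref{ass:regular}; the price to pay is constants that blow up as $\delta\to 0$, at a rate already quantified by Lemma~\ref{lem:regul}.

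First I would verify that Assumption~\ref{ass:regular} holds for $\tilde Q$. The analytic-semigroup bound $\|(-A)^{\alpha-\alpha_0}e^{\delta A}\|_{\mathcal{L}(L^p,L^p)}\le C_{\alpha,\alpha_0}\delta^{\alpha_0-\alpha}$, valid for $\alpha>\alpha_0\in[0,\aly)$, together with Proposition~\ref{propo:suffQ}, gives $M_{\alpha,p}(\tilde Q^{1/2},T)<\infty$ for every $\alpha\ge 0$; the regularity parameter associated with $\tilde Q$ may therefore be taken arbitrarily large, and in particular strictly greater than $1-\gamy$. The trace condition ${\rm Tr}(\tilde Q)<\infty$ is part of Lemma~\ref{lem:regul}.

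Next, applying Theorem~\ref{th:strong_regular} to the modified system, with the same $\gamma\in(0,\gamy)$ and $\kappa\in(0,\gamy-\gamma)$ as in the hypotheses of Lemma~\ref{lem:general1}, yields the mean-square error estimate
\[
\underset{t\in[0,T]}\sup~\bigl(\E|X_\delta^\epsilon(t)-\Xb_\delta(t)|_{L^2}^2\bigr)^{1/2}\le C(T,x_0,y_0)\bigl({\rm Tr}(e^{2\delta A}Q)+M_{1-\gamma,8}(e^{\delta A}Q^{1/2},T)\bigr)^{1/2}\epsilon^{1/2}.
\]
Invoking Lemma~\ref{lem:regul} with parameters $(\max(\alpha,\gamma),\gamma)\in[0,\aly)\times[0,\gamy)$, whose sum is at most $2\aly\le 1$ by Assumption~\ref{ass:general}, bounds the right-hand $Q$-factor by $C_{\alpha,\gamma}(Q,T)\,\delta^{\max(\alpha,\gamma)+\gamy-1}$, which is controlled (up to a constant for $\delta$ bounded away from $0$) by $\delta^{\alpha+\gamma-1}$. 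Taking square roots then produces the strong estimate. The weak estimate follows by the identical procedure using Theorem~\ref{th:weak_regular} and $M_{1-\gamma,4}$ in place of $M_{1-\gamma,8}$; the absence of the square root in the weak statement directly produces the exponent $\delta^{-(1-\alpha-\gamma)}$.

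The main subtlety to check, and the only real obstacle, is that the constants $C(T,x_0,y_0)$ and $C(T,x_0,y_0,\varphi)$ appearing in Theorems~\ref{th:strong_regular} and~\ref{th:weak_regular} do not conceal further $\delta$-dependent $Q$-factors through the moment estimates used in their proofs. Inspecting Lemmas~\ref{lem:I3} and~\ref{lem:J3}, every high-regularity quantity of the form $M_{1-\gamma,\cdot}(Q^{1/2},T)$ is already exhibited in the final error bound; the remaining moments used in the proofs live at regularities $\gamma+\kappa<\gamy\le\aly$ and involve only $M_{\gamma+\kappa,p}(e^{\delta A}Q^{1/2},T)\le M_{\gamma+\kappa,p}(Q^{1/2},T)<\infty$, which is finite and uniform in $\delta$ because $e^{\delta A}$ is a contraction on $L^p$.
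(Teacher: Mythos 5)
Your proposal follows exactly the paper's proof: the paper disposes of Lemma~\ref{lem:general1} in a single line, as ``a straightforward application of Theorems~\ref{th:strong_regular} and~\ref{th:weak_regular} combined with Lemma~\ref{lem:regul}'', which is precisely your argument. The additional checks you carry out --- that the smoothed covariance fits Assumption~\ref{ass:regular}, that the case $\alpha<\gamma$ is handled by passing to $\max(\alpha,\gamma)$ before invoking Lemma~\ref{lem:regul}, and that no hidden $\delta$-dependence enters through the moment estimates --- supply detail the paper omits.
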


Second, the distances between $X_\delta^\epsilon$ and $X^\epsilon$, and between $\Xb_\delta$ and $\Xb$, are estimated in the following result, using standard arguments.
\begin{lemma}\label{lem:general2}
Let Assumption~\ref{ass:general} be satisfied. Let $T\in(0,\infty)$, and assume that $x_0\in L^2$ and $y_0\in L^2$. Let $\varphi$ be an admissible test function. Let $\alpha\in[0,\aly)$. There exist $C_\alpha(T,x_0,y_0,Q)\in(0,\infty)$ and $C_\alpha(T,x_0,y_0,Q,\varphi)\in(0,\infty)$ such that for all $\epsilon\in(0,1)$ and $\delta>0$, one has 
\[
\underset{t\in[0,T]}\sup~\bigl(\E|X_\delta^\epsilon(t)-X^\epsilon(t)|_{L^2}^2\bigr)^{\frac12}+\underset{t\in[0,T]}\sup~\bigl(\E|\Xb_\delta(t)-\Xb(t)|_{L^2}^2\bigr)^{\frac12}\le C_{\alpha}(T,x_0,y_0,Q)\delta^\alpha.
\]
and
\[
\underset{t\in[0,T]}\sup~|\E[\varphi(X_\delta^\epsilon(t))]-\E[\varphi(X^\epsilon(t))]|+\underset{t\in[0,T]}\sup~|\E[\varphi(\Xb_\delta(t))]-\E[\varphi(\Xb(t))]|\le C_\alpha(T,x_0,y_0,Q,\varphi)\delta^{2\alpha}.
\]

\end{lemma}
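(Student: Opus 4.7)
First I isolate the noise modification. The differences $\Xb_\delta - \Xb$ and $X_\delta^\epsilon - X^\epsilon$ satisfy mild equations in which the drift contribution is Lipschitz (by~\eqref{eq:propoFb1} and Assumption~\ref{ass:f}), and the noise contribution reduces to a single stochastic convolution $R_\delta(t) = \int_0^t e^{(t-s)A}(e^{\delta A} - I)dW^Q(s)$. Using commutativity of $e^{\delta A}$ with $(-A)^\alpha$ and $e^{tA}$, together with $\|(e^{\delta A} - I)(-A)^{-\alpha}\|_{\mathcal{L}(L^2)} \le C\delta^\alpha$ for $\alpha \in [0,1]$, It\^o's isometry gives $\E|R_\delta(t)|_{L^2}^2 \le C\delta^{2\alpha}M_{\alpha, 2}(Q^{1/2}, T)^2$, finite by Proposition~\ref{propo:suffQ}. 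A standard Gronwall argument then yields the strong error bound $\delta^\alpha$.

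\textbf{Weak error setup.} For the averaged pair, I apply It\^o's formula to $t \mapsto \ub(T-t, \Xb_\delta(t))$, with $\ub$ as in Proposition~\ref{propo:ub}. Since $\partial_t \ub = \Lb \ub$ and the drifts of $\Xb_\delta$ and $\Xb$ coincide, taking expectation gives
\[
\E\varphi(\Xb_\delta(T)) - \E\varphi(\Xb(T)) = \int_0^T \E\bigl[(\Lb_\delta - \Lb)\ub(T-s, \Xb_\delta(s))\bigr]ds,
\]
whose integrand equals $\tfrac{1}{2}\sum_n q_n D_x^2\ub(T-s, \Xb_\delta(s)).((e^{\delta A} - I)f_n, (e^{\delta A} + I)f_n)$ by symmetry of $D_x^2 \ub$. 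For the slow pair, since $Y^\epsilon$ is independent of $W^Q$, I condition on a realization $\omega_Y$ of $Y^\epsilon$ and use $u^{\omega_Y}(s, x) = \E_{W^Q}[\varphi(X^\epsilon(T)) \mid X^\epsilon(s) = x]$: the tangent-process argument behind Proposition~\ref{propo:ub} only involves the uniformly bounded $x$-derivatives of $F(\cdot, y)$, so~\eqref{eq:ub_1}--\eqref{eq:ub_3} hold for $u^{\omega_Y}$ with constants uniform in $\omega_Y$. Applying It\^o to $u^{\omega_Y}(t, X_\delta^\epsilon(t))$ and then averaging over $\omega_Y$ produces an identity of exactly the same form.

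\textbf{Key estimate and main obstacle.} The core step is the bound $|(\Lb_\delta - \Lb)\ub(s,x)| \le C\delta^{2\alpha}s^{-(2\alpha + 2\gamma_0)}$ for a calibrated parameter $\gamma_0$. Applying~\eqref{eq:ub_2} with $\beta_1 = 2\alpha + \gamma_0$, $\beta_2 = \gamma_0$, combined with commutativity and $\|(e^{\delta A} - I)(-A)^{-2\alpha}\|_{\mathcal{L}(L^2)} \le C\delta^{2\alpha}$, gives $|(-A)^{-\beta_1}(e^{\delta A} - I)f_n|_{L^2} \le C\delta^{2\alpha}|(-A)^{-\gamma_0}f_n|_{L^2}$ and $|(-A)^{-\beta_2}(e^{\delta A} + I)f_n|_{L^2} \le 2|(-A)^{-\gamma_0}f_n|_{L^2}$. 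Summing against $q_n$ via Cauchy--Schwarz produces the factor ${\rm Tr}((-A)^{-2\gamma_0}Q)$, which by Schatten--H\"older with exponents $\varsigma = \varrho/(\varrho - 2)$ and $\varrho/2$ (using $\|Q\|_{\mathcal{L}_{\varrho/2}} < \infty$ from Assumption~\ref{ass:general} and $\lambda_n \sim c_d n^{2/d}$) is finite exactly when $\gamma_0 > 1/2 - \aly$; the constraints $\beta_1 + \beta_2 < 1$ from~\eqref{eq:ub_2} and time integrability of $s^{-(2\alpha + 2\gamma_0)}$ both reduce to $\gamma_0 < 1/2 - \alpha$, so the admissible window $\gamma_0 \in (1/2 - \aly, 1/2 - \alpha)$ is non-empty precisely because $\alpha < \aly$. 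The main obstacle is exactly this three-way balance: under Assumption~\ref{ass:general}, $Q$ is not trace class, so the direct ${\rm Tr}(Q)$ bound used in the regular-case proofs of Lemmas~\ref{lem:I3} and~\ref{lem:J3} fails, and only this calibration of $\gamma_0$ simultaneously delivers the $\delta^{2\alpha}$ decay and convergence of the series. Integration in time then gives the weak error bound $C(T)\delta^{2\alpha}$.
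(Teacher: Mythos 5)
Your proof is correct and follows essentially the same route as the paper: the strong error is handled identically (isolating the stochastic convolution against $e^{\delta A}-I$ and applying Gronwall), and the weak error is obtained by the same It\^o/Kolmogorov argument in which the discrepancy is concentrated in the trace term and controlled by a Schatten--H\"older inequality balancing the smoothing of $D_x^2\ub$, the factor $\delta^{2\alpha}$ from $e^{\delta A}-I$, and $\|Q\|_{\mathcal{L}_{\varrho/2}}$. Your calibration parameter is just the paper's $\kappa$ in disguise (take $\gamma_0=\tfrac12-\alpha-\kappa$, so your window $(\tfrac12-\aly,\tfrac12-\alpha)$ is exactly the condition $0<\kappa<\aly-\alpha$), and your conditioning on the path of $Y^\epsilon$ merely makes explicit the step the paper dismisses with ``the argument is the same for both estimates.''
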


\begin{proof}[Proof of Lemma~\ref{lem:general1}]
This is a straightforward application of Theorems~\ref{th:strong_regular} and~\ref{th:weak_regular} combined with Lemma~\ref{lem:regul}.
\end{proof}

\begin{proof}[Proof of Lemma~\ref{lem:general2}]
Consider first the estimates of the strong error. Since the nonlinear operators $F$ and $\Fb$ are globally Lispchitz continuous, it is sufficient to prove the following estimate:
\begin{align*}
\E\big|\int_{0}^{t}e^{(t-s)A}\bigl(e^{\delta A}-I\bigr)dW^Q(s)|_{L^2}^2 ds&=\int_{0}^{t}\|e^{sA}\bigl(e^{\delta A}-I\bigr)Q^{\frac12}\|_{\mathcal{R}(L^2,L^2)}^2ds\\
&\le \|(e^{\delta A}-I)(-A)^{-\alpha}\|_{\mathcal{L}(L^2,L^2)}^2\int_{0}^{t}\|e^{sA}(-A)^{\alpha}Q^{\frac12}\|_{\mathcal{R}(L^2,L^2)}^2ds\\
& \le C_\alpha\delta^{2\alpha}M_{\alpha,2}(Q,T)^2,
\end{align*}
and to the strong error estimates are straightforward consequences of the Gronwall Lemma.

It remains to prove the estimates of the weak error. Since the argument is the same for both estimates, we only deal with the second one. Note that
\[
\E[\varphi(\Xb_\delta(t))]-\E[\varphi(\Xb(t))]=\E[\ub(0,\Xb_\delta(t))]-\E[\ub(t,\Xb_\delta(0))],
\]
where $\ub$ is defined by the expression~\eqref{eq:ub}. Observe that, even if Assumption~\ref{ass:general} is satisfied instead of Assumption~\ref{ass:regular}, the regularity estimates on spatial derivatives of $\ub$ stated in Proposition~\ref{propo:ub} remain valid without modification.

Using It\^o formula, one obtains
\begin{align*}
\E[\varphi(\Xb_\delta(t))]&-\E[\varphi(\Xb(t))]\\
&=\E\int_{0}^{t}\sum_{n\in\N}q_n\Bigl(D^2\ub(t-s,\Xb_\delta(s)).(e^{\delta A}f_n,e^{\delta A}f_n)-D^2\ub(t-sn,\Xb_\delta(s)).(f_n,f_n)\Bigr)ds\\
&=\E\int_{0}^{t}\Bigl({\rm Tr}\bigl(D^2\ub(t-s,\Xb_\delta(s))e^{\delta A}Qe^{\delta A}\bigr)-{\rm Tr}\bigl(D^2\ub(t-s,\Xb_\delta(s))Q\bigr)\Bigr)ds\\
&=\E\int_{0}^{t}{\rm Tr}\bigl(D^2\ub(t-s,\Xb_\delta(s))\bigl(e^{\delta A}-I\bigr)Qe^{\delta A}\bigr)ds\\
&~+\E\int_{0}^{t}{\rm Tr}\bigl(D^2\ub(t-s,\Xb_\delta(s))Q\bigl(e^{\delta A}-I\bigr)\bigr)ds,
\end{align*}
where $D^2\ub(t,x)$ is interpreted as a bounded, self-adjoint, linear operator from $L^2$ to $L^2$, instead of a symmetric, bilinear form on $L^2$, using Riesz Theorem: for all $h\in L^2$, $D^2\ub(t,x).h\in L^2$ is caracterized by
\[
\langle D^2\ub(t,x)h,\cdot\rangle=D^2\ub(t,x).(h,\cdot).
\]
Let $\alpha\in(0,\aly)$ and $\kappa\in(0,\aly-\alpha)$. Then, using the H\"older type inequality for Schatten norms, for all $0\le s< t\le T$,
\begin{align*}
\big|{\rm Tr}\bigl(D^2\ub(t-s,\Xb_\delta(s))&\bigl(e^{\delta A}-I\bigr)Qe^{\delta A}\bigr)\big|=\|D^2\ub(t-s,\Xb_\delta(s))\bigl(e^{\delta A}-I\bigr)Qe^{\delta A}\|_{\mathcal{L}_1(L^2)}\\
&\le \|D^2\ub(t-s,\Xb_\delta(s))(-A)^{1-2\kappa}\|_{\mathcal{L}_\infty(L^2)}\|(-A)^{-1+2\kappa}(I-e^{\delta A})\|_{\mathcal{L}_{\varsigma}(L^2)}\|Q\|_{\mathcal{L}_{\frac{\varrho}{2}}(L^2)},
\end{align*}
where $1=\frac{2}{\varrho}+\frac{1}{\varsigma}$. By assumption, one has $\|Q\|_{\mathcal{L}_{\frac{\varrho}{2}}}<\infty$. In addition, thanks to Proposition~\ref{propo:ub}, one has
\[
\|D^2\ub(t-s,\Xb_\delta(s))(-A)^{1-2\kappa}\|_{\mathcal{L}_\infty(L^2)}=\|D^2\ub(t-s,\Xb_\delta(s))(-A)^{1-2\kappa}\|_{\mathcal{L}(L^2,L^2)}\le C_\kappa(t-s)^{-1+2\kappa}.
\]
Finally, $(-A)^{-1+2\kappa}(I-e^{\delta A})$ is a self-adjoint, compact, linear operator, thus, for $\alpha\le \frac12$, one has
\begin{align*}
\|(-A)^{-1+2\kappa}(I-e^{\delta A})\|_{\mathcal{L}_\varsigma(L^2)}^\varsigma&=\sum_{n\in\N}\lambda_n^{-(1-2\kappa)\varsigma}(1-e^{-\delta\lambda_n})^{\varsigma}\\
&\le C_\alpha \delta^{2\alpha\varsigma}\sum_{n\in\N}\lambda_n^{-(1-2\kappa-2\alpha)\varsigma}.
\end{align*}
Finally, with the condition $\alpha+\kappa<\aly=\frac{1}{2}(1-\frac{d}{2\varsigma})$, one has $(1-2\kappa-2\alpha)\varsigma>\frac{d}{2}$, thus $\sum_{n\in\N}\lambda_n^{-(1-2\kappa-2\alpha)\varsigma}<\infty$.

Finally, one obtains
\[
\big|{\rm Tr}\bigl(D^2\ub(t-s,\Xb_\delta(s))\bigl(e^{\delta A}-I\bigr)Qe^{\delta A}\bigr)\big|\le C_\alpha \delta^{2\alpha}(t-s)^{-1+2\kappa},
\]
and similarly
\[
\big|{\rm Tr}\bigl(D^2\ub(t-s,\Xb_\delta(s))Q\bigl(e^{\delta A}-I\bigr)\bigr)\big|\le C_\alpha \delta^{2\alpha}(t-s)^{-1+2\kappa}.
\]

It is then straightforward to conclude that
\[
\big|\E[\varphi(\Xb_\delta(t))]-\E[\varphi(\Xb(t))]\big|\le C_\alpha \delta^{2\alpha}.
\]
This concludes the proof of Lemma~\ref{lem:general2}.
\end{proof}

We are now in position to provide the proof of Theorem~\ref{th:general}, which consists in choosing $\delta$ in terms of $\epsilon$ to maximize the order of convergence.
\begin{proof}[Proof of Theorem~\ref{th:general}]
Thanks to the strong and weak error estimates from Lemmas~\ref{lem:general1} and~\ref{lem:general2}, one obtains, for all $\epsilon\in(0,1)$ and $\delta>0$,
\begin{align*}
\underset{t\in[0,T]}\sup~\bigl(\E|X^\epsilon(t)-\Xb(t)|_{L^2}^2\bigr)^{\frac12}&\le C_{\alpha,\gamma}(T,x_0,y_0,Q)\Bigl(\delta^{-\frac{1-\alpha-\gamma}{2}}\epsilon^{\frac12}+\delta^{\alpha}\Bigr),\\
\underset{t\in[0,T]}\sup~|\E[\varphi(X^\epsilon(t))]-\E[\varphi(\Xb(t))]|&\le C_{\alpha,\gamma}(T,x_0,y_0,Q,\varphi)\Bigl(\delta^{-(1-\alpha-\gamma)}\epsilon+\delta^{2\alpha}\Bigr).
\end{align*}
Choosing $\delta=\epsilon^{\frac{1}{1+\alpha-\gamma}}$, with $\aly-\alpha$ and $\gamy-\gamma$ arbitrarily small then concludes the proof.
\end{proof}

\begin{rem}
Let us replace Assumption~\ref{ass:general} by the following condition: $\aly\in[0,1)$ is such that for all $\alpha\in[0,\aly$ and all $p\ge 2$, one has $\|(-A)^{\alpha-\frac12}Q^{\frac12}\|_{\mathcal{R}(L^2,L^p)}<\infty$. Then the results of this section can be generalized as follows, using similar techniques. Lemma~\ref{lem:general1} holds true, whereas Lemma~\ref{lem:general2} needs to be modified: the strong error remains bounded by $C_\alpha\delta^\alpha$, and the weak error is bounded by $C_\alpha \delta^{\min(1,2\alpha)}$. On the one hand, if $\aly\ge\frac12$, the situation is the same as in Theorem~\ref{th:general}. On the other hand, if $\aly\ge\frac12$, the strong and the weak rates one obtains using the approximation approach considered above, are $\frac{\aly}{1+\aly-\gamy}$ and $\frac{1}{2-\aly-\gamy}$ respectively. This statement and the approach are not satisfactory in this case since the weak order is not equal to twice the strong order anymore. Whether this issue can be fixed, and whether the rates of convergence given above are optimal, is left for future works.
\end{rem}

\section{Efficient numerical approximation of the slow component}\label{sec:hmm}

The goal of this section is to describe a temporal discretization scheme for the slow component~$X^\epsilon$ in~\eqref{eq:SPDE}, which is stable and efficient when $\epsilon\to 0$. Indeed, given a time-step size $h>0$, stability for the discretization of the evolutions of $X^\epsilon$ and $Y^\epsilon$ requires to choose $h$ such that $h={\rm O}(\epsilon)$. The scheme proposed below is based on Heterogeneous Multiscale Methods, see~\cite{B:2013} and references therein. Instead of using a single time-step size $h>0$, two time-step sizes $\Delta t>0$ and $\delta t>0$ are introduced. The slow component $X^\epsilon$ is discretized using a macro-scheme, with time-step size $\Delta t$: the scheme is constructed such that $\Delta t$ does not depend on the small parameter $\epsilon$. The fast component $Y^\epsilon$ is discretized using a micro-scheme, with time-step size $\delta t$. Since in~\eqref{eq:SPDE}, the fast component $Y^\epsilon(t)=Y(\epsilon^{-1}t)$ is not coupled with the slow component $X^\epsilon$, in this manuscript we can rely on a discretization of the process $Y$, with a time-step size $\tau>0$.

The detailed construction of the scheme is presented and discussed in Section~\ref{sec:hmm1}. Convergence results are stated in Section~\ref{sec:hmm2}. The proofs are omitted, since they would be similar to those in~\cite{B:2013}, where the slow component was not driven by a stochastic forcing.

\subsection{Construction of the scheme}\label{sec:hmm1}

As explained above, the main parameters of the multiscale scheme are the macro-time step size $\Delta t>0$ and the micro-time step size $\tau>0$. Two other integer parameters $M$ and $M_a\in\left\{1,\ldots,M\right\}$ are used, to insert data from the micro-scheme into the macro-scheme, in terms of temporal averages.

In this section, to avoid cumbersome notation, precise regularity conditions, and dependence in error estimates, on the initial conditions $x_0,y_0$ are not indicated.

\subsubsection{Micro-scheme}

Let $\bigl(Y_m^\tau\bigr)_{m\in\N_0}$ be computed using a numerical integrator $\Phi^\tau$ for the stochastic process $\bigl(Y(t)\bigr)_{t\ge 0}$. It is assumed that this discrete-time process defines an ergodic Markov chain on $L^2$, with unique invariant probability distribution denoted by $\mu^\tau$.

It is natural, see for instance~\cite{B:2013}, to assume that the error between $\mu$ and $\mu^\tau$ is of the order $\tau^{2\gamma}$, for all $\gamma\in[0,\gamy)$, in the following sense: for all functions $\varphi:L^2\to \mathbb{R}$ of class $\mathcal{C}^2$, with bounded first and second order derivatives, and all $\gamma\in[0,\gamy)$, there exists $C_\gamma(\varphi)\in(0,\infty)$ such that
\begin{equation}\label{eq:errtau}
|\int \varphi d\mu^\tau-\int \varphi d\mu|\le C_\gamma(\varphi)\tau^{2\gamma}.
\end{equation}
Moreover, define an averaged coefficient $\Fb^\tau$ with respect to the probability distribution $\mu^\tau$:
\begin{equation}\label{eq:Fbtau}
\Fb^\tau(x)=\int F(x,y) d\mu^\tau(y),~\forall x\in L^2.
\end{equation}
The approximation result above is extended as follows: assume that, for all $\gamma\in[0,\gamy)$, there exists $C_\gamma\in(0,\infty)$, such that
\[
\underset{x\in L^2}\sup~|\Fb(x)-\Fb^\tau(x)|_{L^2}\le C_\gamma \tau^{2\gamma}.
\]
The conditions above are satisfied if $Y$ is the solution of a parabolic semilinear SPDE, driven by additive noise, and $Y^\tau$ is obtained applying the linear-implicit Euler scheme, under suitable conditions on the nonlinearity in the equation, see~\cite{B:2014}.

To state convergence results, notation concerning the speed of convergence to equilibrium is introduced. Let $\rho:(0,\infty)\to (0,\infty)$ be non-increasing, $\rho(t)\underset{t\to\infty}\to 0$, and assume that
\begin{equation}\label{eq:speed}
|\E[\varphi(Y_m^\tau)]-\int \varphi d\mu^\tau|\le C(\varphi)\rho(m\tau),
\end{equation}
and that
\[
\underset{x\in L^2}\sup~\big|\E[F(x,Y_m^\tau)]-\Fb^\tau(x)\big|_{L^2}\le C\rho(m\tau).
\]

Finally, define the following quantities
\[
R_1(M,M_a,\tau)=\frac{1}{M_a}\sum_{m=M-M_a+1}^{M}\rho(m\tau)~,~R_2(M,M_a,\tau)=\frac{1}{M_a^2}\sum_{M-M_a+1\le m_1<m_2\le M}\rho((m_2-m_1)\tau).
\]
Using a Cesaro type argument, for fixed $\tau>0$, if $M_a\to\infty$, then both $R_1(M,M_a,\tau)\to 0$ and $R_2(M,M_a,\tau)\to 0$. If $M-M_a\to \infty$, then also $R_1(M,M_a,\tau)\to 0$.

More precisely, if $\rho(t)=e^{-ct}$ for some $c>0$, note that there exists $C\in(0,\infty)$ such that for all $M_a\le M$ and all $\tau>0$,
\[
R_1(M,M_a,\tau)\le \frac{Ce^{-c(M-M_a+1)\tau}}{M_a\tau+1}~,~  R_2(M,M_a,\tau)\le \frac{C}{M_a\tau+1}.
\]

\subsubsection{Macro-scheme}

We are now in position to define the macro-scheme. The principle is to approximate $\Xb(t)$ instead of $X^\epsilon(t)$, thanks to the averaging principle, and using error estimates given by Theorems~\ref{th:strong_regular},~\ref{th:weak_regular} and~\ref{th:general}. It is thus sufficient to compute an approximation of the averaged coefficient $\Fb$, using the ergodicity of the micro-scheme and the error estimate~\eqref{eq:errtau}, and to apply a standard integrator with time-step size $\Delta t>0$.

Set $Y_{n,m}^\tau=Y_{nM_t+m}^\tau$ for all $n\in\mathbb{N}$, and $m\in\left\{0,\ldots,M\right\}$. The macro-scheme is based on the linear implicit Euler scheme: define
\begin{equation}\label{eq:macro}
X_{n+1}=S_{\Delta t}(X_n+\Delta t\tilde{F}_n+\Delta W_n^Q\bigr),
\end{equation}
where $X_0^\epsilon=x_0$, $S_{\Delta t}=(I-\Delta tA)^{-1}$, $\Delta W_n^Q=W^Q\bigl((n+1)\Delta\bigr)-W^Q\bigl(n\Delta t\bigr)$ are Wiener increments, and with the following approximation of the nonlinearity,
\begin{equation}\label{eq:average_micromacro}
\tilde{F}_n=\frac{1}{M_a}\sum_{m=M-M_a+1}^{M}F(X_n,Y_{n,m}^\tau),
\end{equation}
computed as a temporal average, depending on the parameters $M_a$ and $M$.

\subsection{Convergence of the multiscale scheme \eqref{eq:macro}-\eqref{eq:average_micromacro}}\label{sec:hmm2}

\subsubsection{Auxiliary schemes}

In order to analyze the multi-scale scheme given by~\eqref{eq:macro}-\eqref{eq:average_micromacro}, and to give a clear discussion, several schemes are introduced.

First, applying the same integrator as in~\eqref{eq:macro}, {\it i.e.} the linear implicit Euler scheme, to discretize the averaged SPDE~\eqref{eq:av}, introduce the following scheme,
\[
\Xb_{n+1}=S_{\Delta t}\bigl(\Xb_n+\Delta t\Fb(\Xb_n)+\Delta W_n^Q\bigr),~\quad \Xb_0=x_0.
\]

Second, due to the error in sampling the invariant distribution $\mu$ using the micro-scheme with time-step size $\tau>0$, define a modified averaged equation
\[
d\Xb^\tau(t)=A\Xb^\tau(t)dt+\Fb^\tau(\Xb^\tau(t))dt+dW^Q(t),~\Xb^\tau(0)=x_0,
\]
and the associated numerical discretization
\begin{equation}\label{eq:scheme_tau}
\Xb_{n+1}^\tau=S_{\Delta t}\bigl(\Xb_n^\tau+\Delta t\Fb^\tau(\Xb_n^\tau)+\Delta W_n^Q\bigr),~\quad \Xb_0^\tau=x_0.
\end{equation}

Based on the literature concerning the numerical analysis of SPDEs, rates of convergence for these numerical schemes are assumed to be as follows: for all $T\in(0,\infty)$, all $\alpha\in[0,\aly)$, and all test functions $\varphi$ of class $\mathcal{C}_b^2$,
\begin{equation}\label{eq:errorDeltat}
\begin{aligned}
\underset{0\le t\le T}\sup~\bigl(\E|\Xb(n\Delta t)-\Xb_n|^2\bigr)^{\frac12}&\le C_\alpha(T)\Delta t^{\min(\alpha,\frac12)},\\
\underset{0\le t\le T}\sup~\big|\E[\varphi(\Xb(n\Delta t))]-\E[\varphi(\Xb_n)]|\big|&\le C_\alpha(T,\varphi)\Delta t^{\min(2\alpha,1)}.
\end{aligned}
\end{equation}

\subsubsection{Error estimates}

Proposition~\ref{propo:hmm} below states a general convergence result, depending on the parameters $\Delta t$, $\tau$, $M$ and $M_a$.

Let $\beta_{\rm max}=\frac12$ when Assumption~\ref{ass:regular} is satisfied, and recall that $\beta_{\rm max}=\frac{\aly}{1+\aly-\gamy}$ if Assumption~\ref{ass:general} is satisfied.
\begin{propo}\label{propo:hmm}
For all $T\in(0,\infty)$, all $\alpha\in[0,\aly)$, $\gamma\in[0,\gamy)$, and $\beta\in[0,\beta_{\rm max})$, there exists $C_{\alpha,\gamma,\beta}(T)\in(0,\infty)$ such that the strong error is of size
\begin{equation}\label{eq:propo_hmm_1}
\begin{aligned}
\underset{0\le n\Delta t\le T}\sup~\bigl(\E|X_n-X^\epsilon(n\Delta t)|_{L^2}^{2}\bigr)^{\frac12}&\le C_{\alpha,\gamma,\beta}(T)\bigl(\epsilon^\beta+\Delta t^{\min(\alpha,\frac12)}+\tau^{2\gamma}\bigr)\\
&+C_{\alpha,\gamma,\beta}(T)\Bigl( \sqrt{R_1(M,M_a,\tau)}+\sqrt{\Delta t}\bigl(\frac{1}{\sqrt{M_a}}+\sqrt{R_2(M,M_a,\tau)}\bigr)\Bigr),
\end{aligned}
\end{equation}
and, for all test functions $\varphi$ of class $\mathcal{C}_b^2$, there exists $C_{\alpha,\gamma,\beta}(T,\varphi)\in(0,\infty)$ such that the weak error is of size
\begin{equation}\label{eq:propo_hmm_2}
\begin{aligned}
\underset{0\le n\Delta t\le T}\sup~\big|\E[\varphi(X_n)]-\E[\varphi(X^\epsilon(n\Delta t))]\big|&\le C_{\alpha,\gamma,\beta}(T,\varphi)\bigl(\epsilon^{2\beta}+\Delta t^{\min(2\alpha,1)}+\tau^{2\gamma}\bigr)\\
&~+C_{\alpha,\gamma,\beta}(T,\varphi)\Bigl(R_1(M,M_a,\tau)+\Delta t\bigl(\frac{1}{M_a}+R_2(M,M_a,\tau)\bigr)\Bigr).
\end{aligned}
\end{equation}
In addition, for all test functions $\psi$ of class $\mathcal{C}_b^2$, there exists $C_{\gamma}(\psi)\in(0,\infty)$, such that,
\begin{equation}\label{eq:propo_hmm:3}
\underset{n\in\N}\sup~|\E[\psi(Y_{nM})]-\int \psi d\mu|\le C_\tau(\psi)\Bigl(\tau^{2\gamma}+\rho(nM\tau)\Bigr).
\end{equation}
\end{propo}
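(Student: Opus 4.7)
The plan is to decompose the total error through a chain of intermediate processes, so that each single step can be controlled by a result already at our disposal. Specifically, we interpolate between $X^\epsilon(n\Delta t)$ and $X_n$ via the averaged solution $\Xb(n\Delta t)$, its linear-implicit Euler discretization $\Xb_n$, and the analogous discretization $\Xb_n^\tau$ associated with the modified averaged SPDE with drift $\Fb^\tau$ (see~\eqref{eq:scheme_tau}). Then the three estimates~\eqref{eq:propo_hmm_1} and~\eqref{eq:propo_hmm_2} are obtained by triangle inequality, each summand being matched with an independent tool.

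For the first three links of the chain, the ingredients are already available. The averaging error $X^\epsilon(n\Delta t)-\Xb(n\Delta t)$ is controlled by Theorems~\ref{th:strong_regular} and~\ref{th:weak_regular} in the regular case, and by Theorem~\ref{th:general} in the less regular case, producing the $\epsilon^\beta$ and $\epsilon^{2\beta}$ contributions. The error $\Xb(n\Delta t)-\Xb_n$ is of size $\Delta t^{\min(\alpha,1/2)}$ (strong) and $\Delta t^{\min(2\alpha,1)}$ (weak) by the assumed rates~\eqref{eq:errorDeltat}. The error $\Xb_n-\Xb_n^\tau$ is handled by a direct discrete Gronwall argument: since the two schemes share the same Wiener increments, the difference satisfies a recursion driven only by $\Delta t\,(\Fb(\Xb_n)-\Fb^\tau(\Xb_n^\tau))$, and the uniform bound $|\Fb(x)-\Fb^\tau(x)|_{L^2}\le C_\gamma\tau^{2\gamma}$ combined with Lipschitz continuity of $\Fb^\tau$ yields an $\mathrm{O}(\tau^{2\gamma})$ contribution to both strong and weak errors.

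The main obstacle is the remaining step $\Xb_n^\tau-X_n$, which is specifically the HMM error. Setting $r_n=\Xb_n^\tau-X_n$, one has $r_{n+1}=S_{\Delta t}(r_n+\Delta t(\Fb^\tau(\Xb_n^\tau)-\tilde{F}_n))$, so the increment to analyze is $\Delta t(\Fb^\tau(X_n)-\tilde{F}_n)$ up to a Lipschitz remainder. One decomposes this into a bias part, namely $\frac{1}{M_a}\sum_{m=M-M_a+1}^M(\E[F(X_n,Y_{n,m}^\tau)\mid X_n]-\Fb^\tau(X_n))$, controlled in $L^2$ by $R_1(M,M_a,\tau)$ thanks to~\eqref{eq:speed}, and a fluctuation part whose conditional variance is estimated by splitting into diagonal terms (of size $1/M_a$) and off-diagonal correlation terms, which are bounded via conditioning on $Y_{n,m_1}^\tau$ and reapplying~\eqref{eq:speed} to produce $\rho((m_2-m_1)\tau)$; summation yields the $R_2(M,M_a,\tau)$ contribution. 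For the strong estimate, the bias accumulates linearly in $n\Delta t\le T$, giving $R_1$, while the fluctuations, behaving as an approximate martingale, enjoy quadratic cancellation and contribute $\sqrt{\Delta t(1/M_a+R_2)}$. For the weak estimate, a second-order Taylor expansion of the smooth test function along the scheme trajectory—combined with the regularity of $\ub$ from Proposition~\ref{propo:ub}, applied at discrete times analogously to the continuous-time analysis of Section~\ref{sec:proof_weak_regular}—converts the variance contribution to $\Delta t(1/M_a+R_2)$, while the bias remains of size $R_1$. Throughout, one needs uniform-in-parameters moment bounds on $X_n$ and on the micro-process $Y_m^\tau$, which follow from the stability of the linear-implicit Euler scheme and the ergodicity assumptions on $\Phi^\tau$. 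The details parallel the analysis carried out in~\cite{B:2013} in the deterministic-slow case, with the additional ingredient that the stochastic convolution $\Delta W_n^Q$ is now common to $\Xb_n^\tau$ and $X_n$ so it cancels exactly in the difference, making the present argument a genuine extension rather than a qualitative change.

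Finally, the bound~\eqref{eq:propo_hmm:3} is a direct triangle inequality: $|\E[\psi(Y_{nM})]-\int\psi\,d\mu|\le |\E[\psi(Y_{nM})]-\int\psi\,d\mu^\tau|+|\int\psi\,d\mu^\tau-\int\psi\,d\mu|$, the first summand being $\mathrm{O}(\rho(nM\tau))$ by~\eqref{eq:speed} and the second $\mathrm{O}(\tau^{2\gamma})$ by~\eqref{eq:errtau}, which completes the proof.
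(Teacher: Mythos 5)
Your proposal is correct and follows essentially the same route as the paper: the same chain of intermediate processes $X^\epsilon(n\Delta t)\to\Xb(n\Delta t)\to\Xb_n\to\Xb_n^\tau\to X_n$, with the first three links handled by Theorems~\ref{th:strong_regular}--\ref{th:general}, the assumed rates~\eqref{eq:errorDeltat}, and a Gronwall argument based on~\eqref{eq:errtau}, and the last link (the content of Proposition~\ref{propo:hmm_bis}) treated by the same bias/fluctuation splitting with conditioning arguments yielding $R_1$ and $R_2$, a discrete Gronwall lemma for the strong error, and a telescoping sum with the discrete Kolmogorov-type function $\ub^\tau$ for the weak error. The bound~\eqref{eq:propo_hmm:3} is obtained by the identical triangle inequality.
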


In fact, Proposition~\ref{propo:hmm} is a straightforward corollary of Proposition~\ref{propo:hmm_bis} below, combined with results stated above:
\begin{itemize}
\item Theorems~\ref{th:strong_regular} and~\ref{th:weak_regular}, or Theorem~\ref{th:general}, to deal with the error in the averaging principle, which are the main results of this article,
\item strong and weak error estimates~\eqref{eq:errorDeltat} for the macro-scheme applied to the averaged SPDE~\eqref{eq:av},
\item the sampling error~\eqref{eq:errtau} between the invariant distributions $\mu$ and $\mu^{\tau}$, which gives an error estimate $\underset{0\le n\Delta t\le T}\sup~\bigl(\E|\Xb_n-\Xb_n^\tau|^2\bigr)^{\frac12}\le C_\gamma \tau^{2\gamma}$ by a straightforward Gronwall type argument.
\end{itemize}
Note that~\eqref{eq:propo_hmm:3} is a straightforward consequence of~\eqref{eq:errtau} and~\eqref{eq:speed}.

\begin{propo}\label{propo:hmm_bis}
For all $T\in(0,\infty)$, there exists $C(T)\in(0,\infty)$ such that, for all $\Delta t\in(0,1)$, $\tau\in(0,1)$, and $1\le M_a\le M$, one has
\begin{equation}\label{eq:propo_hmm_1_bis}
\underset{0\le n\Delta t\le T}\sup~\bigl(\E|X_n-X_n^\tau|_{L^2}^{2}\bigr)^{\frac12}\le C(T)\Bigl( \sqrt{R_1(M,M_a,\tau)}+\sqrt{\Delta t}\bigl(\frac{1}{\sqrt{M_a}}+\sqrt{R_2(M,M_a,\tau)}\bigr)\Bigr),
\end{equation}
and, for all test functions $\varphi$ of class $\mathcal{C}_b^2$, there exists $C(T,\varphi)\in(0,\infty)$ such that, for all $\Delta t\in(0,1)$, $\tau\in(0,1)$, and $1\le M_a\le M$, one has
\begin{equation}\label{eq:propo_hmm_2_bis}
\underset{0\le n\Delta t\le T}\sup~\big|\E[\varphi(X_n)]-\E[\varphi(\Xb_n^\tau)]\big|\le C(T,\varphi)\Bigl(R_1(M,M_a,\tau)+\Delta t\bigl(\frac{1}{M_a}+R_2(M,M_a,\tau)\bigr)\Bigr).
\end{equation}
\end{propo}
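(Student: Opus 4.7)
The error $e_n := X_n - \Xb_n^\tau$ satisfies the one-step recursion
\begin{equation*}
e_{n+1} = S_{\Delta t}\bigl(e_n + \Delta t[\Fb^\tau(X_n) - \Fb^\tau(\Xb_n^\tau)] + \Delta t\,\eta_n\bigr), \qquad \eta_n := \tilde F_n - \Fb^\tau(X_n),
\end{equation*}
since the Wiener increments cancel between the two schemes: all the analysis reduces to controlling the Monte--Carlo sampling error $\eta_n$. Since $(Y_m^\tau)_m$ is independent of $W^Q$ and Markov, $Y_{n,m}^\tau = Y_{nM+m}^\tau$ has conditional law given $Y_{n,0}^\tau$ determined by the chain alone; setting $\mathcal G_n := \sigma(X_n, Y_{n,0}^\tau)$, which lies in the natural filtration $\mathcal F_{n-1}$ of past Wiener increments and past micro-chain, one obtains
\begin{equation*}
\mu_n := \E[\eta_n\mid\mathcal G_n] = \frac{1}{M_a}\sum_{m=M-M_a+1}^{M} g_m(X_n, Y_{n,0}^\tau), \quad g_m(x,y) := \E[F(x,Y_m^\tau)\mid Y_0^\tau=y] - \Fb^\tau(x),
\end{equation*}
with $|g_m(x,y)|_{L^2} \le C(1+|y|_{L^2})\rho(m\tau)$ by a direct variant of~\eqref{eq:speed}, hence $\E|\mu_n|_{L^2}^2 \le C\,R_1(M,M_a,\tau)^2$. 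Expanding $\bar\eta_n := \eta_n - \mu_n$ as a double sum over pairs $(m_1,m_2)$ and conditioning on $Y_{n,m_1}^\tau$ for $m_1 \le m_2$, off-diagonal contributions are controlled via $\rho((m_2-m_1)\tau)$ while diagonal contributions by $1/M_a$, giving $\E|\bar\eta_n|_{L^2}^2 \le C(1/M_a + R_2(M,M_a,\tau))$. Moreover $(\bar\eta_n)_n$ is a sequence of martingale increments for $(\mathcal F_n)_n$ since $\mathcal G_n \subset \mathcal F_{n-1}$.

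For the strong estimate~\eqref{eq:propo_hmm_1_bis}, iterating the recursion and using $\|S_{\Delta t}\|_{\mathcal{L}(L^2,L^2)} \le 1$ together with Lipschitz continuity of $\Fb^\tau$ (uniform in $\tau$), one obtains
\begin{equation*}
\E|e_n|_{L^2}^2 \le C\Delta t\sum_{k<n}\E|e_k|_{L^2}^2 + 2\,\E\Big|\Delta t\sum_{k<n} S_{\Delta t}^{n-k}\mu_k\Big|_{L^2}^2 + 2\,\E\Big|\Delta t\sum_{k<n} S_{\Delta t}^{n-k}\bar\eta_k\Big|_{L^2}^2.
\end{equation*}
The bias term is bounded pointwise by $T\sup_k|\mu_k|_{L^2}$, contributing $\le CT^2 R_1^2$; the martingale term, by orthogonality of increments, is $\le \Delta t\sum_{k<n}(\Delta t)\E|\bar\eta_k|_{L^2}^2 \le CT\Delta t(1/M_a + R_2)$. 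Discrete Gronwall yields the strong bound, after absorbing $R_1$ into $\sqrt{R_1}$ using the boundedness of $\rho$ (so $R_1 \le C$).

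For the weak estimate~\eqref{eq:propo_hmm_2_bis}, define the discrete Kolmogorov function $u_n^\tau(x) := \E[\varphi(\Xb_N^\tau)\mid\Xb_n^\tau = x]$ for $0 \le n \le N$ with $N\Delta t = T$. Uniform bounds on $Du_n^\tau$ and $D^2 u_n^\tau$ follow from tangent-process arguments analogous to Proposition~\ref{propo:ub} applied to~\eqref{eq:scheme_tau}, exploiting the additivity of the noise and the Lipschitz constant of $\Fb^\tau$ being uniform in $\tau$. Telescoping,
\begin{equation*}
\E[\varphi(X_N)] - \E[\varphi(\Xb_N^\tau)] = \sum_{n=0}^{N-1} \E\bigl[u_{n+1}^\tau(X_{n+1}) - u_{n+1}^\tau(\Phi_n^\tau(X_n))\bigr],
\end{equation*}
with $\Phi_n^\tau(x) := S_{\Delta t}(x + \Delta t\Fb^\tau(x) + \Delta W_n^Q)$ and increment $X_{n+1} - \Phi_n^\tau(X_n) = \Delta t\,S_{\Delta t}\eta_n$. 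Taylor expansion plus the conditional centering $\E[\bar\eta_n\mid\mathcal G_n] = 0$, together with the independence of $\Delta W_n^Q$ from $\eta_n$, reduce the first-order term to $\Delta t\,\E\langle Du_{n+1}^\tau(\Phi_n^\tau(X_n)), S_{\Delta t}\mu_n\rangle = O(\Delta t\, R_1)$; the second-order term contributes $O((\Delta t)^2 (R_1^2 + 1/M_a + R_2))$ per step. Summing the $N = T/\Delta t$ terms delivers the claimed weak bound. The principal technical difficulty is the uniform-in-$(\tau, \Delta t)$ regularity of $u_n^\tau$, obtained by adapting the tangent-process proof of Proposition~\ref{propo:ub} to the linear implicit Euler scheme; all remaining estimates are routine applications of Cauchy--Schwarz and discrete Gronwall.
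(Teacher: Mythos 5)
Your proposal is correct and follows essentially the same route as the paper's sketch: a mild/recursive formulation of the error with Gronwall for the Lipschitz part, conditioning on the start of each micro-block to extract the mixing rate $\rho$ (yielding the $R_1$ bias term and the $\frac{1}{M_a}+R_2$ variance terms), and, for the weak error, a telescoping sum with the discrete Kolmogorov function, a second-order Taylor expansion, and a conditioning argument for the first-order term (your reduction to $\mu_n$ is exactly the paper's auxiliary function $\Psi$). Your only reorganization is to split the sampling error into a conditional mean plus martingale increments and invoke orthogonality, where the paper expands the square of the sum and treats diagonal and off-diagonal terms directly; the two are equivalent, and like the paper you defer the uniform-in-$(\tau,\Delta t)$ regularity of the discrete Kolmogorov function to a tangent-process argument.
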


Observe that Proposition~\ref{propo:hmm_bis} implies the convergence of the macro-scheme~\eqref{eq:macro} to the scheme~\eqref{eq:scheme_tau}, when $M_a\to \infty$, for any fixed values of $\Delta t$ and $\tau$. Note that to respect time-scales in~\eqref{eq:SPDE}, it is appropriate to choose parameters such that $M\tau=\epsilon^{-1}\Delta t$, and also $M_a\tau=\tilde{M}_a\epsilon^{-1}\Delta t$, thus the convergence property stated above may be interpreted as arising from taking the limit $\epsilon\to 0$. The limit scheme~\eqref{eq:scheme_tau} is not an integrator for the averaged equation~\eqref{eq:av}, but to a modified equation, with a residual depending on the micro time-step size $\tau$.

A full analysis of the cost of the multiscale scheme~\eqref{eq:macro}-\eqref{eq:average_micromacro}, depending on parameters $\Delta t$, $\tau$, $M$ and $M_a$, requires to balance the error terms in~\eqref{eq:propo_hmm_1} and~\eqref{eq:propo_hmm_2}. We refer to~\cite{B:2013}.

To conclude this section, a skecth of proof of Proposition~\ref{propo:hmm_bis} is provided, see~\cite{B:2013} for more details.

\begin{proof}[Sketch of proof of Proposition~\ref{propo:hmm_bis}]
To deal with the strong error estimate~\eqref{eq:propo_hmm_1_bis}, note that
\begin{align*}
X_n-\Xb_n^\tau&=\Delta t\sum_{k=0}^{n-1}S_{\Delta t}^{n-k}\bigl(\tilde{F}_k-\Fb^\tau(\Xb_k^\tau)\\
&=\Delta t\sum_{k=0}^{n-1}S_{\Delta t}^{n-k}\bigl(\Fb^\tau(X_k)-\Fb^\tau(\Xb_k^\tau)+\Delta t\sum_{k=0}^{n-1}S_{\Delta t}^{n-k}\bigl(\tilde{F}_k-\Fb^\tau(X_k^\tau).
\end{align*}
On the one hand, thanks to the Lipschitz continuity of $\Fb^\tau$, one has
\[
\bigl(\E\big|\Delta t\sum_{k=0}^{n-1}S_{\Delta t}^{n-k}\bigl(\Fb^\tau(X_k)-\Fb^\tau(\Xb_k^\tau)\big|_{L^2}^2\bigr)^ {\frac12}\le C\Delta t\sum_{k=0}^{n-1}\bigl(\E|X_k-\Xb_k^\tau|_{L^2}^2\bigr)^{\frac12}.
\]
On the other hand, a straightforward expansion yields
\begin{align*}
\E\big|\Delta t\sum_{k=0}^{n-1}S_{\Delta t}^{n-k}&\bigl(\tilde{F}_k-\Fb^\tau(X_k^\tau)|\big|^2\le\Delta t^2\sum_{k=0}^{n-1}\E|\tilde{F}_k-\Fb^\tau(X_k^\tau)|^2\\
&~+2\Delta t^2\sum_{0\le k_1<k_2\le n-1}\E\langle S_{\Delta t}^{n-k_1}\bigl(\tilde{F}_{k_1}-\Fb^\tau(X_{k_1}^\tau),S_{\Delta t}^{n-k_2}\bigl(\tilde{F}_{k_2}-\Fb^\tau(X_{k_2}^\tau)\rangle\\
&=\mathcal{E}_1+\mathcal{E}_2.
\end{align*}

An expansion of the average which defines $\tilde{F}_k$ yields
\begin{align*}
\mathcal{E}_1&=\frac{\Delta t^2}{M_a^2}\sum_{k=0}^{n-1}\sum_{m=M-M_a+1}^{M}\E|F(X_k,Y_{k,m}^\tau)-\Fb^\tau(X_k)|^2\\
&~+\frac{2\Delta t^2}{M_a^2}\sum_{k=0}^{n-1}\sum_{M-M_a+1\le m_1<m_2\le M}\E\langle F(X_k,Y_{k,m_1}^\tau)-\Fb^\tau(X_k),F(X_k,Y_{k,m_2}^\tau)-\Fb^\tau(X_k)\rangle\\
&\le C\frac{\Delta t}{M_a}+C\frac{\Delta t}{M_a^2}\sum_{M-M_a+1\le m_1<m_2\le M}\rho((m_2-m_1)\tau),
\end{align*}
thanks to a conditioning argument. In addition, using another conditioning argument, one gets
\[
|\mathcal{E}_2|\le C\Delta t^2\sum_{0\le k_1<k_2\le n-1}\frac{1}{M_a}\sum_{m=M-M_a+1}^{M}\rho(m\tau)\le CR_1(M,M_a,\tau).
\]
It remains to apply a discrete Gronwall Lemma to conclude the proof of the strong error estimate.

The treatment of the weak error estimate~\eqref{eq:propo_hmm_2_bis} requires to introduce the auxiliary function $\ub^{\tau}$ as follows: for all $n\in\N_0$ and $x\in L^2$,
\[
\ub^\tau(n,x)=\E[\varphi(\Xb_n^\tau) \big| \Xb_0^\tau=x].
\]
The weak error is then written as a telescoping sum
\begin{align*}
\E[\varphi(X_n)]-\E[\varphi(\Xb_n^\tau)]&=\E[\ub^\tau(0,X_n)]-\E[\ub^\tau(n,X_0)]\\
&=\sum_{k=0}^{n-1}\bigl(\E[\ub^\tau(n-k-1,X_{k+1})]-\E[\ub^\tau(n-k,X_k)]\bigr).
\end{align*}
In addition, using Markov property and a second-order Taylor expansion, one obtains
\begin{align*}
\E[\ub^\tau(n-k,X_k)]&-\E[\ub^\tau(n-k-1,X_{k+1})]\\
&=\E[\ub^\tau(n-k-1,S_{\Delta t}\bigl(X_k+\Delta t\Fb^\tau(X_k)+\Delta W_k^Q\bigr))]\\
&\hspace{3cm}-\E[\ub^\tau(n-k-1,S_{\Delta t}\bigl(X_k+\Delta t\tilde{F}_k+\Delta W_k^Q\bigr))]\\
&=\Delta t\E\bigl[D_x\ub^\tau(n-k-1,S_{\Delta t}\bigl(X_k+\Delta t\Fb^\tau(X_k)+\Delta W_k^Q\bigr)).\bigl(S_{\Delta t}\Fb^\tau(X_k)-S_{\Delta t}\tilde{F}_k\bigr)\bigr]\\
&+{\rm O}(\Delta t^2)\E|\Fb^\tau(X_k)-\tilde{F}_k|^2.
\end{align*}

Note that ${\rm O}(\Delta t^2)\E|\Fb^\tau(X_k)-\tilde{F}_k|^2$ is treated as $\mathcal{E}_1$ above.

The remaining error term is interpreted in terms of the auxiliary function
\[
\Psi(k,x,y)=-\E\bigl[D_x\ub^\tau(k,S_{\Delta t}\bigl(x+\Delta t\Fb^\tau(x)+\Delta W_0^Q\bigr).(S_{\Delta t}F(x,y))\bigr],
\]
as
\[
\frac{\Delta t}{M_a}\sum_{m=M-M_a+1}^{M}\bigl(\E[\Psi(n-k-1,X_k,Y_{k,m})]-\int\Psi(n-k-1,X_k,\cdot)d\mu^\tau\bigr).
\]
Studying regularity properties of $\Psi(n-k-1,x,\cdot)$, one then concludes using~\eqref{eq:speed}.

Finally,
\begin{align*}
\big|\E[\ub^\tau(n-k,X_k)]&-\E[\ub^\tau(n-k-1,X_{k+1})]\big|\le C\Delta tR_1(M,M_a,\tau)+C\Delta t^2\bigl(\frac{1}{M_a}+R_2(M,M_a,\tau)\bigr),
\end{align*}
and it remains to sum from $k=0$ to $k=n-1$ to conclude the proof of Proposition~\ref{propo:hmm_bis}.
\end{proof}

\section*{Ackowledgments}

The author would like to thank A.~Debussche for discussions during the preparation of this manuscript. This work was partially supported by the project BORDS (ANR-16-CE40-0027-01) operated by the French National Research Agency (ANR).


\end{document}